\newtheorem{theorem}{Theorem}[section]
\newtheorem{lemma}[theorem]{Lemma}
\newtheorem{remark}[theorem]{Remark}
\newtheorem*{defn}{Definition}
\numberwithin{equation}{section}
\newtheorem{exmp}[theorem]{Example}
\newcommand{\R}{\mathbb{R}}
\newcommand{\bxi}{\boldsymbol{\xi}}
\newcommand{\indep}{\rotatebox[origin=c]{90}{$\models$}}
\newcommand{\E}{\mathbb{E}}
\newcommand{\Prm}{\mathbb{P}}
\title{Dynamical Polynomial Chaos Expansions and Long Time Evolution of Differential Equations with Random Forcing}
\author{H. Cagan Ozen\thanks{Department of Applied Physics \& Applied Mathematics, Columbia University, New York, NY 10027, USA (\href{mailto:hco2104@columbia.edu}{hco2104@columbia.edu}, \href{gb2030@columbia.edu}{gb2030@columbia.edu}). }  \and Guillaume Bal\footnotemark[1] }
\date{}
\begin{document}
\maketitle

\begin{abstract}
Polynomial chaos expansions (PCE) allow us to propagate uncertainties in the coefficients of differential equations to the statistics of their solutions. Their main advantage is that they replace stochastic equations by systems of deterministic equations. Their main challenge is that the computational cost becomes prohibitive when the dimension of the parameters modeling the stochasticity is even moderately large. We propose a generalization of the PCE framework that allows us to keep this dimension as small as possible in favorable situations. For instance, in the setting of stochastic differential equations (SDEs) with Markov random forcing, we expect the future evolution to depend on the present solution and the future stochastic variables. We present a restart procedure that precisely allows PCE to depend only on that information. The computational difficulty then becomes the construction of orthogonal polynomials for dynamically evolving measures. We present theoretical results of convergence for our Dynamical generalized Polynomial Chaos (DgPC) method. Numerical simulations for linear and nonlinear SDEs show that it adequately captures the long-time behavior of their solutions as well as their invariant measures when the latter exist. 
\newline
\noindent
Keywords: Polynomial chaos, stochastic differential equations, uncertainty quantification,
Markov processes
\end{abstract}

\pagestyle{myheadings}
\thispagestyle{plain}
\markboth{ H.C. Ozen and G. Bal}{DgPC}

\section{Introduction}
\label{sec:Introduction}

\textit{Polynomial chaos} (PC) is a computational spectral method that is used to propagate and quantify uncertainties arising in the modeling of physical systems. The method originated from the works of Wiener~\cite{W38} and Cameron and Martin~\cite{CM47} on the decomposition of functionals in a basis of Hermite polynomials of Gaussian random variables. Ghanem and Spanos~\cite{GS91} used such an expansion to solve stochastic equations with random data. Xiu and Karniadakis~\cite{XK02,Xiu_thesis} introduced \textit{generalized polynomial chaos} (gPC) involving non-Gaussian random parameters.  Extensions to arbitrary probability measures followed in the works~\cite{SG04,WK05}; see also \cite{EMSU12,SG04} for recent theoretical developments and a convergence result for gPC expansions.

\textit{Polynomial chaos expansions} (PCE) provide an explicit expression of quantities of interests as functionals of the underlying uncertain parameters and in some situations allow us to perform uncertainty quantifications at a considerably lower computational cost than Monte Carlo methods. However, they suffer from the curse of dimensionality and thus typically work efficiently for systems involving low dimensional uncertainties. A related major drawback appears in the long-term integration of evolution equations.  In the presence of random forcing in time, the number of stochastic variables in the system increases linearly with time and thus quickly becomes overwhelming. Moreover, standard PCE utilize  orthogonal polynomials of the initial distribution and as time evolves, the dynamics deviate from the initial data substantially (e.g., due to nonlinearities)
and the solution may become poorly represented in the initial basis. This led the authors in~\cite{BM13} to legitimately question the usefulness of (standard) PCE to address long-term evolution properties of solutions such as intermittent instabilities. This paper aims to address the aforementioned drawbacks.

We propose a PC-based method that constructs evolving chaos expansions based on polynomials of projections of the time dependent solution and the random forcing. More precisely, chaos expansions at each iteration are constructed based on the knowledge of the moments of underlying distributions at a given time. In the setting of dynamics satisfying a Markov property, we crucially exploit this feature to introduce projections of the solution at prescribed time steps that allow us to "forget" about the past and as a consequence keep the dimension of the random variables fixed and independent of time. Our chaos basis is adapted to the evolving dynamics with the following consequences: (i) our expansion retains its optimality for long times; and (ii) the curse of dimensionality is mitigated. Notably, we  establish, with appropriate modifications, theoretical convergence analysis which sheds light on our numerical findings. Further, asymptotic analysis for computational complexity will also be discussed. Inspired by examples of~\cite{BM13,GHM10}, we will apply our method to a nonlinear coupled system of stochastic differential equations (SDEs) and its variants.  

Other iterative methodologies have already been proposed in~\cite{GSVK10, HS14, AGPRH12,AGPRH14} in various contexts of random differential equations (RDEs). The main novelty of our approach is that it allows us to compute long-term solutions of evolution equations with complex stochastic forcing, which here will be a Brownian motion but could be generalized to more complex models. White noise--driven evolution equations have important roles in modeling the small scale effects and certain uncertainties in many applications such as turbulence, filtering, mathematical finance,  and stochastic control~\cite{OB03,BM13,HLRZ06,GHM10}.

The plan of the paper is as follows. Section \ref{sec:PC} introduces background material and establishes the necessary notation for PCE. Our methodology, called Dynamical generalized Polynomial Chaos (DgPC), is described in detail in section \ref{sec:method}. Numerical experiments comparing DgPC to Hermite PC and Monte Carlo simulations are presented in section \ref{sec:numeric}. Some conclusions are offered in section \ref{sec:conclu}.


\section{Polynomial Chaos}  \label{sec:PC}

We briefly introduce the original PC framework.  The notation we follow can be found in~\cite{HLRZ06,Luo,BM13}. 

Given a probability space $(\Omega, \mathcal{F}, \Prm)$ where $\Omega$ is a general sample space, $\mathcal{F}$ is a $\sigma$-algebra of subsets and $\Prm$ is a probability measure, we consider $L^2(\Omega, \mathcal{F}, \Prm)$, the space of real-valued random variables with finite second moments.  Let  $\boldsymbol{\xi}= (\xi_1,\xi_2,\ldots)$ be a countable collection of independent and identically distributed (i.i.d) standard Gaussian random variables belonging to the probability space, and $ \mathcal{F}= \sigma(\boldsymbol{\xi})$. Then, we define the Wick polynomials
$
T_{\alpha}(\bxi) := \prod_{i=1}^{\infty} H_{\alpha_i}(\xi_i),
$
where $\alpha$ belongs to set of multi-indices with a finite number of nonzero components 
$
\mathcal{J} = \{ \alpha=(\alpha_1,\alpha_2,\ldots) \, | \, \alpha_j \in \mathbb{N}_0, \, |\alpha|=\sum_{i=1}^{\infty} \alpha_i < \infty \},
$
$H_{n}$ is the $n$th order normalized one-dimensional Hermite polynomial and $\mathbb{N}_0:=\mathbb{N} \cup \{0\}$. Note that the Wick polynomials are orthogonal to each other with respect to the measure induced by $\bxi$. 

The Cameron and Martin theorem~\cite{CM47} establishes that the Wick polynomials form a complete orthonormal basis in $L^2(\Omega, \mathcal{F},\Prm)$. This means that any functional $u(\cdot,\boldsymbol{\xi}) \in L^2$, can be expanded as
\begin{align} \label{eq:PCE}
u(\cdot,\boldsymbol{\xi}) = \sum_{\alpha \in \mathcal{J}} u_{\alpha}(\cdot) T_{\alpha}(\boldsymbol{\xi}), \quad  u_{\alpha}(\cdot) = \E[u(\cdot,\boldsymbol{\xi}) T_{\alpha}(\boldsymbol{\xi})],
\end{align}
and the sum converges in $L^2$. We define $\E$ as expectation with respect to $\Prm$. The expansion \eqref{eq:PCE} is called polynomial chaos expansion (PCE). We will also use the term Hermite PCE to emphasize that the expansion utilizes Gaussian random variables.    

One of the notable features of \eqref{eq:PCE} is that it separates the randomness in $u$  such that the coefficients $u_{\alpha}$ are deterministic and all statistical information is contained in the coefficients. In particular, the first two moments are given by
$
\E[u] = u_0$ and  $\E[u^2] = \sum_{\alpha \in \mathcal{J}} |u_{\alpha}|^2.
$
Higher order moments may then be computed using either triple products of Wick polynomials~\cite{Xiu_book,Xiu_thesis,LeMK10} or the Hermite product formula~\cite{MR04,Luo}; see section \ref{sec:moments}.

In numerical computations, the doubly infinite expansion \eqref{eq:PCE} is truncated so that it becomes a finite expansion
\begin{align} \label{eq:simp_trunc}
u \approx u_{K,N}(\cdot,\xi_1,\ldots,\xi_K) := \sum_{|\alpha| \leq N} u_{\alpha}(\cdot) \prod_{i=1}^K H_{\alpha_i}({\xi_i}),
\end{align}
where we simply used polynomials up to degree $N$ in the variables $(\xi_1,\xi_2\ldots,\xi_K)$. Throughout the paper, we use the graded lexicographic ordering for multi-indices; see \cite[Table 5.2]{Xiu_book}. 

 In the context of white noise--driven SDEs, the random variables $\xi_i$ can be obtained by the projection $
 \xi_i = \int_0^t m_i(s)\, dW(s),
 $
 where $W(s)$ is a Brownian motion on $[0,t]$ and $m_i$ is a complete orthonormal system in $L^2[0,t]$. Moreover, $\bxi=(\xi_i)_i$ is comprised of i.i.d. standard Gaussian random variables, and the expansion
 \begin{align} \label{eq:expansion_W} \sum_{i=1}^{\infty} \, \xi_{i} \, \int_0^s m_i(\tau) d\tau
 \end{align}
  converges in $L^2$ for all $s \leq t$ and 
 has the error bound 
 \begin{align} \label{eq:exp_W_err}
 \E \left[ W(s) - \sum_{i=1}^K  \xi_i \, \int_0^s m_i(\tau) d\tau \right]^2  = O(t K^{-1}),
 \end{align}
 provided the $m_i$'s are trigonometric polynomials; see \eqref{eq:cosines} and~\cite{Luo,HLRZ06}.  Here, Brownian motion $\{W(s), \, 0 \leq s \leq t \}$ is projected onto $L^2[0,t]$ for a fixed time $t>0$ so that the corresponding Wick polynomials $T_{\alpha}(\bxi)$ depend implicitly on $t$ . 

The simple truncation \eqref{eq:simp_trunc} leads to 
$
K+N \choose{K}$ terms in the approximation. Thus, the computational cost increases rapidly with high dimensionality, which in turn decreases the efficiency of PCE. This is the  ``curse of dimensionality''. Another related major problem of PC is that expansions may converge slowly and even fail to converge for long time evolutions~\cite{WK05, WK06,HLRZ06,BM13,MNGK04}. These problems led to numerous extensions of the Hermite PCE, which we now briefly discuss. 

The paper ~\cite{XK02} proposed a method called generalized polynomial chaos (gPC), in which the above random variables $\xi$ have specific, non-Gaussian, distributions in the Askey family~\cite{XK02,Xiu_thesis}. Further generalizations to arbitrary probability measures beyond the Askey family were proposed in~\cite{WK05}, where the probability space is decomposed into multiple elements and chaos expansions are employed in each subelement. The approach taken in~\cite{GSVK10} is to use a restart procedure, where the chaos expansion is restarted at different time-steps in order to mitigate the long-term integration issues of chaos expansions. Another generalization toward arbitrary distributions is presented in~\cite{ON12}, using only moment information of the involved distribution with a data-driven approach. 

\section{Description of the proposed method}
\label{sec:method}
Our key idea is to adapt the PCE to the dynamics of the system. Consider for concreteness the  following SDE:
\begin{align}  \label{eq:sde}
d u(s) = L(u) \, ds + \sigma \, dW(s), \quad u(0)=u_0, \quad s \in [0,t],
\end{align}
where $L(u)$ is a general function of $u$ (and possibly other deterministic or stochastic parameters), $W(s)$ is a Brownian motion, $\sigma>0$ is a constant, and $u_0$ is an initial condition with a prescribed probability density. We assume that a solution exists and is unique on $[0,t]$.

As the system evolves, a fixed polynomial chaos basis adapted to $u_0$ and $dW$ may not be optimal to represent the solution $u(t)$ for long times. Moreover, the dimension of the representation of $dW$ increases linearly with time $t$, which renders the PCE method computationally intractable even for moderate values of $t$. We will therefore introduce an increasing sequence of restart times $0<t_j<t_{j+1}<t$ and construct a new PCE basis at each $t_j$ based on the solution $u(t_j)$ and all additional random variables that need to be accounted for.  A very similar methodology with $\sigma=0$ was considered earlier in~\cite{GSVK10,HS14}.  When random forcing is present and satisfies the Markov property as in the above example, the restarting strategy allows us to "forget" past random variables that are no longer necessary and focus on a significantly smaller subset of random variables that influence the future evolution. As an example of application, we will show that the restarting strategy allows us to capture the invariant measure of the above  SDE, when such a measure exists. 

To simplify notation, we present our algorithm on the above scalar SDE with $L(u)$ a function of $u$, knowing that all results also apply to systems of SDEs with minor modifications; see sections \ref{sec:higherdim} and \ref{sec:numeric}.

\subsection{Formulation}

First, we notice that the solution $u(t)$ of \eqref{eq:sde} is a random process  depending on the initial condition and  the paths of Brownian motion up to time $t$: 
\[
u  = u(t; \, \{W_s, \, 0 \leq s \leq t \}, u_0 ).
\]
Therefore, recalling the expansion for $W_s$ \eqref{eq:expansion_W}, the solution at time $t$ can be seen as a nonlinear functional of $u_0$ and the countably infinite variables $\bxi$. As previously noticed in~\cite{GSVK10,HS14}, the solution $u(t)$ can be represented as a linear chaos expansion in terms of the polynomials in itself and therefore, for sufficiently small later times $t+\varepsilon$, $\varepsilon>0$, the solution $u(t+\varepsilon)$ can be efficiently captured by low order chaos expansions in $u(t)$ everything else being constant. Moreover, the solution $u(t+\varepsilon)$ on the interval $0<\varepsilon<\varepsilon_0$ depends on $W_{[t,t+\varepsilon_0]}$ and not on values of $W$ outside of this interval. This significantly reduces the number of random variables in $\bxi$ that need to be accounted for. This crucial observation clearly hinges upon the Markovian property of the dynamics.  Hence, dividing the time horizon into small pieces and iteratively employing PCE offer a possible way to alleviate both curse of dimensionality and long-term integration problems.

We decompose the time horizon $[0,t]$ into $n$ subintervals; namely $[0,t_1],[t_1,t_2],\ldots,[t_{n-1},t]$ where $0=t_0< t_1<\ldots<t_n=t$. The idea is then to employ polynomial chaos expansion in each subinterval and restart the approximation depending  on the distributions of both $\bxi_j$ and $u(t_j)$ at each $t_j$, $1 \leq j <n$; see Figure \ref{fig:prop_chaos}. Here, $\bxi_j$ denotes the Gaussian random variables required for Brownian forcing on the interval $[t_{j},t_{j+1}]$. 
Throughout the paper, we utilize the term $T_{\alpha}$ to represent orthonormal chaos basis involving its arguments. In order to establish evolution equations for chaos basis $T_{\alpha}(\bxi_j)$, triple products   $\E[T_{\alpha}(\bxi_j) T_{\beta}(\bxi_j) T_{\gamma}(\bxi_j)]$ are needed. This procedure basically corresponds to computing the coefficients and indices in the Hermite product formula \eqref{eq:her_prod}.  

The probability distribution of $u(t_j)$ does not belong to any classical family such as the Askey family in general. 
The construction of the orthogonal polynomials in $u(t_j)$ is therefore fairly involved computationally and is based on the general PCE results mentioned earlier in the section. At each time step, we compute the moments of $u(t_j)$ using its previously obtained chaos representation and incorporate them in a modified Gram--Schmidt method. This is a computationally expensive step. Armed with the orthogonal basis, we then compute the triple products in $u(t_j)$ to perform the necessary Galerkin projections onto spaces spanned by this orthogonal basis and thus obtain evolution equations for the deterministic expansion coefficients. Letting $u_{j}:=u(t_j)$, equations for the coefficients $(u_{j+1})_{\alpha}$ of $u_{j+1}$ are given in the general form 
 \begin{align*} 
d (u_{j+1})_{\alpha}  = \E \left[ T_{\alpha }(\bxi_j,u_j) \,  L\left(\sum_{\beta} (u_{j+1})_{\beta} \, T_{\beta}(\bxi_j,u_j) \right)  \right] ds + \sigma \, \E \left[T_{\alpha}(\bxi_j,u_j) \, d{W_s}\right], 
 \end{align*}
 where  $\alpha,\beta \in \mathcal{J}$. 
Before integration of these expansion coefficients in time, $u_j$ is represented in terms of its own orthogonal polynomials, which provides a description of the initial conditions on that interval. We then perform a high order time integration. Cumulants of the resulting solution are then computed to obtain relevant statistical information about the underlying distribution.

\begin{remark} \rm
The proposed methodology does not require the computation of any probability density function (pdf) and rather depends only on its moments at each restart time. The statistical information contained in such moments provides useful data for decision making in modeling and, for determinate distributions, moments characterize the distribution uniquely.  This aspect will be essential in the proof of convergence in section \ref{sec:Conv}.

Comparing our method with probability distribution function methods, e.g., the Fokker--Planck equation, we note that it essentially evolves the coefficients of orthogonal polynomials of the projected solution in time instead of evolving the pdf. 
\end{remark}

Let $Z_j$ represents the nonlinear chaos expansion mapping $(u_{j-1},\bxi_{j-1})$ to $u_{j}$. Our scheme may be demonstrated by Figure \ref{fig:prop_chaos}.

\begin{figure}[h]
\centering
\includegraphics[scale=1]{./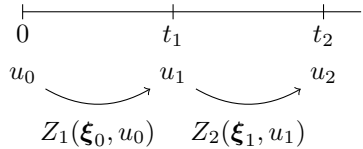}
  \caption{ Propagation  of chaos expansions.}
  \label{fig:prop_chaos}
\end{figure}
Mathematically, we have
\begin{align} \label{eq:sumgpc}
u_j = Z_j(\bxi_{j-1},u_{j-1}) = \sum_{\alpha \in \mathcal{J}} (u_j)_{\alpha} \, T_{\alpha} (\bxi_{j-1},u_{j-1}), \quad j \in \mathbb{N}.
\end{align}

In simulations, $\bxi_j$ is truncated with finite dimension $K \in \mathbb{N}$. Let also $N \in \mathbb{N}$ denote the maximum degree of the polynomials in the variables $(\bxi_j,u(t_j))$. Here, to simplify, $K$ and $N$ are chosen independently of $j$. The multi-indices having $K$ dimensions and degree $N$ belong to the set  
$
\mathcal{J}_{K,N} := \{ \alpha=(\alpha_1,\alpha_2,..,\alpha_K) \, | \, \alpha_j \in \mathbb{N}_0, \, |\alpha| \leq N \}.
$

We now present our algorithm, called \textit{Dynamical generalized Polynomial Chaos} (DgPC), in one dimension for concreteness; see also section \ref{sec:higherdim}.
\\

\noindent 
\textbf{Algorithm 1d}:  DgPC \\
\noindent
- decompose time domain $[0,t] =[t_0,t_1] \cup \ldots.\cup [t_{n-1},t]$  \\
- initialize degrees of freedom $K,N$ \\
- compute coefficients/indices in Hermite product formula for $\bxi_0$ \\
- integration in time:  \\
- \hspace{0.3cm}	time step $t_j \geq 0$ : obtain random variable $u_j$  \\
- \hspace{0.8cm} calculate moments $\E[(u_j)^{m}]$  \\
- \hspace{0.8cm} construct orthogonal polynomials $T_k(u_j)$ \\ 
- \hspace{0.8cm} compute triple products $\E[T_{k}(u_j)\, T_{l}(u_j) \,  T_{m}(u_j)]$\\
- \hspace{0.8cm} perform Galerkin projection onto  $ \mbox{span} \{ T_{\alpha}(\bxi_{j},u_j) \}$ \\
- \hspace{0.8cm} set up initial conditions for the coefficients $(u_j)_{\alpha}$ \\
- \hspace{0.8cm} evolve the expansion coefficients $(u_j)_{\alpha}$  \\
- \hspace{0.3cm} calculate cumulants \\
- postprocessing
\\

Several remarks are in order. 
First, since our stochastic forcing has identically distributed independent increments, we observe that at each subinterval the distribution of $\bxi_j$ is the same. Computing and storing (in sparse format) the coefficients for the product formula  \eqref{eq:her_prod} only once  drastically reduces the computational time. 

At each iteration, the random variable $u(t_j)=u_j$ is projected onto a finite dimensional chaos space and the next step is initialized with the projected random variable. For a $d$-dimensional SDE system, this projection leads to  ${K+N \choose N} \times {d+N \choose N}$ terms in the basis for the subinterval $[t_j,t_{j+1}]$.  Therefore, the total number of degrees of freedom used in $[0,t]$ becomes $n \times{K+N \choose N} \times {d+N \choose N}$; see also section \ref{sec:cost}.  We emphasize that small values of $K,N$ are utilized in each subinterval such that computations can be carried out quickly. 

The idea of iteratively constructing chaos expansions for arbitrary measures was considered earlier in~\cite{GSVK10,HS14,ON12, AGPRH12,AGPRH14}. In~\cite{AGPRH12,AGPRH14}, an iterative method is proposed to solve coupled multiphysics problems, where a dimension reduction technique is used to exchange information between iterations while allowing the construction of PC in terms of arbitrary (compactly supported) distributions at each iteration. A similar iterative procedure for Hermite PC was suggested for stochastic partial differential equations (SPDEs) with Brownian forcing in the conclusion section of~\cite{HLRZ06}  without any mathematical construction or numerical example. We also stress an important difference between our approach and~\cite{GSVK10}: our scheme solely depends on the statistical information, i.e., moments, which are available through chaos expansion,  whereas~\cite{GSVK10} either requires the pdf of $u_j$ at $t_j$ or uses  mappings to transform $u_j$ back to the original input random variables, which in our problem is high dimensional, including the $\xi$ variables.

\subsection{Implementation} \label{sec:Imp}

We now describe the implementation of our algorithm.

\subsubsection{Moments} \label{sec:moments}

Provided that the distribution of the initial condition $u_0$ is known, several methods allow us to compute moments: analytic formulas, quadrature methods, or Monte Carlo sampling. Also, in the case of limited data, moments can be generated from raw data sets. Throughout the paper, we assume that the moments of the initial condition can be computed  to some large finite order so that the algorithm can be initialized. 

We first provide the Hermite product formula, which establishes the multiplication of two PCE \eqref{eq:PCE} for random variables $u$ and $v$. The chaos expansion for the product becomes
\begin{align} \label{eq:her_prod}
uv  = \sum_{\alpha \in \mathcal{J}}  \sum_{\gamma \in \mathcal{J}} \!  \sum_{0 \leq \beta \leq \alpha} \! C(\alpha,\beta,\gamma)  u_{\alpha-\beta+\gamma}  v_{\beta+\gamma}    \, T_{\alpha}(\bxi), 
\end{align}
with
$ C(\alpha,\beta,\gamma)  = [{\alpha \choose \beta} {\beta+\gamma \choose \gamma } {\alpha-\beta+\gamma \choose \gamma} ]^{1/2},
$~\cite{Luo,HLRZ06}. As our applications include nonlinear terms, this formula will be used repeatedly to multiply Hermite chaos expansions. 

To compute moments of the random variable $u_j$ (recall \eqref{eq:sumgpc}) at time step $t_j$, $j=1,\ldots,n$, we use its chaos representation and previously computed triple products recursively as follows.  Due to the Markov property of the solution, the normalized chaos basis in $\bxi_{j-1} $ and $u_{j-1}$ becomes a tensor product, and thus we can write
\begin{align*}
u_j = \sum_{\alpha'} (u_j)_{\alpha'} \, T_{\alpha'} (\bxi_{j-1},u_{j-1}) = \sum_{\alpha,k} (u_j)_{\alpha,k} \, T_{\alpha}(\bxi_{j-1}) \, T_k(u_{j-1}),
\end{align*}
where $\alpha,\alpha' \in \mathcal{J}$ and $k \in \mathbb{N}_0$. Then for an integer $m>1$,
\begin{align*}
u_j^m & = \sum_{\alpha,k } (u_j)_{\alpha,k} \, T_{\alpha}(\bxi_{j-1}) \, T_k(u_{j-1})  \, \sum_{\beta,l}(u^{m-1}_j)_{\beta,l} \, T_{\beta} (\bxi_{j-1}) \, T_l(u_{j-1}). 
\end{align*}
Here, $\alpha,\beta \in  \mathcal{J}$ and $k,l \in \mathbb{N}_0$. Hence, using \eqref{eq:her_prod}, the coefficients $(u_j^m)_{\gamma,r}$, where $\gamma \in \mathcal{J}$  and $r \in \mathbb{N}_0$, can be calculated using the expression
\begin{align}\label{eq:gendec}
(u_j^m)_{\gamma,r}  = \sum_{\alpha \in \mathcal{J}} \sum_{ 0 \leq \theta \leq \gamma } \sum_{k,l} \, C(\gamma,\theta,\alpha)  \, \E[T_{r}(u_{j-1})T_{k}(u_{j-1})T_{l} (u_{j-1})] \, (u_j^{m-1})_{\gamma-\theta+\alpha,l}\,(u_j)_{\theta+\alpha,k},
\end{align}
which allows us to compute moments by simply taking the first coefficient, i.e. $\E[u_j^m] =  (u_j^m)_{\boldsymbol{0},0}$. Even though the coefficients $C(\gamma,\theta,\alpha)$  can be computed offline,  the triple products  $\E[T_{r}(u_{j})T_{k}(u_{j})T_{l} (u_{j})] $ must be computed at every step as the distribution of $u_{j}$ evolves. 

\subsubsection{Orthogonal Polynomials}

Given the real random variable $u_j$ obtained by the orthogonal projection of the solution $u$ onto homogeneous chaos at $t_j$, the Gram--Schmidt orthogonalization procedure can be used to construct the associated orthogonal polynomials of its continuous distribution. For theoretical reasons, we assume that the moment problem for $u_j$ is uniquely solvable; i.e., the distribution is nondegenerate and uniquely determined by its moments~\cite{akhiezer,gautschi}. 

To construct orthogonal polynomials, the classical Gram--Schmidt procedure uses the following recursive relation
\begin{align*} 
T_0= 1 , \quad  
T_m(u_j) = u_j^m - \sum_{l=0}^{m-1} a_l^m  \,T_l(u_j), \quad m \geq 1,
\end{align*}
where the coefficients are given by $a_l^m =\mathbb{E}[ u_j^m \, T_l(u_j)] \left(\E[T^2_l(u_j)]\right)^{-1}$. Note that $\E$ denotes the Lebesgue--Stieltjes integral with respect to distribution of $u_j$. We observe that the coefficient $a_l^m$  requires moments up to degree $2m-1$ as $T_{l}$ is at most of degree $m-1$. Thus, normalization would need first $2m$ moments for orthonormal polynomials up to degree $m$. 

After the construction of orthonormal polynomials of $u_j$, the basis $T_{\alpha}(\bxi_{j},u_j)$ can be generated by tensor products since $\bxi_{j}$ and $u_j$ are independent. Moreover, triple products
$
\E[T_{k}(u_j)\, T_{l}(u_j) \,  T_{m}(u_j)]$, where $k,l,m=0,\ldots,N,
$
can be found by simply noting that this expectation is a triple sum involving moments up to order $3N$.  Hence, the knowledge of moments yields not only the orthonormal set of polynomials but also the triple products required at each iterative step.

In our numerical computations, we prefer to utilize a modified Gram--Schmidt algorithm as the classical approach is overly sensitive to roundoff errors in cases of  high dimensionality~\cite{GL96}. We refer the reader to~\cite{gautschi} for other algorithms, e.g., Stieltjes and modified Chebyshev methods.

\subsubsection{Initial conditions}

When the algorithm is reinitialized at the restart point $t_j$, it needs to construct the initial condition in terms of chaos variables for the next iteration on $[t_{j},t_{j+1}]$. In other words, we need to find the coefficients $ (u_j)_{\alpha}$ in terms of $u_j$ and $\bxi_{j}$. To this end, we observe that the first two polynomials in $u_j$ are given by
$
T_0(u_j) =1$ and  $ T_1(u_j) = \sigma_{u_j}^{-1}(u_j - \E[u_j]),
$
where $\sigma_{u_j}$ is the standard deviation of the random variable $u_j$. Notice that $\sigma_{u_j}>0$, because we assumed in the preceding section that the distribution was nondegenerate.  Hence, the initial condition can be easily found by the relation
\begin{align*}
u_j = \E[u_j] T_0(u_j) + \sigma_{u_j} T_1(u_j) = \sum_{\alpha \in \mathcal{J}, \, k \in \mathbb{N}_0} (u_j)_{\alpha,k} \, T_{\alpha}(\bxi_{j}) \,T_k(u_{j}).
\end{align*}

\subsubsection{Extension to higher dimensions} \label{sec:higherdim}

We now extend our algorithm to higher dimensions, and for concreteness, we consider two dimensions. Let $u_{j+1}=(v_1,v_2) \in \R^2$ be a two-dimensional random variable obtained by projection of the solution $u$ onto homogeneous chaos space at time $t_{j+1}$. 

The two-dimensional case requires the calculation of the mixed moments $\E[v_1^l v_2^m], l,m \in \mathbb{N} \cup \{0 \}$. In the case of independent components $v_1,v_2$, the moments become products of marginal moments and the basis $T_{\alpha}(v_1,v_2)$ is obtained by tensor products of the one-dimensional bases. For a nonlinear system of equations, the components of the solution typically do not remain independent as time evolves. Therefore, we need to  extend the procedure to correlated components. 

Denoting by $Z_{j+1,1}$ and $Z_{j+1,2}$ the corresponding chaos expansions on  $[t_{j},t_{j+1}]$ so that $v_1= Z_{j+1,1}(\bxi_{j},u_{j})$ and $v_2= Z_{j+1,2}(\bxi_{j},u_{j})$, we compute the mixed moments by the change of variables formula
\begin{align} \label{eq:mixedmom}
 \E[v_1^l \, v_2^m]=  \E_{(v_1,v_2)}[v^l_1 \, v^m_2] =  \E_{(\bxi_{j},u_{j})} \left[Z_{j+1,1}^l \, Z_{j+1,2}^m \right],
\end{align}
where $ \E_{(\cdot)}$ represents the expectation with respect to the joint distribution induced by the subscript. Note that the latter integral in \eqref{eq:mixedmom} can be computed with the help of previously computed triple products of $\bxi_j$ and  $u_{j}$. Incidentally, similar transformation methods were used previously  in~\cite{HS14,AGPRH12,AGPRH14}.

Since the components may not be independent, the tensor product structure is lost but the construction of orthogonal polynomials is still possible. Based on the knowledge of marginal moments, we first compute the tensor product $T_{\alpha}(v_1,v_2)$. However, this set is not orthogonal with respect to the joint probability measure of $(v_1,v_2)$. Therefore, we further orthogonalize it via a Gram--Schmidt method. Note that this procedure requires only mixed moments $E_{(v_1,v_2)}[v^l_1 v^m_2]$, which have already been computed in the previous step of the algorithm. It is worth noting that the resulting set of orthogonal polynomials is not unique as it depends on the ordering of monomials~\cite{Xu97}. In applications, we consider the same ordering used for the set of multi-indices. Finally, calculation of triple products and initial conditions can be extended in an obvious way. 

\subsection{Computational Complexity} \label{sec:cost}

In this section, we discuss the computational costs of our algorithm using a vector-valued version of the SDE \eqref{eq:sde} in $\R^d$ for a fixed $d \in \mathbb{N}$. For each scalar SDE, we assume that the nonlinearity is proportional to $m${th} order monomial with $m \in \mathbb{N}$. Furthermore, we compare costs of DgPC and Hermite PC in the case where both methods attain a similar order of error.

Throughout this section, $K,N$ will denote the dimension and the degree in  $\bxi= (\xi_1, \ldots, \xi_K)$ for Hermite PC. Here $\bxi$ represents the truncation of $d$-dimensional Brownian motion; therefore, $K\gg d$.  The time interval is fixed and given by $[0,1]$.  We also let  $M(K,N):=$ $ K+N \choose K$ be the degrees of freedom for the simple truncation \eqref{eq:simp_trunc} so that the dimension of  truncated multi-indices $\mathcal{J}_{K,N}$ becomes $M$.
For the DgPC method,  we divide the interval into $n>1$ identical subintervals so that $\Delta t = n^{-1}$. We now slightly change the notation of previous sections and denote by $K_*, N_*$ the dimension and degree of polynomials of $\bxi_j=(\xi_{jK_*+1},\ldots,\xi_{jK_*+K_*})$  used in DgPC approximation for each $1 \leq j <n$. With additional $d$ variables at each restart, the dimension of the multi-index set for DgPC becomes $M(K_*,N_*) M(d,N_*)$ due to the tensor product structure. Further, let $h <1$ and $\zeta \geq 1$ denote the time step and global order of convergence for the time integration method employed in Hermite PC, respectively.

\subsubsection{Computational costs}

 With the above notation, we summarize the computational costs in Table \ref{table:compcost}.  All terms should be understood in big $O$ notation.

 \begin{table}[!htb]
\centering
\renewcommand{\tabcolsep}{0.15cm}
\renewcommand{\arraystretch}{1.1}
 \begin{tabular} { r | p{6.5cm}|p{3.9cm}}
         Flops & {DgPC} & Hermite PC \\ \hline
    Offline &  $K_{*} \times M(K_*,N_*)^3$  & $K \times M(K,N)^3$\\
    Time evolution &  $d \times h^{-1} \times n^{1/ \zeta} \times (M(K_*,N_*)M(d,N_*)) \times   [1 +(m-1) \times (M(K_*,N_*)M(d,N_*))^2]$ & $ d \times h^{-1} \times M(K,N) \times  [ 1+  (m-1) \times M(K,N)^2]$\\
    Moments &  $ n  \times (M(K_*,N_*)M(d,N_*))^3 \times [ (6d-3d^2)\times N_* + (d-1) \times M(d,3N_*)]$ & \\
	Gram--Schmidt & $ n \times M(d,N_*)^3$ & \\
	Triple products & $ n \times M(d,N_*)^6$ & \\
	Initials & $ n \times d \times (M(K_*,N_*) M(d,N_*))$&  \\
  \end{tabular} 
  \caption{Total computational costs.}
   \label{table:compcost}
\end{table}

The estimates in Table \ref{table:compcost} are obtained as follows.
Triple products for $\bxi$ with $\alpha,\beta,\gamma \in \mathcal{J}_{K,N}$ can be calculated by the equation
\begin{align*} \E[T_{\alpha}(\bxi)T_{\beta}(\bxi)T_{\gamma}(\bxi)]& = \E\left[ \prod_{i=1}^K H_{\alpha_i} (\xi_i) \prod_{j=1}^K H_{\beta_j} (\xi_j) \prod_{k=1}^K H_{\gamma_k} (\xi_k) \right]= \prod_{i=1}^K
 \E[H_{\alpha_i}(\xi_i)H_{\beta_i}(\xi_i)H_{\gamma_i}(\xi_i)].
\end{align*}
Thus, the offline cost is of order $K \times M(K,N)^3 $ assuming that one-dimensional triple products can be read from a precomputed table. 

Since  a time discretization with error $O(h^{\zeta})$ is employed for Hermite PC, then for DgPC in each subinterval,  time steps of order $h^{-1} n^{(1-\zeta)/ \zeta}$ should be used to attain the same global error.  Without nonlinearity, the total cost in Hermite PC for evolution of a $d$-dimensional system becomes $d \times h^{-1} \times M(K,N)$. Due to the functional $L(u) \propto u^m, m>1$, the cost should also include the computation of $u^m$ at each time integration step. This requires additional $d \times h^{-1} \times (m-1) \times M(K,N)^3$  work (see the computation of moments below). Hence, the corresponding total cost of evolution becomes the sum of these costs.

The coefficients of the $k${th} power of a single projected variable $u_j$ are given by
\[
(u_j^k)_{\gamma} = \sum_{\alpha,\beta} (u_j^{k-1})_{\alpha} \, (u_j)_{\beta} \, \E[T_{\alpha}T_{\beta}T_{\gamma}] , \quad 2 \leq k \leq 3N_*, 
\]
assuming that, after each multiplication, the variable is projected onto the PC basis. Here, $\alpha,\beta,\gamma \in \mathcal{J}_{K_*,N_*} \otimes \mathcal{J}_{d,N_*}$. Thus, the computation of marginal moments of all variables requires $d \times 3N_{*} \times (M(K_{*},N_*) M(d,N_*))^3 $ calculations at each restart time in DgPC. Since we use the same ordering for multidimensional moments as $\mathcal{J}_{d,3N_*}$, computing joint moments further needs $(d-1) \times (M(d,3N_*)-3dN_*) \times (M(K_{*},N_*)M(d,N_*) ) ^3$ amount of work. 

The Gram--Schmidt procedure costs are cubic in the size of the Gram matrix, which has dimension $M(d,N_*)$ in DgPC.  Each triple product $\E [T_{\alpha}(u_j) T_{\beta}(u_j) T_{\gamma}(u_j)], \alpha,\beta,\gamma \in \mathcal{J}_{d,N_*}$, is a triple sum; therefore, the total cost is proportional to $M(d,N_*)^6$. Also, initial conditions can be computed by orthogonal projection onto the DgPC basis, which costs $M(K_*,N_*) M(d,N_*)$ per dimension. 

Note that although we employed simple truncation \eqref{eq:simp_trunc} for multi-indices, sparse truncation techniques can be utilized to further reduce the costs of both computing moments and evolution; see \cite{Luo,HLRZ06, LeMK10}.

\subsubsection{Error bounds and cost comparison} 
 
 We now discuss the error terms in both algorithms.  We posit that the  Hermite PCE converges algebraically in $N$ and $K$ and that the error at time $t$ satisfies 
\begin{align}\label{eq:pce_err}
|| u - u_{pce} ||_{L^2} \lesssim  t^{\delta} ( N^{-\eta}  + K^{-\lambda}),
\end{align}
for some constants $\eta,\lambda >0$ and  $\delta >1$ depending on the SDE. Here $A \lesssim B$ denotes that $A \leq c B $ for $c>0$ with $A,B  \geq 0 $.  This assumption enforces an increase in the degrees of freedom $N,K$ if one wants to maintain the accuracy in the long term. Algebraic convergence in $K$ and the term $t^{\delta}$ stems from the error bound \eqref{eq:exp_W_err}. We do not show errors resulting from time discretization in \eqref{eq:pce_err} since we already fixed those errors to the same order in both methods. Incidentally, even though we assumed algebraic convergence in $N$, exponential numerical convergence, depending on the regularity of the solution in $\bxi$, is usually observed in the literature \cite{Xiu_thesis,Xiu_book,GS91,Luo,XK02,WK06}.  

Although it is usually hard to quantify the constants $\eta,\lambda,\delta$, this may be done for simple cases. For instance, for the Ornstein--Uhlenbeck (OU) process \eqref{eq:OU}, using the exact solution and Fourier basis of cosines,  we can obtain the parameters  $\delta=2$ and $\lambda=3/2$ for short times. The parameter $\eta$ does not play a role in this case since the SDE stays Gaussian and hence we take $N=1$.  We  also note that for complex dynamics, $\eta$ may depend on $N$; see \cite{Luo,WK06} in the case of the stochastic Burgers equation and  advection equations.

Now, we fix the error terms in \eqref{eq:pce_err} to the same order $O(\varepsilon)$, $\varepsilon>0$, by choosing $N = O(K^{\lambda/\eta})$. Since DgPC uses truncated $\bxi$ of dimension $K_*$ and polynomials of degree $N_*$ in each subinterval of size $\Delta t =n^{-1}$, the $L^2$ error at $t=1$ has the form
\begin{align} \label{eq:dopc_err}
|| u - u_{dgpc} ||_{L^2} \lesssim  \frac{n}{n^{\delta}}  \left( N_*^{-{\eta}}  + K_*^{-\lambda} \right).  
\end{align}
Thus, to maintain the same level $\varepsilon$ of accuracy, we can choose 
\[n^{1-\delta} K_*^{-\lambda}  \cong K^{-\lambda} \quad \mbox{and} \quad  n^{1-\delta} N_*^{-\eta}  \cong K^{-\lambda}. \]
From Table \ref{table:compcost}, we observe that to minimize costs for DgPC, we can take $K_*=d$ and $N_*=1$ and maximize $n$ according to the previous equation as 
\begin{align} \label{eq:dogpc_n}
 n  \cong K^{\frac{\lambda}{\delta-1}}.
\end{align}
 With these choices of parameters, the total computational cost for DgPC is of algebraic order in $K$ and in general, is dominated by the computation of moments.
 
For Hermite PC with large $K$ and $N=O(K^{\lambda/\eta})$ (or $N=O(\lambda/\eta \log K)$ in the case of exponential convergence in $N$), both offline and evolution stages include the cubic term $M(K,N)^3$. Both costs increase exponentially in $K$. Therefore, the asymptotic analysis suggests that DgPC can be performed with substantially lower computational costs using frequent restarts (equation \eqref{eq:dogpc_n}) in nonlinear systems driven by high dimensional random forcing.

\subsection{Convergence results} \label{sec:Conv}

We now consider the convergence properties of our scheme as degrees of freedom tend to infinity. 

\subsubsection{Moment problem and density of polynomials}

There is an extensive literature on the moment problem for probability distributions; see~\cite{akhiezer,BC81,P82,freud} and their references. We provide the relevant background in the analysis of DgPC.

\begin{defn}\emph{(Hamburger moment problem)} For a probability measure $\mu$ on $(\R,\mathcal{B}(\R))$, the moment problem is uniquely solvable provided moments of all orders 
$
 \int_{\R} x^k \mu(dx)$, $ k \in \mathbb{N}_0,
$  
exist and they uniquely determine the measure. In this case, the distribution $\mu$ is called \textit{determinate}. 
\end{defn}

There are several sufficient conditions for a one-dimensional distribution to be determinate in Hamburger sense: these are compact support, exponential integrability and Carleman's moment condition~\cite{akhiezer,BC81}. For instance, Gaussian and uniform distributions are determinate, whereas the lognormal distribution is not. The moment problem is intrinsically related to the density of associated orthogonal polynomials. Indeed, if the cumulative distribution $\mathbb{F}_u$ of a random variable $u$ is determinate, then the corresponding orthogonal polynomials constitute a dense set in $L^2(\R,\mathbb{B}({\R}),\mathbb{F}_u)$ and therefore also in $L^2(\Omega,\sigma(u),\Prm)$~\cite{akhiezer,BC81,freud}. Additionally, finite dimensional distributions on $\R^d$ with compact support are determinate; see~\cite{P82}.

Now, let $\boldsymbol{\zeta}=(\zeta_i)_{i \in \mathbb{N}}$ denote an independent collection of general random variables, where each $\zeta_i$ (not necessarily identically distributed) has finite moments of all orders and  its cumulative distribution function is continuous. Under these assumptions,~\cite{EMSU12} proves the following theorem. 

\begin{theorem} \label{thm:conv_ernst} For any random variable $\eta \in L^2(\Omega, \sigma(\boldsymbol{\zeta}),\Prm)$, gPC  converges to $\eta$ in $L^2$ if and only if the moment problem for each $\zeta_i$,
$i \in \mathbb{
N}$, is uniquely solvable. 
\end{theorem}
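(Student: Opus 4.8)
The plan is to prove both directions of the equivalence, with the ``if'' direction being the substantive one and the ``only if'' direction being essentially a reduction to a single coordinate. For the forward (``if'') direction, I would argue as follows. Suppose the moment problem for each $\zeta_i$ is uniquely solvable. For a single random variable $\zeta_i$, determinacy implies, by the classical theory cited in the excerpt~\cite{akhiezer,BC81,freud}, that the orthogonal polynomials of $\zeta_i$ are dense in $L^2(\R,\mathcal{B}(\R),\mathbb{F}_{\zeta_i})$, hence the polynomials in $\zeta_i$ are dense in $L^2(\Omega,\sigma(\zeta_i),\Prm)$. The goal is to upgrade this one-dimensional density to density of the multivariate polynomials in $\boldsymbol{\zeta}$ inside $L^2(\Omega,\sigma(\boldsymbol{\zeta}),\Prm)$. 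Because the $\zeta_i$ are independent, $L^2(\Omega,\sigma(\boldsymbol{\zeta}),\Prm)$ is the (Hilbert space) tensor product $\bigotimes_i L^2(\Omega,\sigma(\zeta_i),\Prm)$, and tensor products of dense subspaces are dense in the tensor product. Concretely, I would first approximate an arbitrary $\eta$ by a function measurable with respect to finitely many coordinates $\zeta_1,\dots,\zeta_m$ (using that $\sigma(\boldsymbol{\zeta})$ is generated by the cylinder algebra and that conditional expectations onto $\sigma(\zeta_1,\dots,\zeta_m)$ converge to $\eta$ in $L^2$ by the martingale convergence theorem), then approximate that finite-dimensional $L^2$ function by polynomials in $\zeta_1,\dots,\zeta_m$, exploiting independence to handle one variable at a time in a telescoping fashion. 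This yields $L^2$ convergence of the gPC expansion.

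For the reverse (``only if'') direction, I would argue by contraposition: if the moment problem for some $\zeta_{i_0}$ is not uniquely solvable, then gPC fails to converge for at least one $\eta \in L^2$. The natural candidate obstruction is the following: when $\mathbb{F}_{\zeta_{i_0}}$ is indeterminate, the polynomials in $\zeta_{i_0}$ are \emph{not} dense in $L^2(\R,\mathcal{B}(\R),\mathbb{F}_{\zeta_{i_0}})$ — this is again part of the classical moment-problem/orthogonal-polynomial correspondence. Hence there exists $g \in L^2$ of the single variable $\zeta_{i_0}$ that is not in the closure of the polynomial span; viewing $g(\zeta_{i_0})$ as an element of $L^2(\Omega,\sigma(\boldsymbol{\zeta}),\Prm)$, its gPC partial sums (which are polynomials in the $\zeta_i$'s) cannot converge to it in $L^2$, since the orthogonal projection onto $\sigma(\zeta_{i_0})$ is continuous and would force convergence of the one-dimensional polynomial approximations, a contradiction. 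One technical point to be careful about: the gPC partial sums are polynomials in possibly many variables, so I would need that projecting a multivariate polynomial in independent variables onto $\sigma(\zeta_{i_0})$ yields a univariate polynomial in $\zeta_{i_0}$ — this follows from independence and the fact that $\E[\zeta_i^k]<\infty$ for all $k$, so that $\E[\,\cdot\mid \zeta_{i_0}]$ of a monomial is again a monomial in $\zeta_{i_0}$ times a constant.

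The main obstacle I anticipate is making the tensor-product/telescoping approximation argument in the ``if'' direction fully rigorous when infinitely many coordinates are involved and the individual $L^2$ spaces are infinite-dimensional. The reduction to finitely many coordinates via martingale convergence is clean, but one must verify that a function in $L^2$ of $(\zeta_1,\dots,\zeta_m)$ — which need not be a product of one-dimensional functions — can be approximated by linear combinations of product polynomials. The standard route is: approximate first by a finite linear combination of product functions $\prod_{k=1}^m f_k(\zeta_k)$ (possible because such products span a dense subspace of the tensor product $L^2$), then approximate each $f_k$ by a polynomial in $\zeta_k$ using one-dimensional density, and control the resulting error by a uniform bound plus independence (so the $L^2$ norm of the product error factorizes). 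Keeping track of these errors and of the fact that the one-dimensional polynomial approximants can be taken with $L^2$ norms bounded in terms of $\|f_k\|$ is the part that requires genuine care; everything else is bookkeeping. I should also note that since this theorem is quoted from~\cite{EMSU12}, I would present this as a sketch and refer to that reference for the full details, emphasizing only the determinacy $\Leftrightarrow$ polynomial density mechanism that is the conceptual heart of the statement.
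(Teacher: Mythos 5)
The paper does not actually prove Theorem~\ref{thm:conv_ernst}: it is imported verbatim from~\cite{EMSU12}, and your overall strategy (one-dimensional determinacy $\Rightarrow$ density of polynomials in $L^2(\mathbb{F}_{\zeta_i})$, reduction to finitely many coordinates by martingale convergence, tensorization via independence for the ``if'' direction; contraposition plus conditional expectation onto $\sigma(\zeta_{i_0})$ for the ``only if'' direction) is essentially the mechanism of that reference, so in spirit your sketch is the right reconstruction. The ``if'' direction and the projection argument in the converse (conditional expectation is an $L^2$ contraction, and by independence and finiteness of moments it maps multivariate polynomials to univariate polynomials in $\zeta_{i_0}$) are sound, and you correctly identified the telescoping/product-approximation bookkeeping as the place requiring care.

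There is, however, one genuine gap in your ``only if'' direction: the assertion that indeterminacy of $\mathbb{F}_{\zeta_{i_0}}$ implies that polynomials in $\zeta_{i_0}$ fail to be dense in $L^2(\R,\mathcal{B}(\R),\mathbb{F}_{\zeta_{i_0}})$ is false as a general statement. For an indeterminate moment problem, density of the polynomials in $L^2$ holds precisely for the so-called N-extremal (von Neumann extremal) solutions, which do exist; see~\cite{BC81}. What saves the argument is that N-extremal measures are purely discrete, so they are excluded by the standing hypothesis --- stated just before the theorem --- that each $\zeta_i$ has a \emph{continuous} cumulative distribution function. Your sketch never invokes this hypothesis, yet it is exactly what makes ``indeterminate $\Rightarrow$ polynomials not dense'' valid here; without it the equivalence claimed in the theorem can fail. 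So the fix is local: in the contrapositive step, argue that an indeterminate $\mathbb{F}_{\zeta_{i_0}}$ with continuous distribution function cannot be N-extremal, hence the polynomial span is a proper closed subspace of $L^2(\mathbb{F}_{\zeta_{i_0}})$, and then your projection argument goes through as written.
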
 

This result generalizes the convergence of Hermite PCE to general random variables whose laws are determinate. Notably, to prove $L^2$ convergence of chaos expansions, it is enough to check one of the determinacy conditions mentioned above for each one-dimensional $\zeta_i$.  

We now consider the relation between determinacy and distributions of SDEs. Consider the following diffusion
\begin{align} \label{eq:diff}
du_s  = b(u_s) \, ds + \sigma(u_s) \, dW_s, \quad u(0)=u_0,  \quad s \in [0,t],
\end{align}
where $W_s$ is $d$-dimensional Brownian motion, $u_0 \in \R^d$, and $b: \R^d \rightarrow \R^d, \sigma: \R^d \rightarrow \R^{d \times d}$ are globally Lipschitz and satisfy the usual linear growth bound. These conditions imply that the SDE has a unique strong solution with continuous paths~\cite{OB03}. Determinancy of the distribution of $u_s$ is established by the following theorem; see~\cite{P82,SJ02,FL14} for details. 

\begin{theorem} \label{thm:det_sde}
The law of the  solution of  \eqref{eq:diff} is determinate if $\sup_x || \sigma \sigma^T (x) ||$ is finite.
\end{theorem}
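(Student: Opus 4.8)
The plan is to reduce the statement to an a priori tail estimate for $\|u_s\|$. Recall that a probability measure on $\R^d$ possessing a finite exponential moment, $\int_{\R^d} e^{c\|x\|}\,\mu(dx)<\infty$ for some $c>0$, is determinate: such a bound forces the characteristic function $z\mapsto \E[e^{i\langle z,u_s\rangle}]$ to extend analytically to a complex neighborhood of the origin, and an analytic characteristic function is determined by its Taylor coefficients, i.e.\ by the moments; see~\cite{P82,freud} for the multidimensional moment problem. Equivalently, a finite exponential moment makes each one–dimensional marginal $u_s^{(i)}$ satisfy Carleman's condition, so that each is determinate, which is precisely what the tensor-product construction in section~\ref{sec:Conv} requires. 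Hence it suffices to prove that, for each $s\in[0,t]$, the law of $u_s$ has a finite exponential moment, and this is where the hypothesis $C:=\sup_x \|\sigma\sigma^T(x)\|<\infty$ is used.

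I would decompose $u_s = u_0 + \int_0^s b(u_r)\,dr + M_s$, with $M_s:=\int_0^s \sigma(u_r)\,dW_r$ a continuous local martingale. First, control the martingale part: its $i$th component has quadratic variation $\langle M^{(i)}\rangle_s=\int_0^s (\sigma\sigma^T)_{ii}(u_r)\,dr \le Cs$, uniformly in $\omega$. By the exponential martingale inequality (equivalently, by time-changing each component to a Brownian motion evaluated at a time bounded by $Cs$), $\sup_{r\le s}\|M_r\|$ has a Gaussian tail, $\Prm\bigl(\sup_{r\le s}\|M_r\|>\lambda\bigr)\le c_1 e^{-c_2\lambda^2}$ with $c_2$ depending only on $d,C,s$. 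Next, the global Lipschitz hypothesis gives the linear growth bound $\|b(u_r)\|\le \|b(0)\|+L\|u_r\|$, so, writing $U_s:=\sup_{r\le s}\|u_r\|$, one has the pathwise inequality $U_s \le \|u_0\| + s\|b(0)\| + L\int_0^s U_r\,dr + \sup_{r\le s}\|M_r\|$. Gr\"onwall's lemma then yields $U_s \le e^{Ls}\bigl(\|u_0\| + s\|b(0)\| + \sup_{r\le s}\|M_r\|\bigr)$, so $\|u_s\|\le U_s$ inherits a Gaussian tail. In particular $\E[e^{c\|u_s\|}]<\infty$ for every $c>0$ (indeed $\E[e^{a\|u_s\|^2}]<\infty$ for small $a>0$) and all moments are finite, which completes the argument by the first paragraph.

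I expect the main obstacle to be purely technical: making the exponential martingale estimate rigorous for a vector-valued stochastic integral when only the matrix quadratic variation is controlled (handled componentwise, or via Dambis--Dubins--Schwarz with a time bounded by $Cs$), and tracking the Gr\"onwall step carefully so that the linearly growing drift does not degrade the Gaussian tail into something integrable only against slower exponentials. Neither point is deep; the essential content is simply that boundedness of $\sigma\sigma^T$ caps the running quadratic variation, after which determinacy is a consequence of standard moment-problem theory.
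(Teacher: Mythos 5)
The paper offers no proof of Theorem~\ref{thm:det_sde}: the result is simply quoted from the literature (\cite{P82,SJ02,FL14}). Your argument is correct and is essentially the standard proof underlying those citations: the bound on $\sigma\sigma^T$ caps the quadratic variation of the martingale part, the exponential martingale (or Dambis--Dubins--Schwarz) estimate gives it Gaussian tails, the Lipschitz drift is absorbed by a pathwise Gr\"onwall inequality (harmless here since $u_0$ in \eqref{eq:diff} is a deterministic point), and the resulting exponential integrability of $\|u_s\|$ yields determinacy of each one-dimensional marginal via Carleman's condition and then of the joint law via Petersen's theorem \cite{P82}.
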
 

\subsubsection{$L^2$ convergence with a finite number of restarts}

Consider the solution of the SDE \eqref{eq:diff} written as
\begin{align} \label{eq:markov_proc}
u_s = F_s(u_0, \bxi_0^s),
\end{align}
where $F_s: \R^d \times \R^{\infty} \rightarrow \R^d$ is the exact evolution operator mapping the initial condition $u_0$ and $\bxi_0^s=(\xi^s_1,\xi^s_2,\ldots)$ to the solution $u_s$ at time $s>0$. Here, with a slight change of notation, $\bxi_0^s$ represents Brownian motion on the interval $[0,s]$. Note that future solutions $u_{s+\tau}$, $\tau >0$, are obtained by $F_s(u_{\tau},\bxi_{\tau}^{s+\tau})$, where $\bxi_{\tau}^{s+\tau}$ denotes Brownian motion on the interval $[\tau, \tau+s]$. 

Even though the distribution of the exact solution $u_s$ is determinate under the hypotheses of Thm. \ref{thm:det_sde}, it is not clear that this feature still holds for the projected random variables. To address this issue, we introduce, for $R \in \R_{+}$, a truncation function $\chi_R \in C_c^{\infty}(\R^d \to \R^d)$
\[
\chi_R(u) := \begin{cases} u, & \mbox{ when } u \in A_R ,  \\
0, & \mbox{ when }  u \in \R^d \setminus A_{3R} ,
\end{cases}
\]
where $A_R:= \prod_{i=1}^d [-R,R] \subset \R^d$, and such that $\chi_R(u)$ decays smoothly and sufficiently slowly on $A_R^c$ such that the Lipschitz constant ${\rm Lip}(\chi_R)$ of $\chi_R$  equals $1$; i.e., for $|\cdot|$, the Euclidean distance in $\R^d$, $|\chi_R(u)-\chi_R(v)|\leq|u-v|$. Such a function is seen to exist by appropriate mollification for each coordinate using the continuous, piecewise smooth, and odd function $\chi(u_1)$ defined by $R-|u_1-R|$ on $[0,2R]$, and extended by $0$ outside $[-2R,2R]$. This truncation is a theoretical tool that allows us to ensure that all distributions with support properly restricted on compact intervals remain determinate; see~\cite{P82}.

We can now introduce the following approximate solution operators for $R \in \R_{+}$ and $M \in \mathbb{N}_0$: 
\begin{align}
\label{eq:chainR}
F_s^R (u,\bxi_0^s) &: = \chi_R \circ F_s (u,\bxi_0^s) : \R^d \times \R^{\infty} \to A_{3R}, \\
\label{eq:chainNR}
F_s^{M,R}(u,\bxi_0^s) &: = P^M \circ F_s^R(u,\bxi_0^s) = \sum_{ i=0}^{M-1} F^R_{i,s} \, T_{i}(u,\bxi_0^s), 
\end{align}
where $P^M$ denotes the orthogonal projection onto polynomials in $u$ and $\bxi_0^s$, and $M$ is the total number of degrees of freedom used in the expansion. Throughout this section, we  use a linear indexing to represent polynomials in both $u$ (restricted on $A_{3R}$) and the random variables $\bxi$.

The main assumptions we imposed on the solution operator $F_s$ are summarized  as follows: 
\begin{enumerate}[i)]
\item  $ \E_{\bxi_0^s}|F_{s}(0,\bxi_0^s)|^p  \leq C_s$, where  $0 <C_s< \infty$ and $p = 2+\epsilon$ with  $\epsilon>0$. \label{item:zero}
\item  $ \E_{\bxi_0^s}|F_{s}(u,\bxi_0^s) - F_{s}(v,\bxi_0^s)|^p  \leq \rho^p_{s} \, |u-v|^p$, where $u,v \in \R^d$,  $0 <\rho_s< \infty$, and $p = 2+\epsilon$ with  $\epsilon>0$. \label{item:one}
\item For  $\varepsilon>0$ and $R>0$, there is $M(\varepsilon, R,s)$ so that $\E_{\bxi_0^s} |F^R_s(u,\bxi_0^s) - F_s^{M,R}(u,\bxi_0^s)|^2  \leq \varepsilon$, where $u \in A_{3R}$. \label{item:two}
\end{enumerate}
Assumption \ref{item:zero} is a stability estimate ensuring that the chain remains bounded in $p$th norm starting from a point  in $\R^d$ (here $0$ without loss of generality owing to \ref{item:one}). Assumption \ref{item:one} is a Lipschitz growth condition controlled by a constant $\rho_s$. These assumptions involve the $L^p$ norm, and hence are slightly stronger than control in the $L^2$ sense. Note that $F_s^R$ also satisfies assumption \ref{item:one} with the same constant $\rho_s$ since ${\rm Lip}(\chi_R)=1$. Finally, assumption \ref{item:two} is justified by the  Weierstrass approximation theorem for the $u$ variable in $A_{3R}$ and by the Cameron--Martin theorem to handle the $\bxi$ variable. 

Consider now the following versions of the Markov chains:  
\begin{align}  
 \label{eq:sampch1} 
u_{j+1} &:= F_{s}(u_{j},\bxi_j),  \quad j=0,\ldots,n-1, \\
 \label{eq:sampch2} 
u_{j+1}^R &:= F_{s}^R (u_j^R, \bxi_j), \quad u_0^R = \chi_R (u_0),\quad j=0,\ldots,n-1,  \\ 
 \label{eq:sampch3}
u_{j+1}^{M,R} &:= F_{s}^{M,R} (u_j^{M,R}, \bxi_j), \quad u_0^{M,R} = \chi_R (u_0),\quad j=0,\ldots,n-1,
\end{align}
where $u_0\in\R^d$ (possibly random and independent of the variables $\bxi$) and $\bxi_j = \bxi_{js}^{(j+1)s}$ is a sequence of i.i.d. Gaussian random variables representing Brownian motion on the interval $[js, (j+1)s]$. We also use the notation $\bxi_0^{js}$ to denote Brownian motion on the interval $[0,js]$. Then we have the first result.

\begin{lemma}  \label{lemma:Rapprox}
Assume \ref{item:zero}, \ref{item:one}, and \ref{item:two} hold, and let $n \in \mathbb{N}$ be finite. Suppose also that $\E |u_0|^p =C_0 <\infty$.  Then, for each $\varepsilon>0$, there exists $R \in \R_{+}$ depending on $n$, $s$, and $C_{0}$ such that $\E |u_j - u_j^R|^2 \leq \varepsilon$ for each $0 \leq j \leq n$.
\end{lemma}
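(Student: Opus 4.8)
The plan is to control the difference $u_j - u_j^R$ by induction on $j$, using the Lipschitz and stability hypotheses to show that the only place the chains differ is where the truncation $\chi_R$ actually bites, and that this ``bad event'' can be made arbitrarily small in probability by taking $R$ large. First I would set up the induction: the base case $j=0$ is immediate since $u_0 = \chi_R(u_0)$ on the event $\{u_0 \in A_R\}$, and $\chi_R(u_0)$ is bounded, so $\E|u_0 - u_0^R|^2 = \E[|u_0 - \chi_R(u_0)|^2 \mathbf{1}_{\{u_0 \notin A_R\}}]$, which tends to $0$ as $R \to \infty$ by dominated convergence (using $\E|u_0|^p = C_0 < \infty$ with $p > 2$). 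For the inductive step, I would write
\begin{align*}
u_{j+1} - u_{j+1}^R = \bigl( F_s(u_j,\bxi_j) - F_s(u_j^R,\bxi_j) \bigr) + \bigl( F_s(u_j^R,\bxi_j) - \chi_R \circ F_s(u_j^R,\bxi_j) \bigr),
\end{align*}
take $L^2(\Prm)$ norms, and use the triangle inequality. The first bracket is handled by assumption \ref{item:one} (in fact with the $L^p$ constant $\rho_s$, which by Jensen dominates the $L^2$ one): conditioning on $u_j, u_j^R$ and integrating over $\bxi_j$ gives $\E|F_s(u_j,\bxi_j) - F_s(u_j^R,\bxi_j)|^2 \leq \rho_s^2 \, \E|u_j - u_j^R|^2$, so this term inherits the inductive bound up to the factor $\rho_s$.

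The second bracket is the term where the truncation appears, and bounding it is the main obstacle. The idea is that $F_s(u_j^R,\bxi_j)$ differs from its truncation only on the event where $F_s(u_j^R,\bxi_j) \notin A_R$, and on that event $|F_s(u_j^R,\bxi_j) - \chi_R(F_s(u_j^R,\bxi_j))| \leq 2|F_s(u_j^R,\bxi_j)|$ (or some fixed multiple). So by Cauchy--Schwarz with exponents $p/2$ and its conjugate,
\begin{align*}
\E\bigl|F_s(u_j^R,\bxi_j) - \chi_R \circ F_s(u_j^R,\bxi_j)\bigr|^2 \lesssim \bigl( \E|F_s(u_j^R,\bxi_j)|^p \bigr)^{2/p} \, \Prm\bigl( |F_s(u_j^R,\bxi_j)| > R \bigr)^{1 - 2/p}.
\end{align*}
The $L^p$ moment here is finite and bounded uniformly in $R$: combining \ref{item:zero} and \ref{item:one} gives $\E|F_s(u_j^R,\bxi_j)|^p \leq C(\rho_s^p \E|u_j^R|^p + C_s)$, and $\E|u_j^R|^p$ is controlled because $u_j^R$ is built by iterating $F_s^R$ starting from the bounded variable $\chi_R(u_0)$ — one propagates the $L^p$ bound along the chain, getting a constant that depends on $n$, $s$, $C_0$ (and $\rho_s$) but not on $R$. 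Meanwhile, by Markov's inequality, $\Prm(|F_s(u_j^R,\bxi_j)| > R) \leq R^{-p} \E|F_s(u_j^R,\bxi_j)|^p$, which goes to $0$ as $R \to \infty$. Hence the second bracket is $O(R^{-(p-2)})$, uniformly in $j \leq n-1$.

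Putting the pieces together, if $a_j := \bigl(\E|u_j - u_j^R|^2\bigr)^{1/2}$ and $\delta_R$ denotes a bound on the (square root of the) second bracket plus the base-case error, both of which vanish as $R\to\infty$, the recursion reads $a_{j+1} \leq \rho_s \, a_j + \delta_R$ with $a_0 \leq \delta_R$, which unrolls to $a_n \leq \delta_R \sum_{i=0}^{n-1} \rho_s^{\,i} \leq \delta_R \cdot n \max(1,\rho_s)^{n-1}$. Since $n$ is fixed and finite, the prefactor is a finite constant depending only on $n$, $s$, $\rho_s$; choosing $R$ large enough that $\delta_R$ times this constant is below $\sqrt{\varepsilon}$ gives $\E|u_j - u_j^R|^2 \leq \varepsilon$ for all $0 \leq j \leq n$, which is the claim. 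The only subtlety to watch is that the ``constants'' stay genuinely independent of $R$ throughout — in particular that the $L^p$ bound on $u_j^R$ is established first (a short separate induction, using that $\mathrm{Lip}(\chi_R)=1$ so $|\chi_R(x)| \leq |x|$ and $F_s^R$ satisfies \ref{item:one} with constant $\rho_s$), and only then fed into the $L^2$ difference estimate.
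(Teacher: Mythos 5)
Your proof is correct and takes essentially the same route as the paper's: uniform-in-$R$ moment bounds in $L^p$ deduced from assumptions \ref{item:zero} and \ref{item:one}, a H\"older--Markov estimate showing the truncation contribution vanishes as $R \to \infty$, and a finite-step Lipschitz recursion unrolled over the $n$ restarts. The only (immaterial) difference is the middle term in the one-step splitting: you insert $F_s(u_j^R,\bxi_j)$ and therefore bound moments of the truncated chain $u_j^R$, while the paper inserts $\chi_R u_{j+1}$ and uses ${\rm Lip}(\chi_R)=1$, bounding moments of the true chain $u_j$ instead.
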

\begin{proof}
Let $\varepsilon>0$ be fixed. Using properties  \ref{item:zero} and \ref{item:one}, we observe that
\begin{align*}
\E_{\bxi_0^{(j+1)s}} |F_s(u_j,\bxi_j)|^p  &\leq 2^{p-1} \left(  \E_{\bxi_{0}^{js}} \E_{\bxi_j} |F_s(u_j,\bxi_j) -F_s(0,\bxi_j) |^p + \E_{\bxi_{0}^{js}} \E_{\bxi_j} |F_s(0,\bxi_j)|^p \right), \\
& \leq 2^{p-1} \left(  \rho_s^p \, \E_{\bxi_{0}^{js}}  |u_j |^p+C_s\right),
\end{align*}
where for all $j$, $\bxi_{j}$ and $\bxi_0^s$ are identically distributed. Then, by induction, we obtain
\begin{align} \label{eq:ujpbound}
\E_{u_0,\bxi_0^{(j+1)s}} |u_{j+1}|^p \leq (2^{p-1} \rho_s^p)^{j+1} \, \E|u_0|^p +      C_{s,n}^p \leq C_{s,n,C_{0}} , \quad  0 \leq j \leq n-1,
\end{align}
where the constant $C_{s,n,C_0}$ is bounded. The last inequality indicates that $p$th norm of the solution grows with $j$, possibly exponentially, but remains bounded. 

For $u_j =F_{js}(u_0,\bxi_{0}^{js})$, we note that $\E |u_j|^2 = \E_{u_0,\bxi_{0}^{js}}|u_j|^2$ since $u_j$ is independent of future forcing. By Markov's inequality and  \eqref{eq:ujpbound}, we get
\begin{align}\label{eq:markovbound}
\Prm ( u_j \in A_R^c )  \leq \frac{\E |u_j|^2}{R^2}  \leq C_{s,n,C_0} \, R^{-2}, \quad  1 \leq j \leq n.
\end{align}
Let $\mathbbm{1}_{A}$ be the indicator function of the set $A$.
Using \eqref{eq:ujpbound} and \eqref{eq:markovbound}, we compute the following error bound
\begin{align}
\nonumber
 \E | u_j - \chi_R u_j  |^2  
  &  =\E [ (\mathbbm{1}_{\{u_j \in A_R \}}+\mathbbm{1}_{ \{ u_j \in A_R^c \}} ) |u_{j} -\chi_R u_{j} |^2 ], \quad  1 \leq j \leq n,\\
  \nonumber
  & \leq 4 \, \E [ \mathbbm{1}_{ \{ u_{j} \in A_R^c \}}  |u_{j} |^2 ]  \leq 4 \, (\E |u_j|^p )^{2/p} \, \Prm (u_j \in  A_R^c)^{1/q}, \quad p=2+\epsilon  , q= 1+2\epsilon^{-1}, \\ 
  \label{eq:ujRbound}
  &  \leq C_{s,n,C_0} \,  R^{-2/q}.
\end{align}
Then, using property \ref{item:one} gives
\begin{align*}
\E_{\bxi_{j-1}}| u_j - u_j^R |^2& = \E_{\bxi_{j-1}}| F_{s}(u_{j-1},\bxi_{j-1}) - F^R_{s}(u^R_{j-1},\bxi_{j-1})|^2, \\
\nonumber
 & \leq  (1+\delta^{-1}) \E_{\bxi_{j-1}}|  u_j - \chi_R  u_j  |^2   \\
 & \qquad \quad
  + (1+\delta) \E_{\bxi_{j-1}}| \chi_R  F_{s}(u_{j-1},\bxi_{j-1}) - F^R_{s}(u^R_{j-1},\bxi_{j-1}) |^2, \quad \delta >0 ,\\
\nonumber
& \leq  (1+\delta^{-1}) \E_{\bxi_{j-1}}|  u_j - \chi_R u_j  |^2   
 +(1+\delta)   \rho_s^2  | u_{j-1} - u_{j-1}^R |^2. 
\end{align*}
 Taking expectation with respect to remaining measures and using \eqref{eq:ujRbound}  yields
\begin{align}
\nonumber
\E| u_j - u_j^R |^2 & \leq  (1+\delta)  \rho_s^2  \, \E| u_{j-1} - u_{j-1}^R |^2  + (1+\delta^{-1}) C_{s,n,C_0} \, R^{-2/q},\\
\label{eq:lasteqn}
&  \leq ((1+\delta)  \rho_s^2)^{j}  \E | u_{0} - u_{0}^R |^2 + (1+\delta^{-1}) C_{s,n,C_0} \,  R^{-2/q} \sum_{i=0}^{j-1} ((1+\delta)  \rho_s^2)^{i}.
 \end{align}
Setting $\delta =1$ in \eqref{eq:lasteqn} entails
 \begin{align*}
 \E | u_j - u_j^R |^2 & 
  \leq 2^j  \rho_s^{2j} \, \E | u_{0} - u_{0}^R |^2  +  \,C_{s,n,C_0} \, R^{-2/q}. 
 \end{align*}
 Since $\E |u_0|^p <\infty$, we also have $\E |u_0 -u_0^R|^2 =O(R^{-2/q})$. Thus, we can choose $R( n, s ,C_{0}) \in \R_{+}$ large enough such that $\E | u_j - u_j^R |^2 \leq \varepsilon$ for each $0 \leq j \leq n$. 
\end{proof}

Based on the preceding lemma, we prove that the chain \eqref{eq:sampch3} converges to the solution of the chain \eqref{eq:sampch1} in the setting of a finite number of restarts. 

\begin{theorem} \label{thm:convfinite}
 Under the assumptions of Lemma \ref{lemma:Rapprox}, for each $\varepsilon>0$, there exists $R(n, s, C_{0}) \in \R_{+}$ and then $M(R,n, s)\in \mathbb{N}_0$ such that $\E |u_j - u_j^{M,R}|^2 \leq \varepsilon$ for each $0 \leq j \leq n$.
\end{theorem}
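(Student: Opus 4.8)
The plan is to combine Lemma \ref{lemma:Rapprox} with a second triangle-inequality argument that upgrades the $R$-truncated chain $u_j^R$ to the fully discretized chain $u_j^{M,R}$, using assumption \ref{item:two} to control the polynomial projection error at each step. Fix $\varepsilon>0$. First I would apply Lemma \ref{lemma:Rapprox} to obtain $R=R(n,s,C_0)$ such that $\E|u_j-u_j^R|^2\leq\varepsilon/2$ for all $0\leq j\leq n$; by the triangle inequality it then suffices to produce $M$ such that $\E|u_j^R-u_j^{M,R}|^2\leq\varepsilon/2$ for all $0\leq j\leq n$. Both chains $u_j^R$ and $u_j^{M,R}$ are valued in $A_{3R}$ (by construction of $\chi_R$ and $P^M$, where I note $P^M$ acting on the $A_{3R}$-supported variable still lands in, or can be assumed to land in, the truncated cube — if not one simply re-applies $\chi_R$ at negligible extra cost), which is exactly the regime where assumption \ref{item:two} applies.

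The heart of the argument is the recursion for $e_j:=\big(\E|u_j^R-u_j^{M,R}|^2\big)^{1/2}$. Writing $u_{j+1}^R-u_{j+1}^{M,R}=F_s^R(u_j^R,\bxi_j)-F_s^{M,R}(u_j^{M,R},\bxi_j)$, I would insert the intermediate term $F_s^{M,R}(u_j^R,\bxi_j)$ and split:
\begin{align*}
u_{j+1}^R-u_{j+1}^{M,R} = \big(F_s^R(u_j^R,\bxi_j)-F_s^{M,R}(u_j^R,\bxi_j)\big) + \big(F_s^{M,R}(u_j^R,\bxi_j)-F_s^{M,R}(u_j^{M,R},\bxi_j)\big).
\end{align*}
The first bracket is bounded in $L^2$ by assumption \ref{item:two} (choosing $M=M(\varepsilon',R,s)$ for a target $\varepsilon'$ to be fixed, uniformly over the input point in $A_{3R}$, after integrating over $u_j^R$). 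For the second bracket, $F_s^{M,R}=P^M\circ F_s^R$ is a composition of the Lipschitz map $F_s^R$ (Lipschitz constant $\rho_s$ in $L^p$, hence also in $L^2$, since ${\rm Lip}(\chi_R)=1$) with the orthogonal projection $P^M$, which is an $L^2$-contraction; so this term is bounded by $\rho_s\,\E|u_j^R-u_j^{M,R}|^2{}^{1/2}=\rho_s e_j$. Using Minkowski's inequality (or the same $(1+\delta)$-splitting trick as in Lemma \ref{lemma:Rapprox}, with $\delta=1$) gives $e_{j+1}^2\leq 2\rho_s^2 e_j^2 + 2\varepsilon'$, and since $e_0=0$ (both chains start at $\chi_R(u_0)$), iterating yields $e_j^2\leq 2\varepsilon'\sum_{i=0}^{j-1}(2\rho_s^2)^i\leq 2\varepsilon'\,n\,\max(1,2\rho_s^2)^{n}$ for $0\leq j\leq n$. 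Choosing $\varepsilon'$ small enough — depending on $n$, $s$, $\rho_s$ — so that this bound is $\leq\varepsilon/2$, and then $M=M(\varepsilon',R,s)$ accordingly, completes the proof.

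One subtlety I would flag explicitly: assumption \ref{item:two} is stated for a fixed input point $u\in A_{3R}$, so to control $\E|F_s^R(u_j^R,\bxi_j)-F_s^{M,R}(u_j^R,\bxi_j)|^2$ I need the bound to hold uniformly in $u\in A_{3R}$ (then integrate over the law of $u_j^R$ using independence of $\bxi_j$ from $u_j^R$). Either this uniformity is read into \ref{item:two} as stated — $M(\varepsilon,R,s)$ does not depend on the base point — or one invokes compactness of $A_{3R}$ together with continuity in $u$ of the projection error. I expect the main obstacle, such as it is, to be bookkeeping: making sure the $L^p$ stability bound \eqref{eq:ujpbound} is not actually needed here (it is not — $e_0=0$ and the recursion is self-contained, closing on $L^2$ quantities only), and that the dependence chain $\varepsilon\to\varepsilon'\to M$, with $R$ already fixed from Lemma \ref{lemma:Rapprox}, is logically consistent with the order of quantifiers in the statement ($R$ first, then $M=M(R,n,s)$). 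Everything else is a routine geometric-series estimate identical in structure to the one already carried out in the proof of Lemma \ref{lemma:Rapprox}.
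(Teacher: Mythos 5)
There is a genuine gap at the heart of your recursion, namely the treatment of the second bracket. You bound $\E\bigl|F_s^{M,R}(u_j^R,\bxi_j)-F_s^{M,R}(u_j^{M,R},\bxi_j)\bigr|^2$ by $\rho_s^2\,e_j^2$ on the grounds that $F_s^{M,R}=P^M\circ F_s^R$ is ``a Lipschitz map composed with an $L^2$-contraction.'' This conflates two different notions: $P^M$ is a contraction as an operator on the function space $L^2$ of the joint variables $(u,\bxi)$ with respect to a fixed reference measure, whereas what your step needs is a pointwise-in-$u$ Lipschitz bound, $\E_{\bxi}|F_s^{M,R}(u,\bxi)-F_s^{M,R}(v,\bxi)|^2\leq \rho_s^2|u-v|^2$, valid at the two correlated random arguments $u_j^R$ and $u_j^{M,R}$. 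Orthogonal projection onto a finite polynomial space does not preserve pointwise Lipschitz constants: the truncated expansion of a Lipschitz function can oscillate, and by Markov-type inequalities its Lipschitz constant on $A_{3R}$ can grow with the degree, i.e.\ with $M$. Since your argument must fix the recursion constant \emph{before} sending $M\to\infty$ (you choose $\varepsilon'$, hence $M$, after the geometric-series constant is set), replacing $\rho_s$ by an $M$-dependent constant makes the argument circular. Assumptions \ref{item:zero}--\ref{item:two} simply do not give Lipschitz control of the projected map $F_s^{M,R}$.

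The paper avoids this by inserting the other intermediate term, $F^R_{s}(u_{j-1}^{M,R},\bxi_{j-1})$, so that the split reads: (Lipschitz bound for $F_s^R$, which genuinely holds with constant $\rho_s$ by \ref{item:one} and ${\rm Lip}(\chi_R)=1$, applied to $|u_{j-1}^R-u_{j-1}^{M,R}|$) plus (projection error \ref{item:two} evaluated at the single point $u_{j-1}^{M,R}$). Your structure can be repaired along the same lines — e.g.\ split your second bracket once more through $F_s^R(u_j^R,\bxi_j)$ and $F_s^R(u_j^{M,R},\bxi_j)$, which reduces it to the paper's decomposition at the cost of an extra application of \ref{item:two} — but as written the key inequality $e_{j+1}^2\leq 2\rho_s^2e_j^2+2\varepsilon'$ is unjustified. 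The remaining points you raise are minor: the uniformity of \ref{item:two} in $u\in A_{3R}$ is indeed how the paper reads that assumption, and your $\varepsilon/2$ budget should be $\varepsilon/4$ (since $\E|a+b|^2\leq 2\E|a|^2+2\E|b|^2$), which is harmless bookkeeping.
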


\begin{proof}
Let $\varepsilon >0$ be fixed. By the previous lemma, choose $R>0$ such that $\E | u_j - u_j^R |^2 \leq \varepsilon/4 $ for $0 \leq j \leq n$. Then, using calculations similar  to those above, by \ref{item:one} we get 
\begin{align*}
\E_{\bxi_{j-1}} | u_{j}^R - u_{j}^{M,R} |^2 & \leq (1+\delta) \E_{\bxi_{j-1}} | F^R_{s}(u_{j-1}^{R},\bxi_{j-1}) - F^R_{s}(u_{j-1}^{M,R},\bxi_{j-1}) |^2  \\
& \qquad  \quad  +(1+\delta^{-1}) \E_{\bxi_{j-1}}| F^R_{s}(u_{j-1}^{M,R},\bxi_{j-1}) - F^{M,R}_{s}(u_{j-1}^{M,R},\bxi_{j-1})|^2,  \delta >0,\\
& \leq  2  \rho^2_{s} \,| u_{j-1}^R - u_{j-1}^{M,R} |^2  \\
& \qquad  \quad  +2 \, \E_{\bxi_{j-1}}| F^R_{s}(u_{j-1}^{M,R},\bxi_{j-1}) -  F^{M,R}_{s}(u_{j-1}^{M,R},\bxi_{j-1})  |^2, \, \delta= 1,\\
& =   \rho_* \,| u_{j-1}^R - u_{j-1}^{M,R} |^2   + 2 \, \E_{\bxi_{j-1}}| F^R_{s}(u_{j-1}^{M,R},\bxi_{j-1}) -  F^{M,R}_{s}(u_{j-1}^{M,R},\bxi_{j-1})  |^2,
\end{align*}
where $j \geq 1$ and $\rho_*:= 2\rho_s^2$. Then by \ref{item:two}, we choose $M(R,n,s)$ sufficiently large so that
\begin{align} \label{eq:ujmr}
\E_{\bxi_{j-1}} | u_{j}^R - u_{j}^{M,R} |^2 
& \leq \rho_{*} \,| u_{j-1}^R - u_{j-1}^{M,R} |^2   + \frac{\varepsilon}{4} \frac{\rho_*-1}{\rho_*^n-1}.
\end{align}
The last inequality can be rewritten as 
\begin{align*}
\E | u_{j}^R - u_{j}^{M,R} |^2 
& \leq \rho_{*}^j \,\E| u_{0}^R - u_{0}^{M,R} |^2   + \varepsilon/4 = \varepsilon/4 .
\end{align*}
Therefore, $\E| u_{j} - u_{j}^{M,R} |^2 \leq  \varepsilon$ for each $ 1 \leq j \leq n$.
\end{proof}

\subsubsection{Convergence to invariant measures and long time evolution}

Consider the setting of the solution operator to the SDE given in \eqref{eq:markov_proc}. As $s$ increases to $\infty$, the random variable $u(s)$ may converge in distribution to a limiting random variable $u_\infty$, whose distribution is the invariant measure of the evolution equation \eqref{eq:markov_proc}. Although there are many efficient ways to analyze and compute such invariant measures (see for instance \cite{SS2000}), we wish to show that our iterative algorithm also converges to that  invariant measure as degrees of freedom tend to infinity. In other words, our PCE-based method allows us to remain accurate for long-time evolutions--something that cannot be achieved without restarts.

Now we iterate each discrete chain in \eqref{eq:sampch1}, \eqref{eq:sampch2} and \eqref{eq:sampch3} for an arbitrary  $j \in \mathbb{N}_0$. In order to ensure long-time convergence, we need a stricter condition than \ref{item:one} and impose instead the following contraction condition 
\begin{enumerate}[label=ii')]
\item  $ \E_{\bxi_0^s}|F_{s}(u,\bxi_0^s) - F_{s}(v,\bxi_0^s)|^p  \leq \rho^p_{s} \, |u-v|^p$, where $0<\rho_s< 1$,  $p = 2+\epsilon,$ $\epsilon>0$, and $u,v \in \R^d$. \label{item:onemod}
\end{enumerate}

We first prove the following result about the existence and uniqueness of an invariant measure for the original chain \eqref{eq:sampch1}. 
\begin{lemma} \label{lemma:invariant}
Under conditions \ref{item:zero} and \ref{item:onemod}, there exists a unique invariant measure $\nu$ of the chain \eqref{eq:sampch1} with bounded $p$th moment. Moreover, if $\E|u_0|^p <\infty$, then  $\E|u_j|^p$ is bounded uniformly in $j$.  
\end{lemma}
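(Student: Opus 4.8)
The plan is to prove the lemma via two essentially independent arguments: a recursive $p$th-moment estimate that is uniform in $j$ under the contraction hypothesis \ref{item:onemod}, and a Banach fixed-point argument for the one-step transition operator of the chain \eqref{eq:sampch1}, acting on the space of probability measures with finite $p$th moment equipped with the Wasserstein-$p$ metric $W_p$. The moment estimate delivers the ``moreover'' statement and, as a by-product, an explicit $p$th-moment bound inherited by the invariant measure; the fixed-point argument delivers existence and uniqueness of $\nu$.

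For the uniform moment bound I would proceed as in the first display of the proof of Lemma \ref{lemma:Rapprox}, but optimizing the splitting constant. Writing $F_s(u_j,\bxi_j)=[F_s(u_j,\bxi_j)-F_s(0,\bxi_j)]+F_s(0,\bxi_j)$ and using the elementary convexity bound $|a+b|^p\le(1+\delta)^{p-1}|a|^p+(1+\delta^{-1})^{p-1}|b|^p$ together with \ref{item:zero}, \ref{item:onemod} and the tower property over $\bxi_0^{js}$ and $\bxi_j$ (the $\bxi_j$ being i.i.d.\ copies of $\bxi_0^s$), one obtains
\[
\E|u_{j+1}|^p\le(1+\delta)^{p-1}\rho_s^p\,\E|u_j|^p+(1+\delta^{-1})^{p-1}C_s .
\]
Since $\rho_s<1$, fix $\delta>0$ small enough that $\kappa:=(1+\delta)^{p-1}\rho_s^p<1$; iterating then gives $\E|u_j|^p\le\kappa^{\,j}\,\E|u_0|^p+(1+\delta^{-1})^{p-1}C_s/(1-\kappa)$, which is bounded uniformly in $j$ whenever $\E|u_0|^p<\infty$. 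The same recursion (with the cruder constant $2^{p-1}$ if preferred) shows that if $X$ has finite $p$th moment and $\bxi$ is an independent copy of $\bxi_0^s$, then $F_s(X,\bxi)$ has finite $p$th moment.

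For $\nu$, let $P$ be the Markov transition operator of \eqref{eq:sampch1}, i.e.\ $\mu P=\mathrm{Law}(F_s(X,\bxi))$ for $X\sim\mu$ and $\bxi$ an independent copy of $\bxi_0^s$. By the previous step $P$ maps $\mathcal{P}_p(\R^d)$ (probability measures with finite $p$th moment) into itself. Given $\mu,\nu\in\mathcal{P}_p(\R^d)$, take a $W_p$-optimal coupling $(X,Y)$ of $\mu$ and $\nu$ and drive the two chains with the \emph{same} noise $\bxi$; then $(F_s(X,\bxi),F_s(Y,\bxi))$ couples $\mu P$ and $\nu P$, and \ref{item:onemod} gives
\[
W_p(\mu P,\nu P)^p\le\E\,|F_s(X,\bxi)-F_s(Y,\bxi)|^p\le\rho_s^p\,\E|X-Y|^p=\rho_s^p\,W_p(\mu,\nu)^p .
\]
Hence $P$ is a $\rho_s$-contraction on the complete metric space $(\mathcal{P}_p(\R^d),W_p)$, and Banach's fixed-point theorem yields a unique fixed point $\nu$, i.e.\ a unique invariant probability measure with finite $p$th moment; its $p$th moment is bounded by $(1+\delta^{-1})^{p-1}C_s/(1-\kappa)$, obtained by passing to the limit in the moment recursion (or from lower semicontinuity of moments under $W_p$).

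I expect the argument to be largely routine; the only points requiring care are the choice of $\delta$ so that the convexity constant $(1+\delta)^{p-1}$ does not destroy the contraction in the moment recursion, and the invocation of completeness of $(\mathcal{P}_p(\R^d),W_p)$ together with well-definedness of $P$ on it. If one wishes to avoid optimal-transport machinery, one can instead start the chain from $u_0\equiv 0$, note that $W_p(\mathrm{Law}(u_{j+1}),\mathrm{Law}(u_j))\le\rho_s^{\,j}\,W_p(\mathrm{Law}(u_1),\mathrm{Law}(u_0))$, so $(\mathrm{Law}(u_j))_j$ is Cauchy in $W_p$, pass to the limit $\nu$, verify invariance from continuity of $P$ in $W_p$, and deduce uniqueness from the same contraction estimate; making this reformulation precise is the main thing to get right.
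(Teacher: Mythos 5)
Your proposal is correct, but it takes a genuinely more self-contained route than the paper. The paper proves the drift estimate $\E_{\bxi_0^s}|F_s(u,\bxi_0^s)|^p\le\rho_*|u|^p+C$ with $\rho_*=((1+\delta)\rho_s)^p<1$ and then \emph{cites} general results on ergodic Markov chains (a Lyapunov-function existence criterion and a contraction-based uniqueness theorem from Hairer's lecture notes) to get existence and uniqueness of $\nu$ with finite $p$th moment; you instead make this step explicit by showing, via the synchronous coupling, that the transition operator is a $\rho_s$-contraction on $(\mathcal{P}_p(\R^d),W_p)$ and invoking Banach's fixed-point theorem — essentially the proof behind the cited results, so nothing is lost and the argument becomes self-contained. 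For the ``moreover'' part the two proofs genuinely differ: the paper couples the chain started at $u_0$ with a stationary chain started from $v_0\sim\nu$ and transfers $\nu$'s finite $p$th moment through $\E|u_j-v_j|^p\le\rho_s^{pj}\,\E|u_0-v_0|^p$, whereas you obtain the uniform bound directly from the recursion $\E|u_{j+1}|^p\le\kappa\,\E|u_j|^p+(1+\delta^{-1})^{p-1}C_s$ with $\kappa=(1+\delta)^{p-1}\rho_s^p<1$, which needs no reference to $\nu$ at all (and also hands you the moment bound on $\nu$ in the limit); your observation that $\delta$ must be tuned so the convexity constant does not destroy the contraction is exactly the point that distinguishes this from the cruder $2^{p-1}$ splitting used in the proof of Lemma \ref{lemma:Rapprox}. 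One minor remark: your uniqueness is stated within $\mathcal{P}_p(\R^d)$, which matches the lemma as phrased; if one wants uniqueness among all invariant measures, a short truncation argument applied to the drift inequality shows that any invariant measure of \eqref{eq:sampch1} automatically has finite $p$th moment, after which your fixed-point uniqueness applies.
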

\begin{proof}
For $u \in \R^d$ (and using that $|a+b|^p\leq (1+\delta)^p|a|^p+(1+\delta^{-1}))^p|b|^p$), we compute 
\begin{align*}
\E_{\bxi_0^s}|F_{s}(u,\bxi_0^s)|^p &\leq (1+\delta)^p \, \E_{\bxi_0^s}|F_{s}(u,\bxi_0^s) - F_s(0,\bxi_0^s)|^p + (1+\delta^{-1})^p \, \E_{\bxi_0^s}|F_{s}(0,\bxi_0^s)|^p, \quad \delta >0,\\
&  \leq  ((1+\delta) \rho_s)^p \, |u|^p + (1+\delta^{-1})^p \, C_s, \\
&  = \rho_*\, |u|^p + C,
\end{align*}
where $\delta$ is chosen so that $\rho_{*}:= ((1+\delta)\rho_s)^p <1$ and $C <\infty$. Thus, there exists a Lyapunov function $V(u)=|u|^p$ with a constant $\rho_* <1$. This in turn implies the existence of an invariant measure $\nu$ with finite $p$th moment for the process \eqref{eq:sampch1}; see \cite[Corollary 4.23]{H06}. Uniqueness also follows from assumption \ref{item:onemod}; see \cite[Theorem 4.25]{H06}.

Let $v_0$ be a random variable with law $\nu$ and independent of $u_0$. Consider the chain $v_{j+1} = F_s(v_j, \bxi_j)$ started from $v_0$.  Note  $v_j \sim \nu$ for all $j \in \mathbb{N}_0$. Then, from the bound
\begin{align*}
\E |u_j-v_j|^p  &\leq (\rho_s^p)^j \E|u_0-v_0|^p,
\end{align*}
 we conclude that   $\sup_j \E |u_j|^p < \infty$.
\end{proof}

The following theorem establishes the exponential convergence of the PC chain \eqref{eq:sampch3} to the chain \eqref{eq:sampch1} as time $j$ increases. 

\begin{theorem} Assume \ref{item:zero}, \ref{item:onemod} and \ref{item:two}  hold, and  $\E |u_0|^p <\infty$. Then, for each $\varepsilon>0$, there exists $R>0$ independent of $j$, and then $M(R,s) \in \mathbb{N}_0$, such that $\E|u_j - u_j^{M,R}|^2 \leq \varepsilon$ for all $j \in \mathbb{N}_{0}$. 
\end{theorem}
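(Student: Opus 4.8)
The plan is to re-run the two-step scheme behind Lemma~\ref{lemma:Rapprox} and Theorem~\ref{thm:convfinite}, but now tracking every constant so that none of them degrades as $j\to\infty$; the only structural change is that the strict contraction in \ref{item:onemod} (rather than the mere boundedness in \ref{item:one}) lets us sum the relevant geometric series to a $j$-independent bound. Concretely I would write $\E|u_j-u_j^{M,R}|^2\leq 2\,\E|u_j-u_j^R|^2+2\,\E|u_j^R-u_j^{M,R}|^2$ and bound the two pieces separately, fixing first $R$ (independent of $j$) and then $M=M(R,s)$.

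\emph{Step 1 (uniform truncation error).} First I would invoke Lemma~\ref{lemma:invariant}: under \ref{item:zero} and \ref{item:onemod} it gives $C_\infty:=\sup_j\E|u_j|^p<\infty$, hence $\sup_j\E|u_j|^2\leq C_\infty^{2/p}$ by Jensen. This is exactly the substitute for the $n$-dependent stability bound \eqref{eq:ujpbound}. With $C_\infty$ in place of $C_{s,n,C_0}$, the Markov-inequality and Hölder estimates \eqref{eq:markovbound}--\eqref{eq:ujRbound} go through verbatim and give $\E|u_j-\chi_R u_j|^2\leq C\,R^{-2/q}$ uniformly in $j$, with $q=1+2\epsilon^{-1}$. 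Next, since \ref{item:onemod} yields (by Jensen, $2/p<1$) the $L^2$ bound $\E|F_s(u)-F_s(v)|^2\leq\rho_s^2|u-v|^2$ with the \emph{same} $\rho_s<1$ — and likewise for $F_s^R$ because ${\rm Lip}(\chi_R)=1$ — the argument leading to \eqref{eq:lasteqn} produces
\[
\E|u_j-u_j^R|^2\ \leq\ (1+\delta)\rho_s^2\,\E|u_{j-1}-u_{j-1}^R|^2\ +\ (1+\delta^{-1})\,C\,R^{-2/q}.
\]
Here is the first place the proof departs from the finite-$n$ case: instead of $\delta=1$ (which would give the useless ratio $2\rho_s^2$), I would pick $\delta>0$ small enough that $\kappa:=(1+\delta)\rho_s^2<1$, which is possible precisely because $\rho_s<1$. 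Summing the geometric series and using $\E|u_0-u_0^R|^2=O(R^{-2/q})$ gives $\sup_j\E|u_j-u_j^R|^2\leq C'R^{-2/q}/(1-\kappa)$, so one can fix $R$, independent of $j$, making this $\leq\varepsilon/4$.

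\emph{Step 2 (uniform projection error).} With $R$ fixed, I would repeat the estimate \eqref{eq:ujmr}, again with a small $\delta$ rather than $\delta=1$: combining the $L^2$-Lipschitz bound for $F_s^R$ with \ref{item:two} gives
\[
\E_{\bxi_{j-1}}|u_j^R-u_j^{M,R}|^2\ \leq\ \kappa\,|u_{j-1}^R-u_{j-1}^{M,R}|^2\ +\ (1+\delta^{-1})\,\E_{\bxi_{j-1}}\bigl|F_s^R(u_{j-1}^{M,R},\bxi_{j-1})-F_s^{M,R}(u_{j-1}^{M,R},\bxi_{j-1})\bigr|^2,
\]
and the last expectation is $\leq(1+\delta^{-1})\varepsilon'$ once $M\geq M(\varepsilon',R,s)$ from \ref{item:two}, using that each $\bxi_j$ is an i.i.d.\ copy of $\bxi_0^s$ and — as already tacitly used in Theorem~\ref{thm:convfinite} — that the iterate $u_{j-1}^{M,R}$ lies in $A_{3R}$ so that \ref{item:two} applies along the chain. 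Since $\kappa<1$ and $u_0^R=u_0^{M,R}=\chi_R(u_0)$, iterating gives $\sup_j\E|u_j^R-u_j^{M,R}|^2\leq(1+\delta^{-1})\varepsilon'/(1-\kappa)$; choosing $\varepsilon'$ (hence $M=M(R,s)$) to make this $\leq\varepsilon/4$ and combining with Step~1 yields $\E|u_j-u_j^{M,R}|^2\leq\varepsilon$ for all $j\in\mathbb{N}_0$. Together with Lemma~\ref{lemma:invariant} (the law of $u_j$ converging to $\nu$), this shows the fully discrete scheme stays within $\varepsilon$ of the invariant measure uniformly in time.

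\emph{Main obstacle.} The whole difficulty is the passage from $j$-dependent to $j$-uniform constants, and it has exactly two ingredients, both resting on \ref{item:onemod} rather than \ref{item:one}: (a) replacing the $n$-exponentially-growing stability bound \eqref{eq:ujpbound} by the uniform moment bound of Lemma~\ref{lemma:invariant}; and (b) tuning the Young-inequality parameter $\delta$ so that the contraction ratio $(1+\delta)\rho_s^2$ stays strictly below $1$. A secondary, purely technical point inherited from the finite-restart proof is checking that $u_j^{M,R}\in A_{3R}$, so that assumption \ref{item:two} is legitimately invoked at every step.
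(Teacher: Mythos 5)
Your proposal follows the paper's own proof essentially verbatim: uniform-in-$j$ moment bounds from Lemma \ref{lemma:invariant} make \eqref{eq:ujRbound} $j$-independent, the Young parameter $\delta$ is tuned so that $\rho_*=(1+\delta)\rho_s^2<1$ in both \eqref{eq:lasteqn} and \eqref{eq:ujmr}, and the two resulting geometric series are summed and combined, with only cosmetic differences (the paper absorbs the constant by choosing the \ref{item:two} tolerance as $\tfrac{\varepsilon}{4}(1-\rho_*)$ rather than dividing by $1-\kappa$ afterwards). Your remark that one must tacitly keep $u_{j}^{M,R}$ in $A_{3R}$ for \ref{item:two} to apply is a fair observation, but the paper leaves this implicit as well, so it does not distinguish the two arguments.
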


\begin{proof} Let $\varepsilon >0$ be given. By Lemma \ref{lemma:invariant}, $\E|u_j|^p$ is bounded uniformly in $j$. The inequality \eqref{eq:ujRbound} holds with a constant independent of $j$. Then, choosing $\delta>0$ so that $(1+\delta) \rho_s^2<1$ in \eqref{eq:lasteqn} implies that
there exists a constant $R(s,\nu)>0$ independent of $j$ such that $\E |u_j- u_j^R|^2 \leq \varepsilon/4$.   

As in the proof of Theorem \ref{thm:convfinite}, we choose $\delta>0$ so that $\rho_*:= (1+\delta) \rho_s^2<1$. Then, with $M(R,s) \in \mathbb{N}_0$ large enough, \eqref{eq:ujmr} is replaced by
\begin{align*}
\E_{\bxi_{j-1}} | u_{j}^R - u_{j}^{M,R} |^2 
& \leq \rho_{*} \,| u_{j-1}^R - u_{j-1}^{M,R} |^2   +   \frac{\varepsilon}{4} {(1-\rho_*)}, \quad  j \geq 1.
\end{align*}
 Therefore,
\begin{align*}
\E | u_{j}^R - u_{j}^{M,R} |^2 
& \leq \rho_{*}^j \,\E | u_{0}^R - u_{0}^{M,R} |^2   +     \frac{\varepsilon}{4} (1-\rho_*)  (1-\rho_*)^{-1} = \varepsilon/4 ,\quad  j \geq 1.  
\end{align*}
This concludes our proof of convergence of the DgPC method to the invariant measure $\nu$. 
\end{proof}

Let us conclude this section with the following remark.
\begin{remark} \rm
It would be desirable to obtain that the chain (corresponding to $R=\infty$, or equivalently no support truncation) $u^M_{j+1}=F^M_s(u^M_j,\bxi_j)$ remains determinate. We were not able to do so and instead based our theoretical results on the assumption that the true distributions of interest were well approximated by compactly supported distributions. In practice, the range of $M$ has to remain relatively limited for at least two reasons. First, large $M$ rapidly involves very large computational costs; and second, the determination of measures from moments becomes exponentially ill-posed as the degree of polynomials $N$ increases. For these reasons, the  support truncation has been neglected in the following numerical section since, heuristically, for large $R$ and limited $N$, the computation of (low order) moments of distributions with rapidly decaying density at infinity is hardly affected by such a  support truncation.
\end{remark}


\section{Numerical Experiments} \label{sec:numeric}

In this section, we present several numerical simulations of our algorithm and discuss the implementation details. We consider several of the equations described in~\cite{BM13,GHM10} to show that PCE-based simulations may perform well in such settings. We mostly consider the following two-dimensional nonlinear system of coupled SDEs: 
\begin{equation}
\begin{aligned} \label{eq:system}
du(s) & = -(b_u+ a_u v(s)) u(s) \, ds + f(s) \, ds+ \sigma_u \, dW_u(s) , \\
dv(s) &= -(b_v+a_v u(s)) \, v(s) \, ds + \sigma_v \, dW_v(s),
\end{aligned}
\end{equation}
where $a_u,a_v \geq 0$, $b_u,b_v>0$ are damping parameters, $\sigma_u,\sigma_v>0$  are constants, and $W_u$ and $W_v$  are two real independent Wiener processes.

The system \eqref{eq:system} was proposed in~\cite{GHM10} to study filtering of turbulent signals which exhibit intermittent instabilities.  The performance of Hermite PCE is analyzed in various dynamical regimes by the authors in~\cite{BM13}, who conclude that truncated PCE struggle to accurately capture the statistics of the solution in the long term due to both the truncated expansion of the white noise and neglecting higher order terms, which become crucial because of the nonlinearities. For a review of the different dynamical regimes that \eqref{eq:system} exhibits, we refer the reader to~\cite{GHM10,BM13}.

For the rest of the section, $t \in \R$ stands for the endpoint of the time interval while $\Delta t=t/n$ denotes the time-step after which restarts occur at $t_j= j \Delta t $. Moreover, $K$ denotes the number of basic random variables $\xi_i$ used in the truncation of the expansion \eqref{eq:expansion_W}. In the presence of two Wiener processes it will denote the total number of variables. We slightly change the previous notation and let $N$ and $L$ denote the maximum degree of the polynomials in $\bxi$ and $u_j$, respectively. Recall that $u_j$ is the projected PC solution at $t_j$.   Furthermore, following~\cite{Luo,HLRZ06} we choose the orthonormal bases for $L^2[t_{j-1},t_{j}]$ as
\begin{align} \label{eq:cosines}
m_1(s) =\frac{1}{\sqrt{t_{j}-t_{j-1}}}, \quad m_i(s) = \sqrt{\frac{2}{t_{j}-t_{j-1}}} \cos \left( \frac{(i-1) \pi s }{t_{j}-t_{j-1}} \right), \quad i \geq 2, \quad s \in [t_{j-1},t_{j}].
\end{align}
Other possible options for $m_i(s)$ include sine functions, a combination of sines and cosines, and wavelets. We refer the reader to ~\cite{MNGK04} for details on the use of wavelets to deal with discontinuities for random inputs. 

Assuming that the solution of \eqref{eq:system} is square integrable,  we utilize intrusive Galerkin projections in order to establish the following equations for the coefficients of the PCE:
\begin{align*}  
\dot{u}_{\alpha} & = -b_u \, u_{\alpha}+ a_u \, (uv)_{\alpha} + f \delta_{\alpha \boldsymbol{0}}+ \sigma_u \, E[\dot{W}_u T_{\alpha}] , \\
\dot{v}_{\alpha} &= -b_v\, v_{\alpha}+ a_v \, (uv)_{\alpha}  + \sigma_v \,E[\dot{W}_v T_{\alpha}].
\end{align*}
This system of ODEs is then solved by either  a second- or a fourth-order time integration method. Finally, we note that other methods are also available to compute the PCE coefficients
such as nonintrusive projection and collocation methods~\cite{XH05,LeMK10,GS91,Xiu_book,SG04}.

In most of our numerical examples, the dynamics converge to an invariant measure. To demonstrate the convergence behavior of our algorithm, we compare our results to exact second order statistics or Monte Carlo simulations with sufficiently high sampling rate $N_{samp}$ (e.g., Euler--Maruyama or weak Runge--Kutta methods~\cite{Kloeden}) where the exact solution is not available. Comparisons involve  the following relative pointwise errors:
\begin{align*}
\epsilon_{\mbox{mean}}(s) :=\bigg  \vert \frac{\mu_{\mbox{pce}}(s)- \mu_{\mbox{exact}}(s)}{\mu_{\mbox{exact}}(s)} \bigg \vert, \quad  \epsilon_{\mbox{var}}(s) :=\bigg  \vert \frac{\sigma^2_{\mbox{pce}}(s)- \sigma^2_{\mbox{exact}}(s)}{\sigma^2_{\mbox{exact}}(s)} \bigg \vert,
\end{align*}
where $\mu,\sigma^2$ represents the mean and variance. In the following figures, we plot the evolution of mean, variance, $\epsilon_{\mbox{mean}}$ and $\epsilon_{\mbox{var}}$ using the same legend. In addition, we exhibit the evolution of higher order cumulants, which will be denoted by $\kappa$ to demonstrate the convergence to steady state as time grows.  Finally, in our numerical computations, we make use of the C++ library UQ Toolkit~\cite{DNPKGL04}.

\subsection{Numerical examples}

\begin{exmp} \rm \,
As a first example, we consider the one-dimensional Ornstein-Uhlenbeck (OU) process
\begin{align} \label{eq:OU}
 d v(s) = -b_v \, v(s) \, ds + \sigma_v \, dW_v(s), \quad s \in [0,t], \quad v(0) = v_0,
\end{align}
on $[0,3]$ with the parameters $b_v=4,\sigma_v=2, v_0=1$. We first present the results of Hermite PCE.

\begin{figure}[!htb]
\centering
\subfigure[mean]{
\includegraphics[scale=0.24]{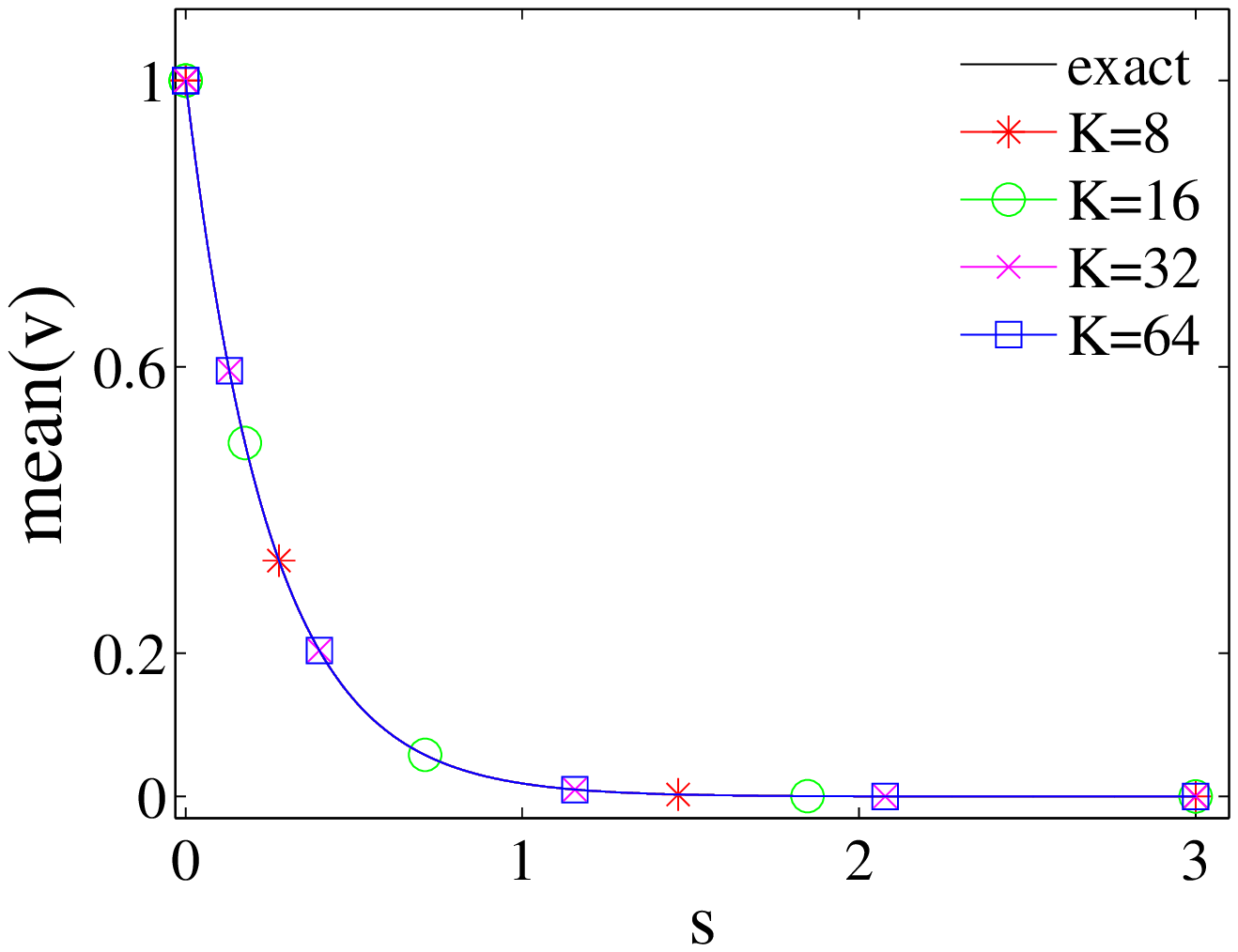}}
\subfigure[variance]{
\includegraphics[scale=0.24]{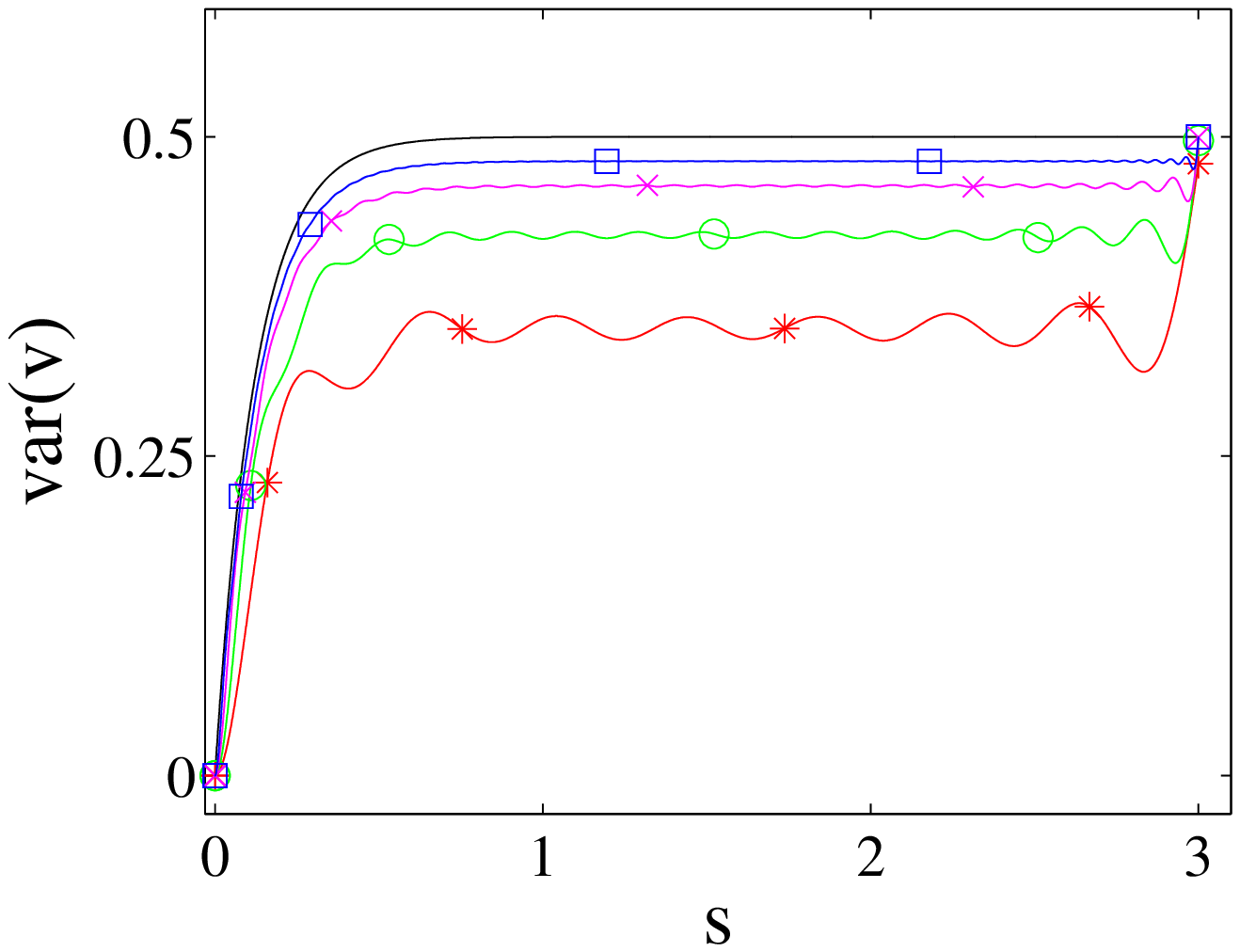}} 
\subfigure[$\epsilon_{\mbox{mean}}$]{
\includegraphics[scale=0.24]{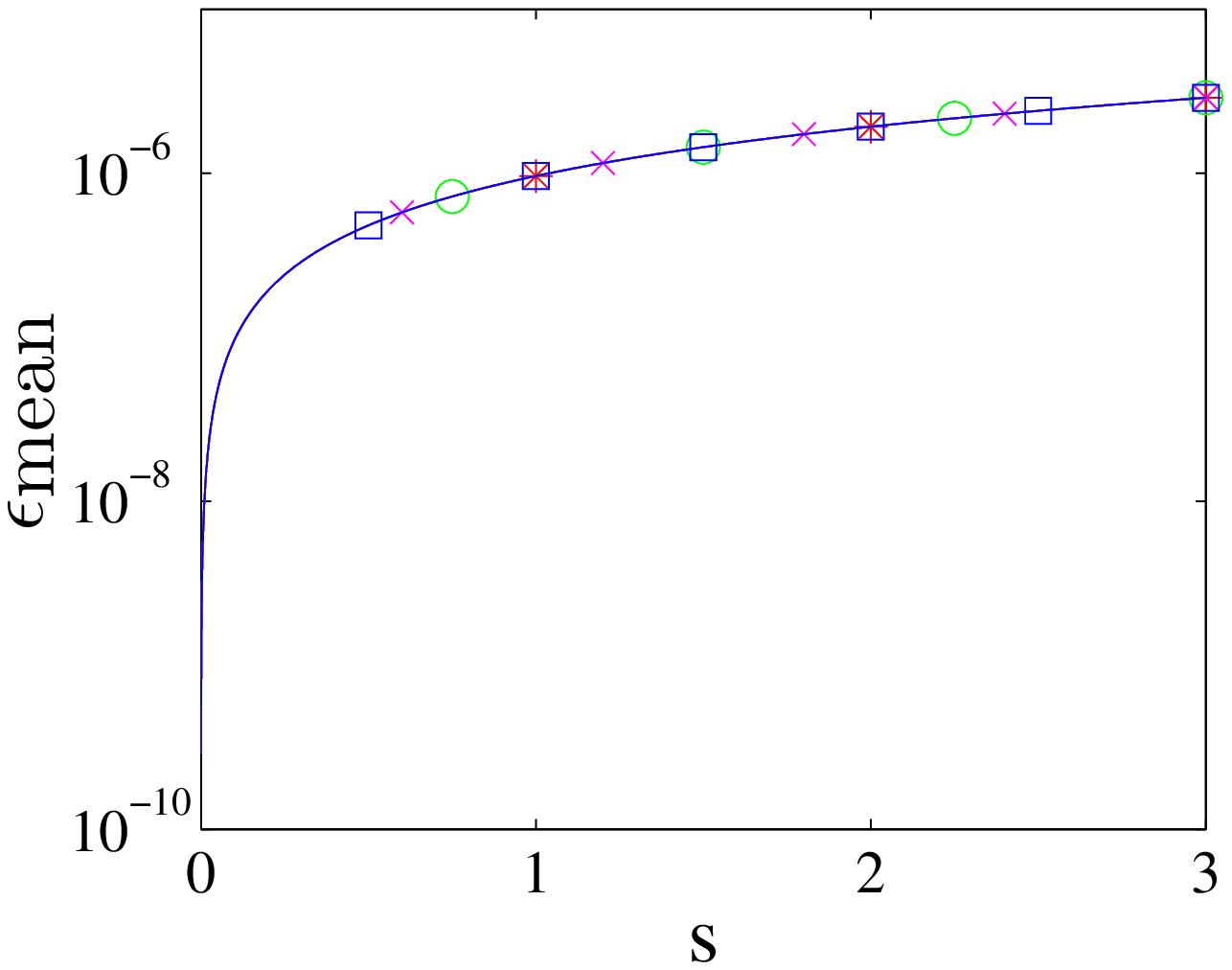}}
\subfigure[$\epsilon_{\mbox{var}}$]{
\includegraphics[scale=0.24]{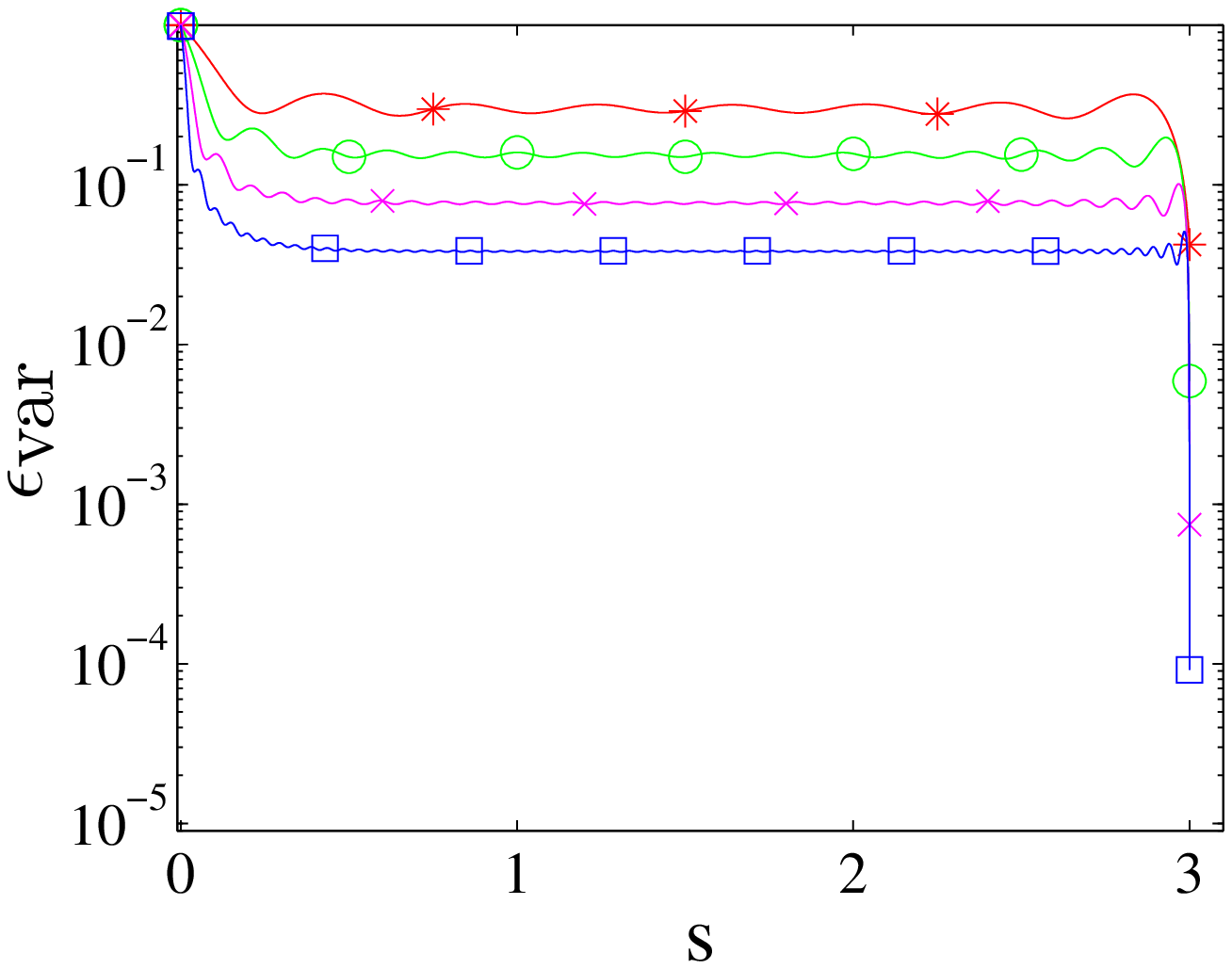}} 
\vspace{-0.2cm}
\caption{Hermite PCE.}
\label{fig:Ex1_1}
\end{figure} 

Figure \ref{fig:Ex1_1} shows that Hermite PC captures the mean accurately but  approximation for the variance is accurate only at the endpoint.  This is a consequence of the expansion in \eqref{eq:exp_W_err}, which violates the Markov property of Brownian motion and is inaccurate for all $0<s<t$, while it becomes (spectrally) accurate at the endpoint $t$. Using frequent restarts, these oscillations are significantly attenuated as expected; see Figure \ref{fig:Ex1_2b}. This oscillatory behavior could also be alleviated by expanding the Brownian motion as in \eqref{eq:exp_W_err} on subintervals of $(0,1)$. Note that the global basis functions $T_{\alpha}^t(\bxi)$ may also be filtered by taking the conditional expectation $\E[T_{\alpha}^t(\bxi) | \mathcal{F}_s^W]$,  where $\mathcal{F}_s^W$ is the $\sigma$-algebra corresponding to Brownian motion up to time $s$~\cite{Luo, MR04}. We do not pursue this issue here but note that the accuracy of the different methods should be compared at the end of the intervals of discretization of Brownian motion.

Next, we demonstrate numerical results for DgPC taking  $N=1$, $L=1$ and a varying $K=4,6,8$. Figure \ref{fig:Ex1_2} shows that as the exact solution converges to a steady state, our algorithm captures the second order statistics accurately even with a small degrees of freedom utilized in each subinterval. Moreover, although we do not show it here, it is possible, by increasing $L$, to approximate the higher order zero cumulants $\kappa_i, i=3,4,5,6$, with an error on the  order of machine precision,  which implies that the algorithm converges numerically to a Gaussian invariant measure.

\begin{figure}[!htb]
\centering
\subfigure[mean]{
\includegraphics[scale=0.24]{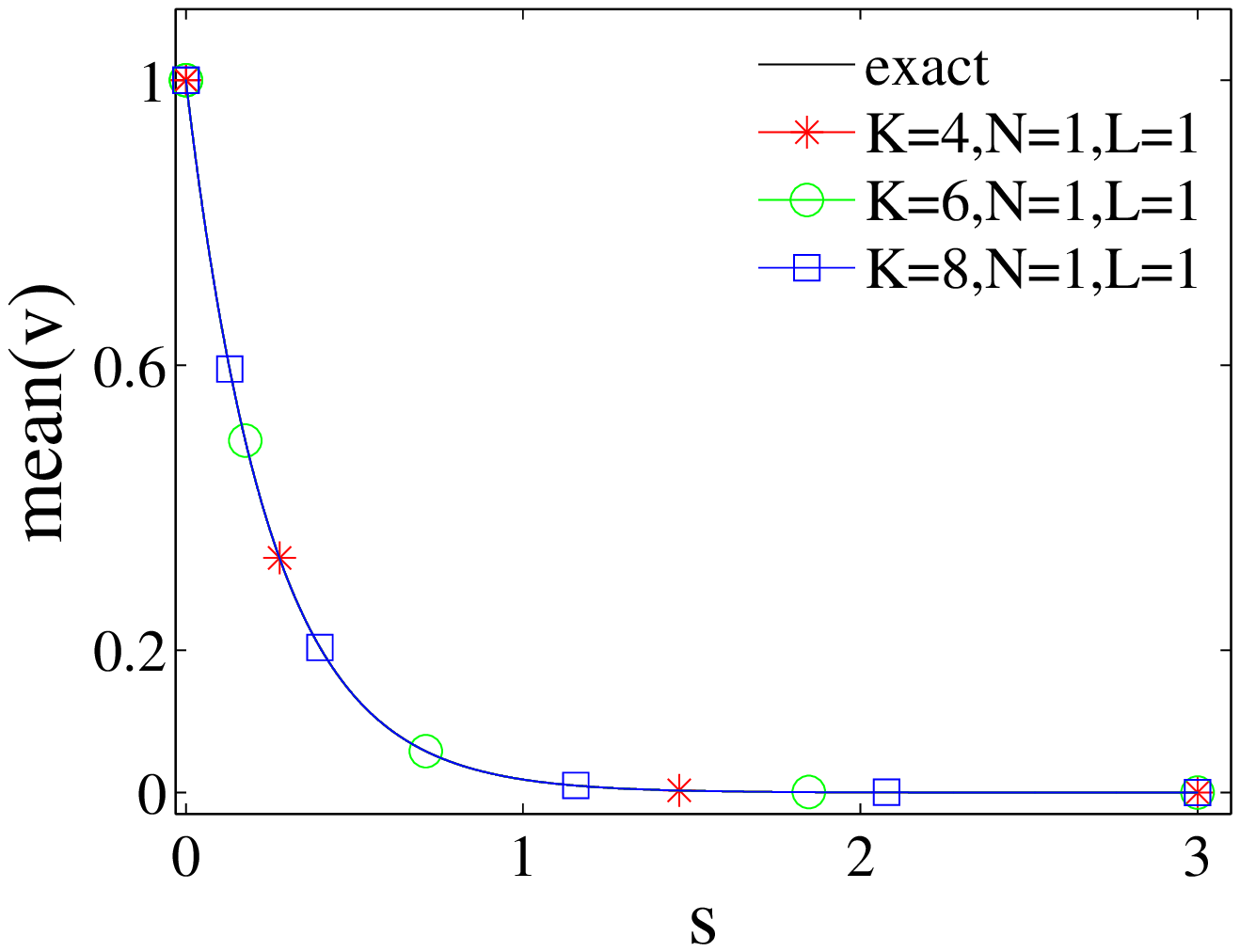}}
\subfigure[variance]{ \label{fig:Ex1_2b}
\includegraphics[scale=0.24]{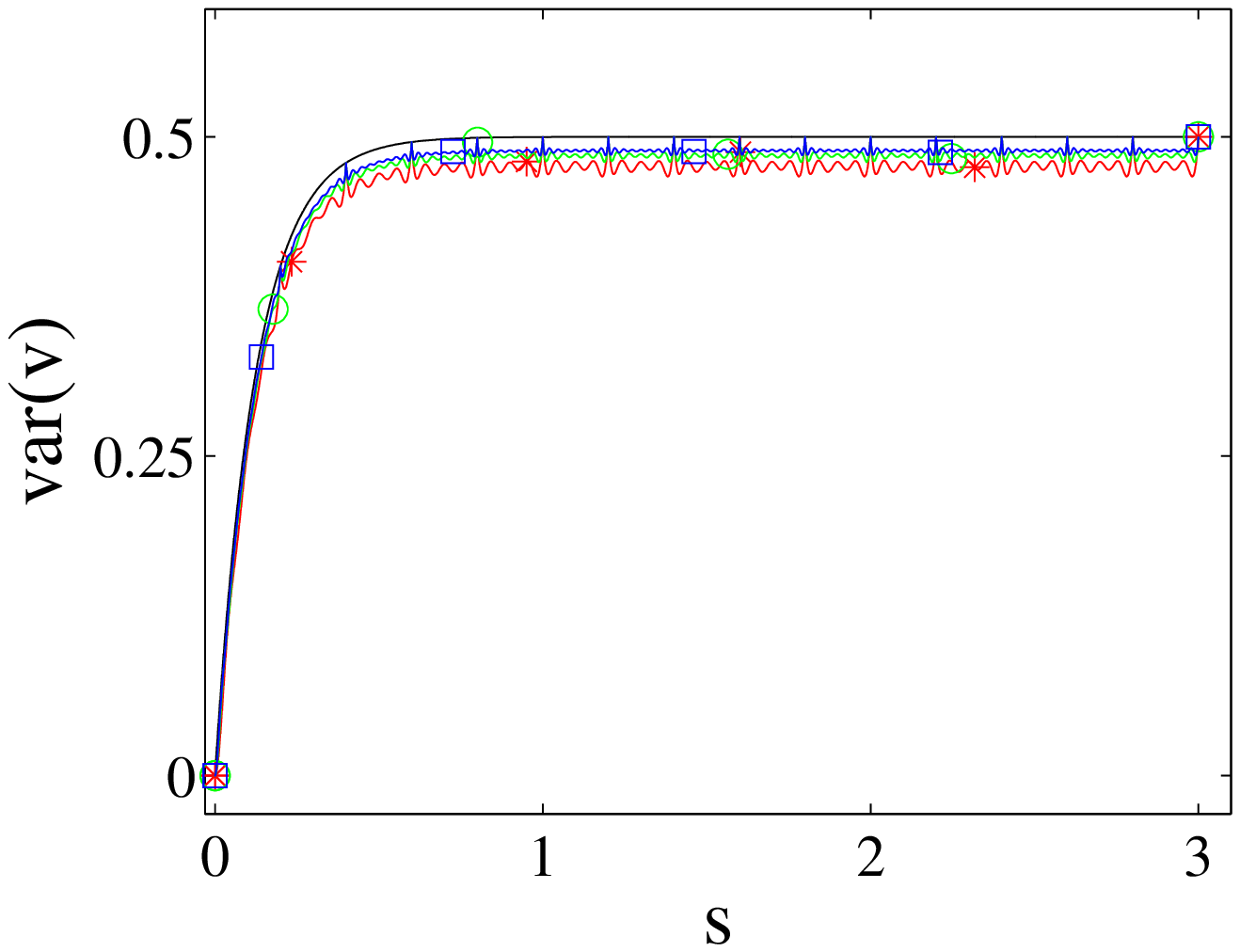}} 
\subfigure[ $\epsilon_{\mbox{mean}}$ ]{
\includegraphics[scale=0.24]{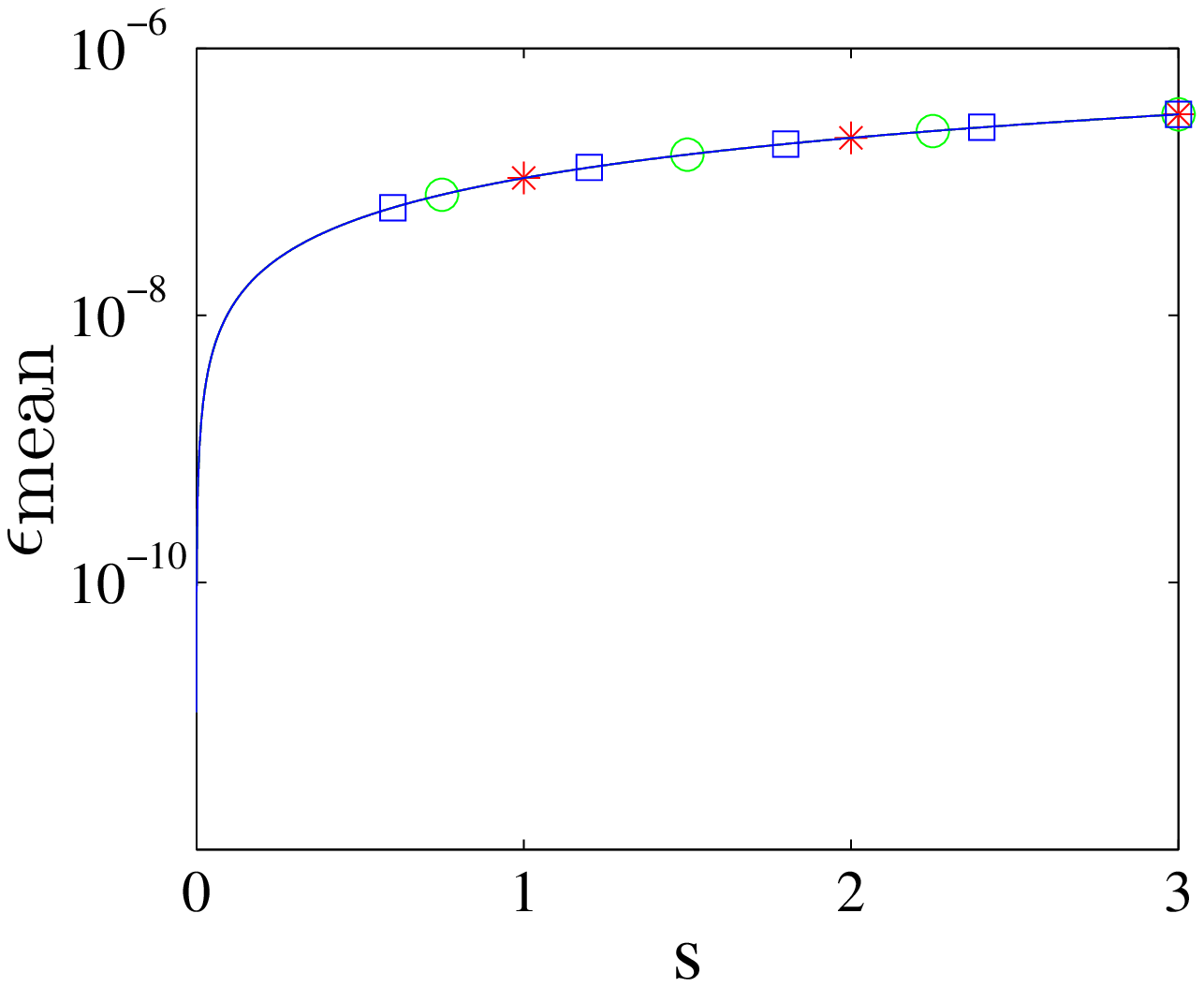}} 
\subfigure[$\epsilon_{\mbox{var}}$]{ 
\includegraphics[scale=0.24]{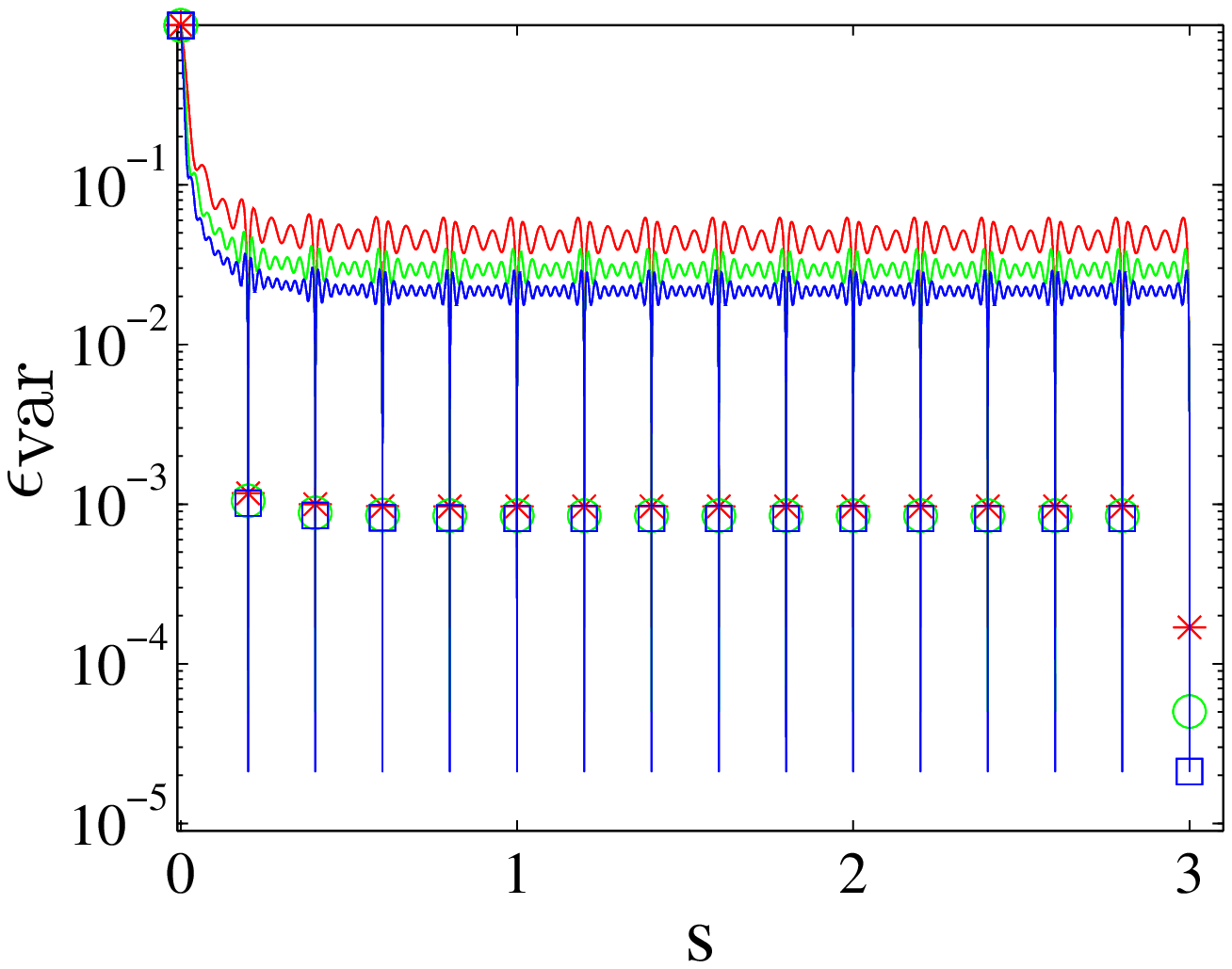}}
\vspace{-0.2cm}
\caption{DgPC $\Delta t = 0.2$.}
\label{fig:Ex1_2}
\end{figure}

In Figure \ref{fig:Ex1_3}, we illustrate an algebraic convergence of the variance  in terms of the dimension $K$ for $t=3$ and $t=15$.  DgPC uses the same size of interval $\Delta t =0.2$ for both cases.  For comparison, we also include the convergence behavior of Hermite PCE. Values of $K$ are taken as $(2,4,6,8)$  for Figures \ref{fig:Ex1_3a} and \ref{fig:Ex1_3c} and $K=(8,16,32,64)$ for Figures \ref{fig:Ex1_3b} and \ref{fig:Ex1_3d}. We deduce that our algorithm maintains an algebraic convergence of order $O(K^{-3})$ for both $t=3$ and $t=15$ whereas the convergence behavior of Hermite PCE drops from $O(K^{-3})$ to $O(K^{-2})$ as time increases. This confirms the fact that the degrees of freedom required for standard PCE to maintain a desired accuracy should increase with time.

\begin{figure}[!htb]
\centering
\subfigure[DgPC ($t=3$)]
{ \label{fig:Ex1_3a}
\includegraphics[scale=0.24]{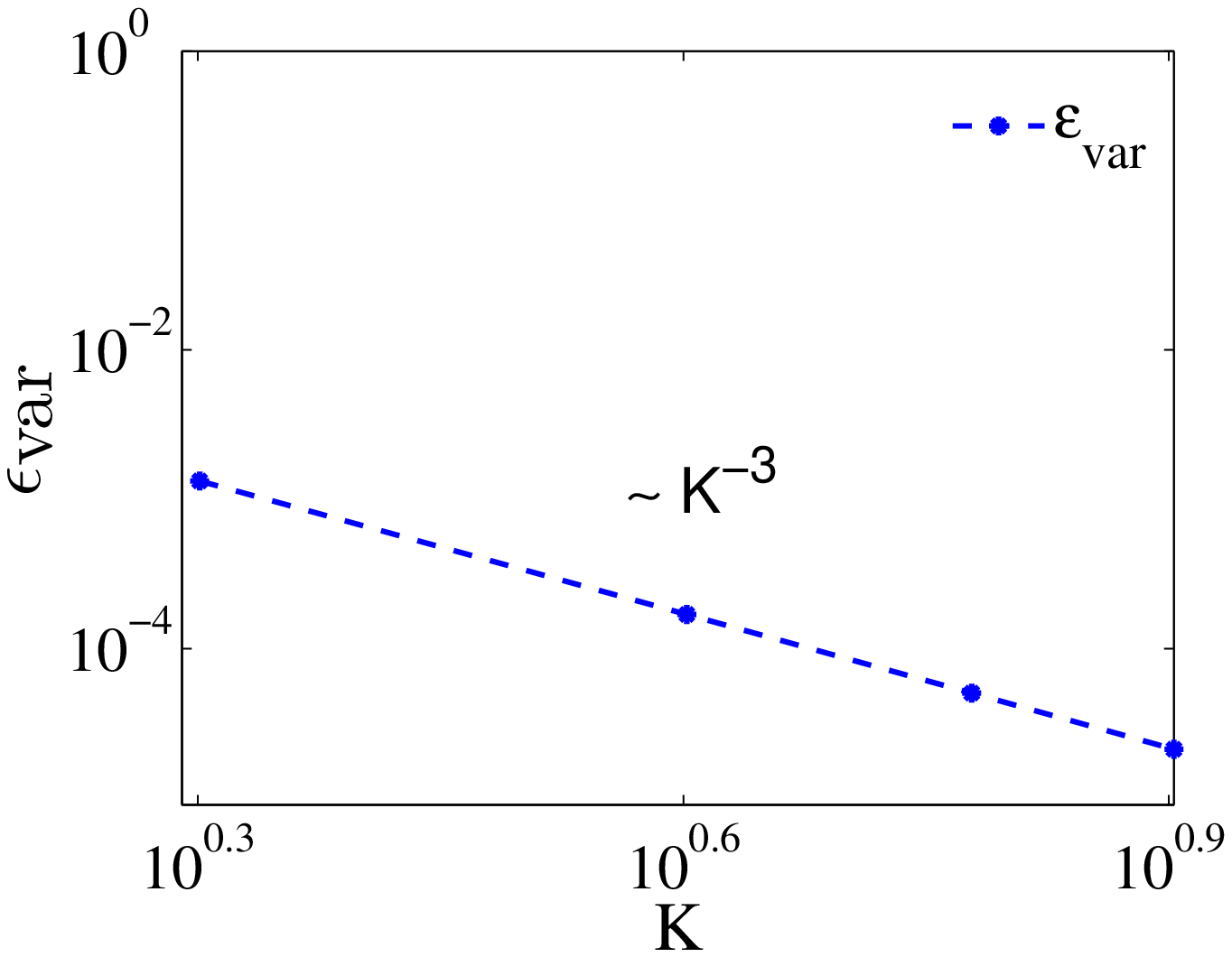}}
\subfigure[Hermite PC ($t=3$)]{\label{fig:Ex1_3b}
\includegraphics[scale=0.24]{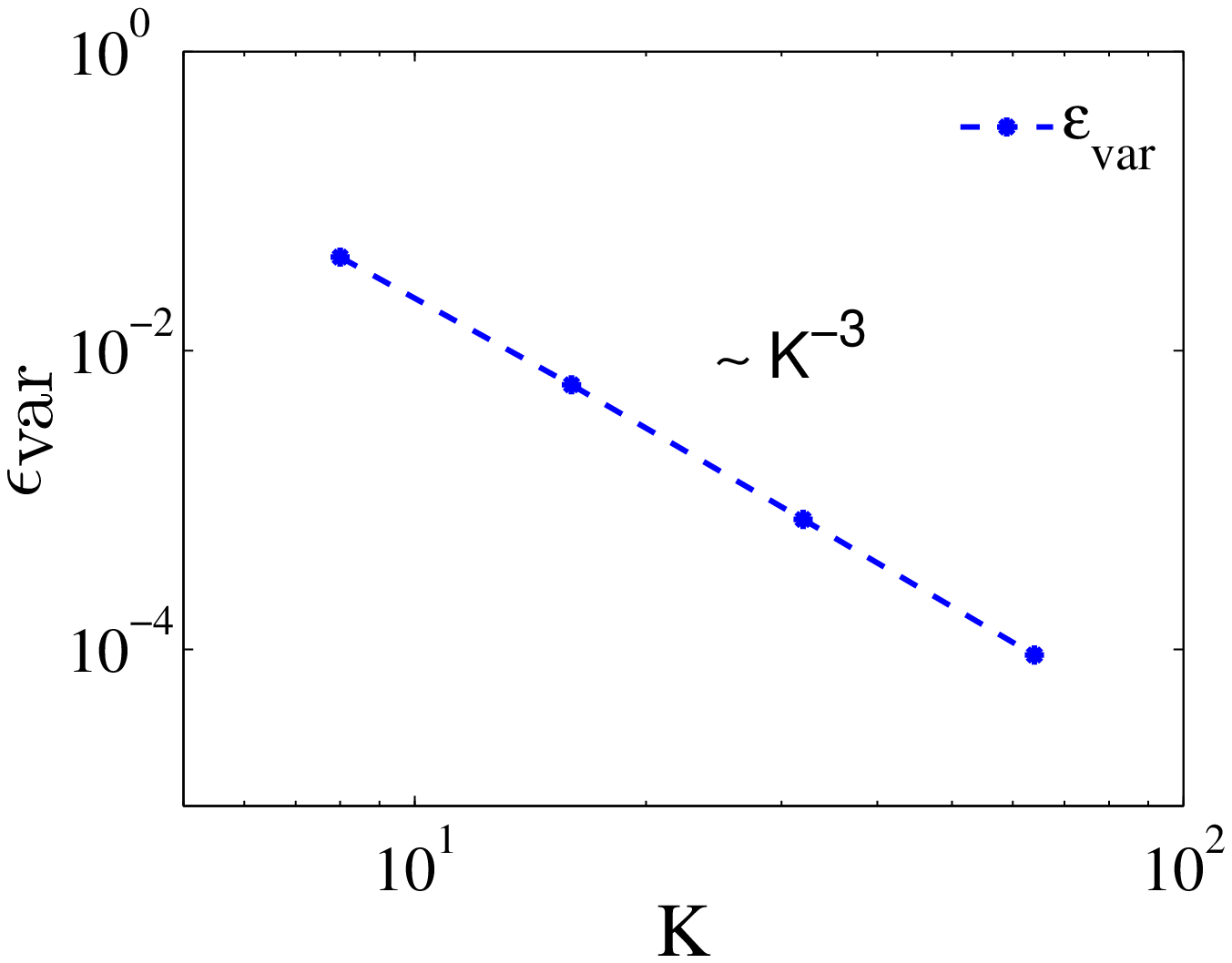}}
\subfigure[DgPC ($t=15$)]{\label{fig:Ex1_3c}
\includegraphics[scale=0.24]{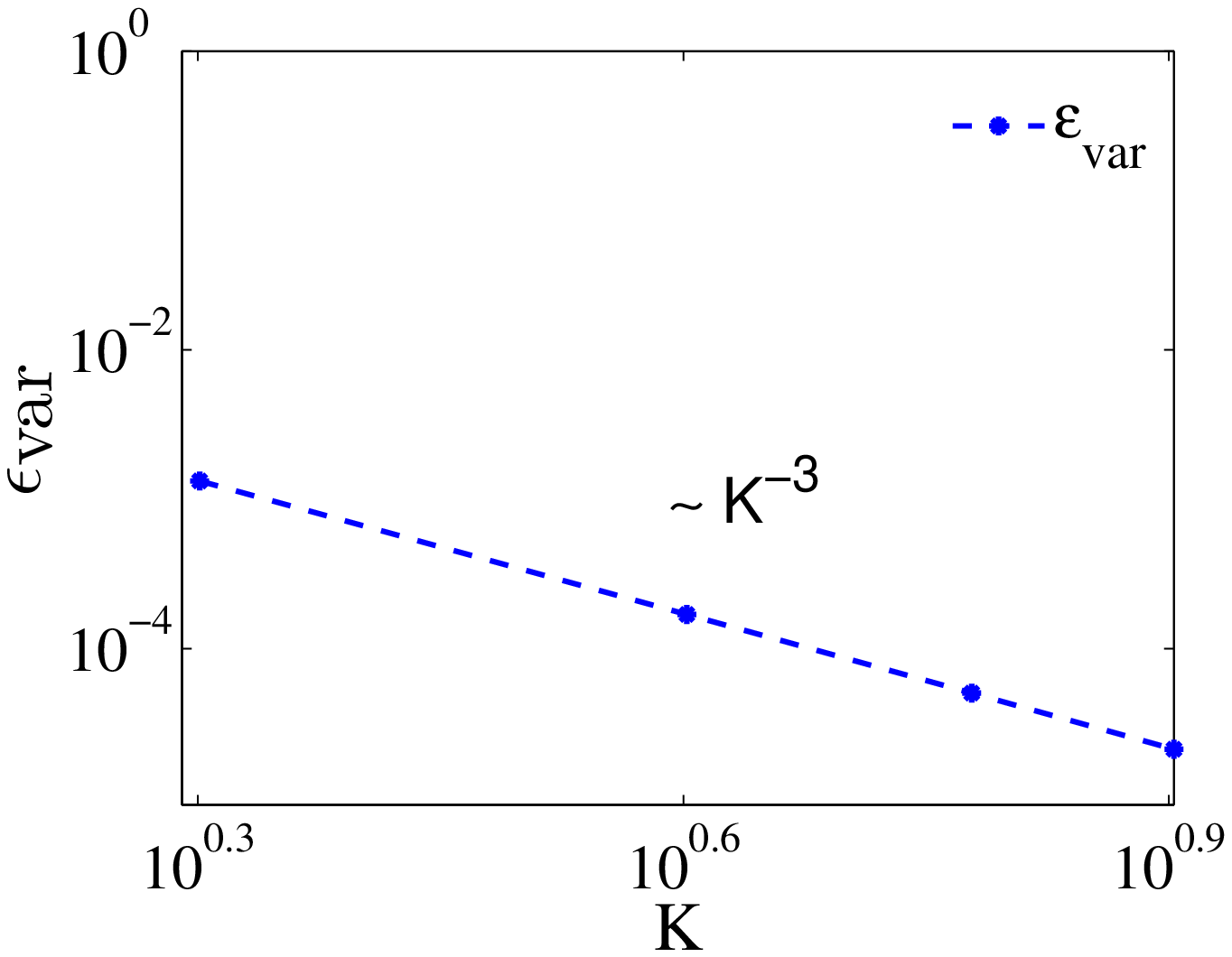}}
\subfigure[Hermite PC ($t=15$)]{\label{fig:Ex1_3d}
\includegraphics[scale=0.24]{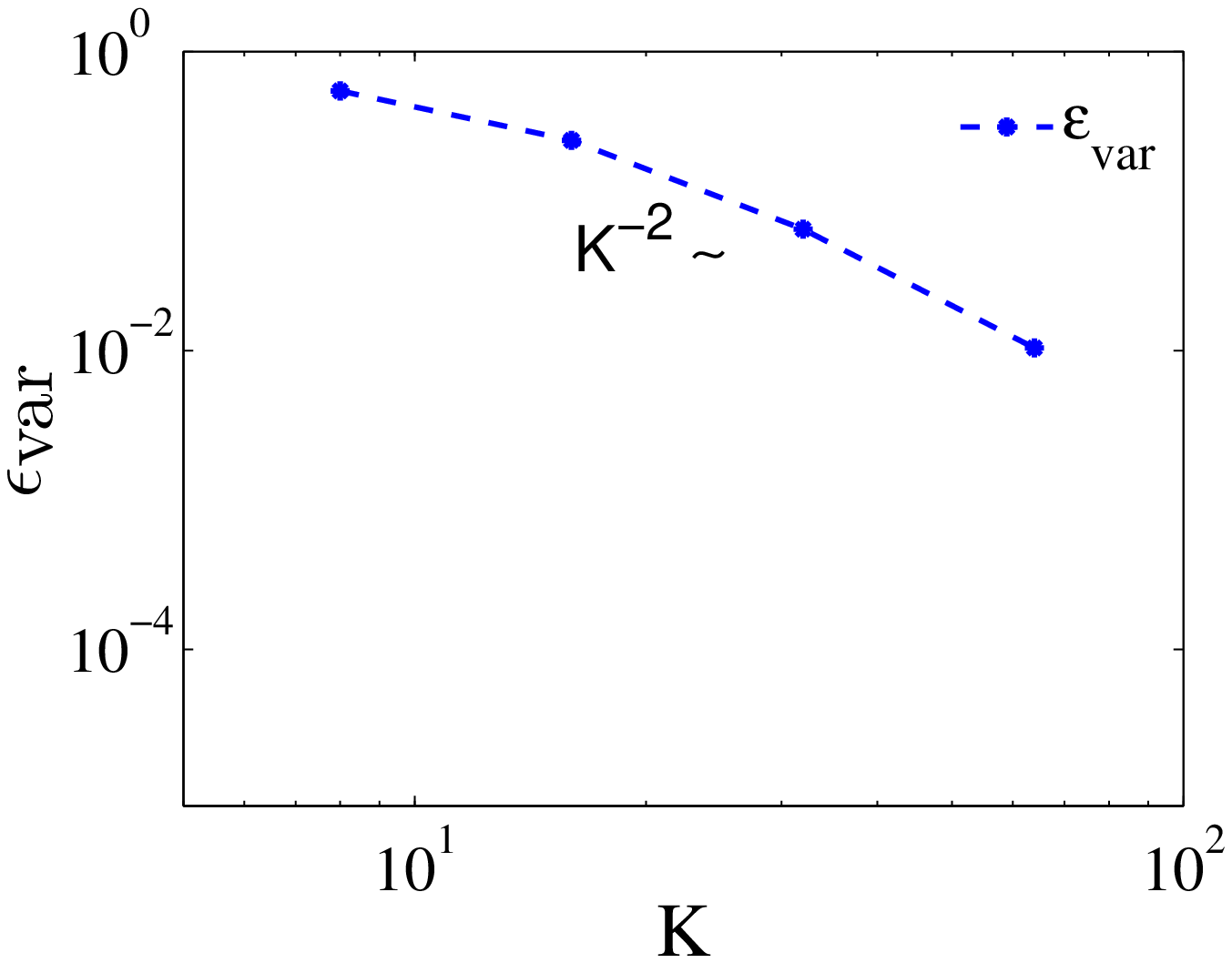}}
\vspace{-0.2cm}
\caption{K convergence.}
\label{fig:Ex1_3}
\end{figure}

\end{exmp}

 \begin{exmp} \rm \,
  We now introduce a nonlinearity in the equation so that the damping term includes a cubic component:
\begin{align*}
 d v = - (v^2 + 1)v \,ds + \sigma_v \, dW_v, \quad v(0)=1.
\end{align*}

Figure \ref{fig:Ex2_1} displays several numerical simulations for the degree of polynomials in $\bxi$ given by $N=1,2,3$ and $\sigma_v=2$. This is compared with the weak Runge--Kutta method using $N_{samp}=200000$ and $dt=0.001$. 
\begin{figure}[!htb]
\centering
\subfigure[mean]{
\includegraphics[scale=0.24]{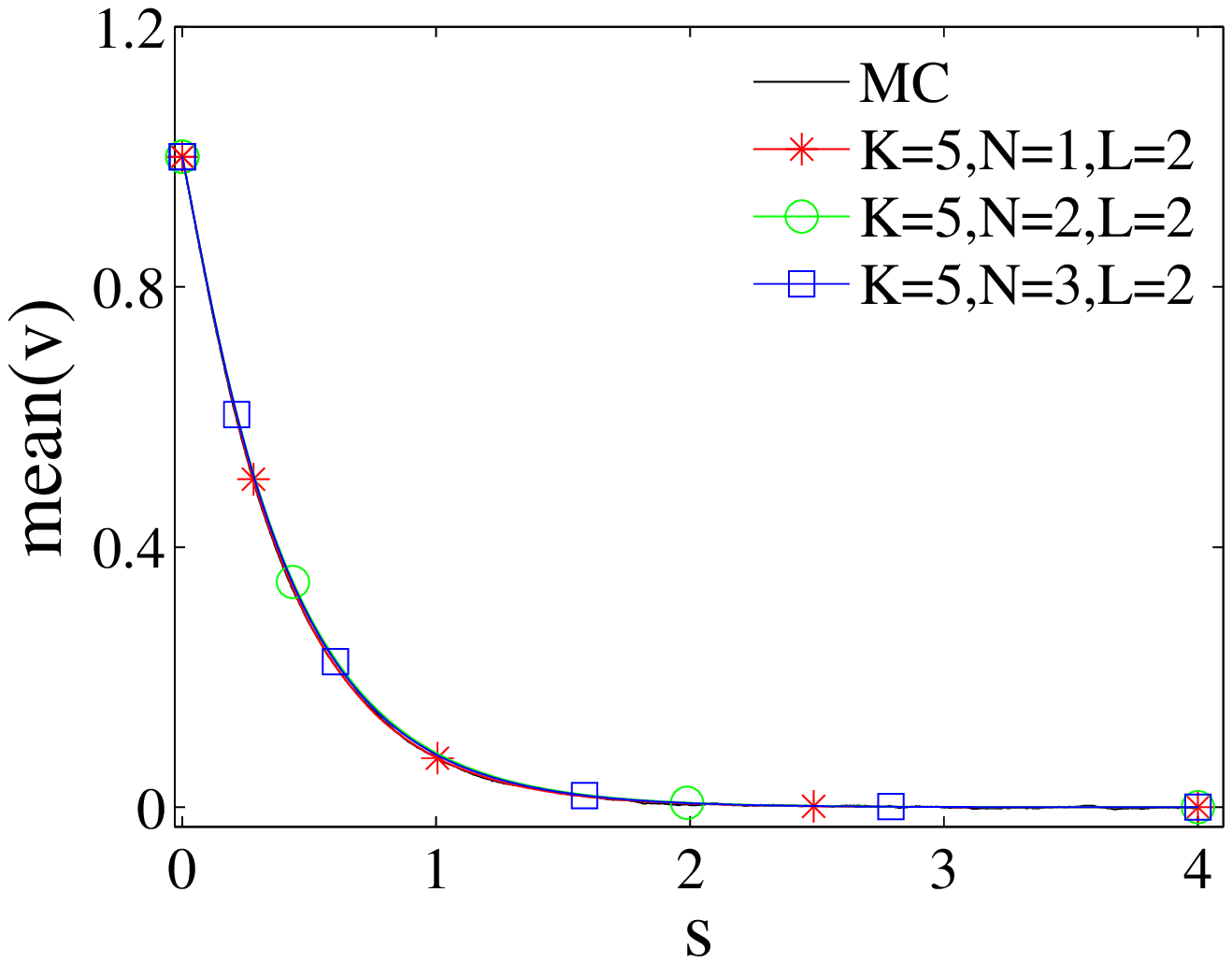}}
\subfigure[variance]{
\includegraphics[scale=0.24]{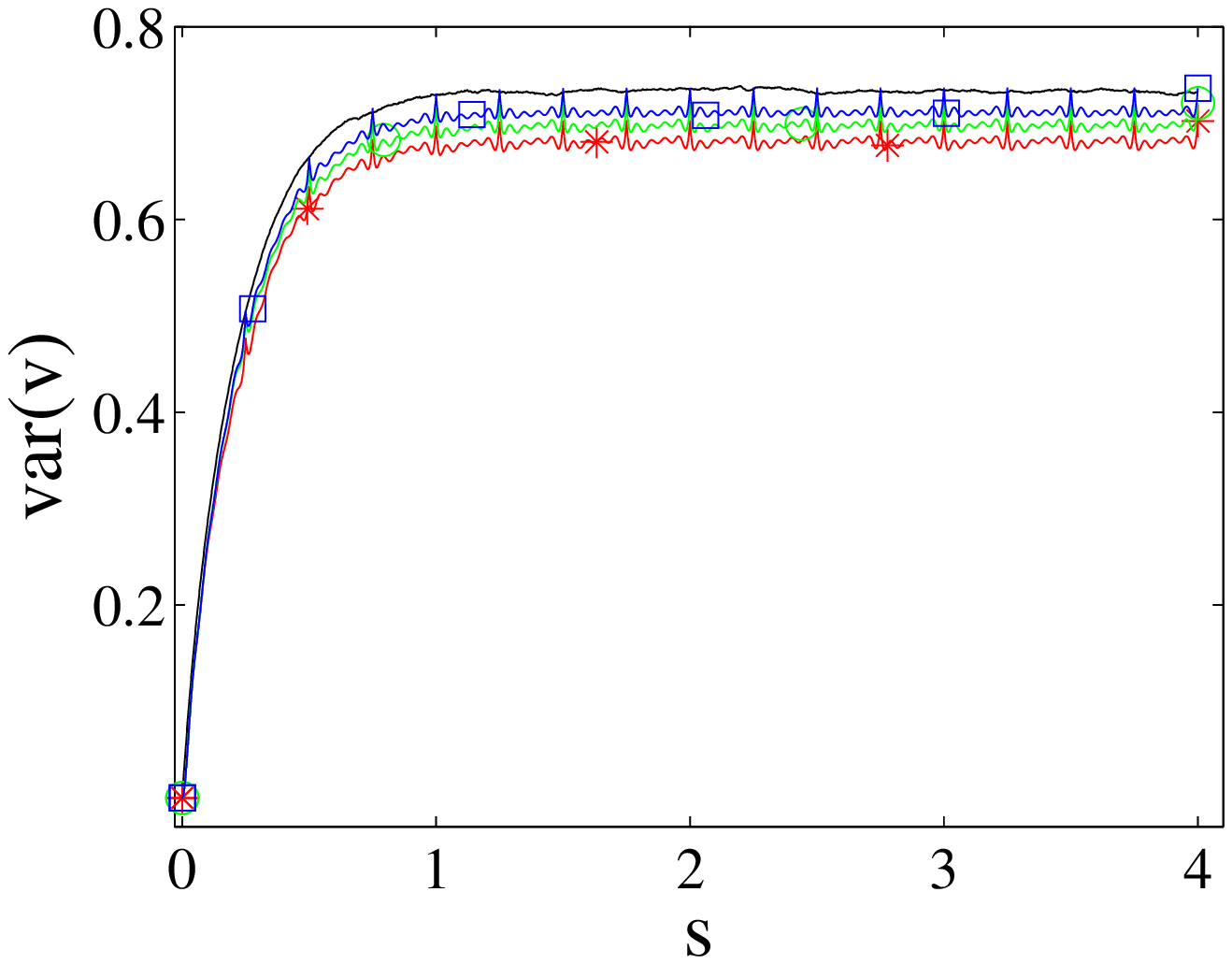}} 
\subfigure[ cumulants]{
\includegraphics[scale=0.24]{./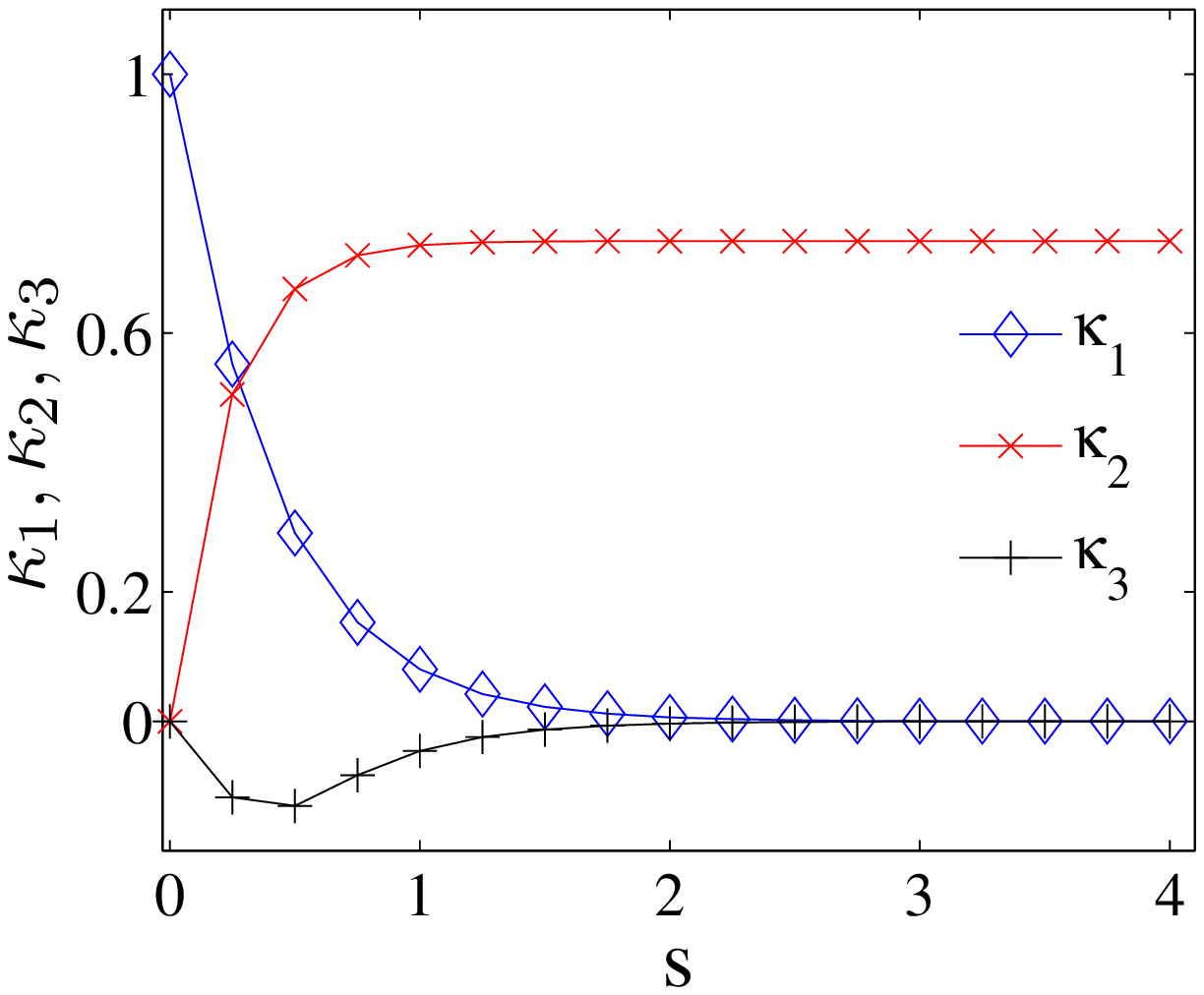}}
\subfigure[cumulants]{
\includegraphics[scale=0.24]{./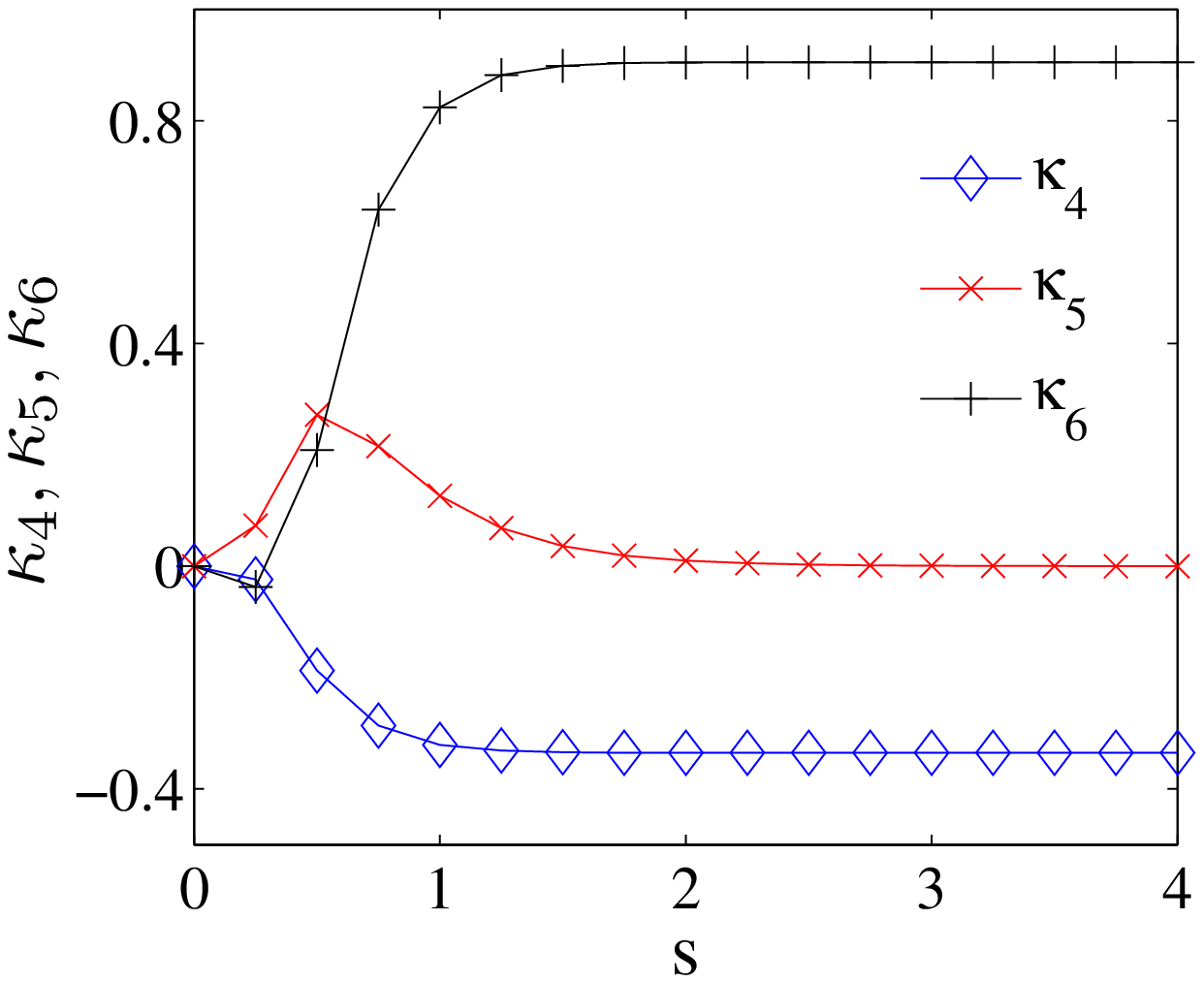}} 
\vspace{-0.2cm}
\caption{DgPC $ \Delta t =0.25$.}
\label{fig:Ex2_1}
\end{figure} 
We observe that increasing $N$ improves the accuracy of the solution to this nonlinear equation as expected. Moreover, Table \ref{table:cubic} presents a comparison for statistical cumulants between our method with $K=5$, $N=2$, $L=4$ and Monte Carlo (MC) at time $t=4$. The (stationary) cumulants of the invariant measure are also estimated by solving a standard Fokker--Planck (FP) equation \cite{OB03,F75} for the invariant measure. We conclude this example by noting that the level of accuracy of approximations in DgPC for cumulants is similar to that of the MC method. 

\begin{table}[!htb]
\begin{minipage}[b]{1\linewidth}
\centering
\renewcommand{\tabcolsep}{0.1cm}
\renewcommand{\arraystretch}{1.00}
 \begin{tabular} { |r | r | r |r|r|r|r|}
  \hline
   & $\kappa_1$&$\kappa_2$ &$\kappa_3$  & $\kappa_4$&$\kappa_5$ & $\kappa_6$\\ \hline
   DgPC & 3.58E-5  &   7.33E-1 & -2.08E-5  & -3.37E-1&  6.02E-5  &9.35E-1 \\ 
   MC & -2.53E-3&   7.33E-1    &   7.85E-4& -3.38E-1 &-3.34E-3 &  9.58E-1\\
    FP &  0&    7.33E-1 &   0 &-3.39E-1 &0  &9.64E-1\\ 
    \hline  
  \end{tabular}
  \caption{Cumulants.}
  \label{table:cubic}
\end{minipage}
\end{table}
\end{exmp}

 \begin{exmp} \rm \,
  As a third example we consider an OU process \eqref{eq:OU} in which the damping parameter is random and uniformly distributed in $[1,3]$, i.e. $b_v \sim U(1,3)$.  This is an example of non-Gaussian dynamics that may be seen as a coupled system for $(v,b_v)$ with $db_v=0$.

 We consider a time domain $[0,8]$ and divide it into $n=40$ subintervals. The initial condition is normally distributed $v_0 \sim N(1,0.04) \indep W_v$ and $\sigma_v=2$. In the next figure, we compare second order statistics obtained by our method to Monte Carlo method for which we use the Euler--Maruyama scheme with the time step $dt = 0.002$ and sampling rate $N_{samp}= 1000 \times 1000$ implying $10^6$ samples in total. We stress again that this problem is essentially two-dimensional since the damping is random. 

\begin{figure}[!htb]
\centering
\subfigure[mean]{
\includegraphics[scale=0.24]{./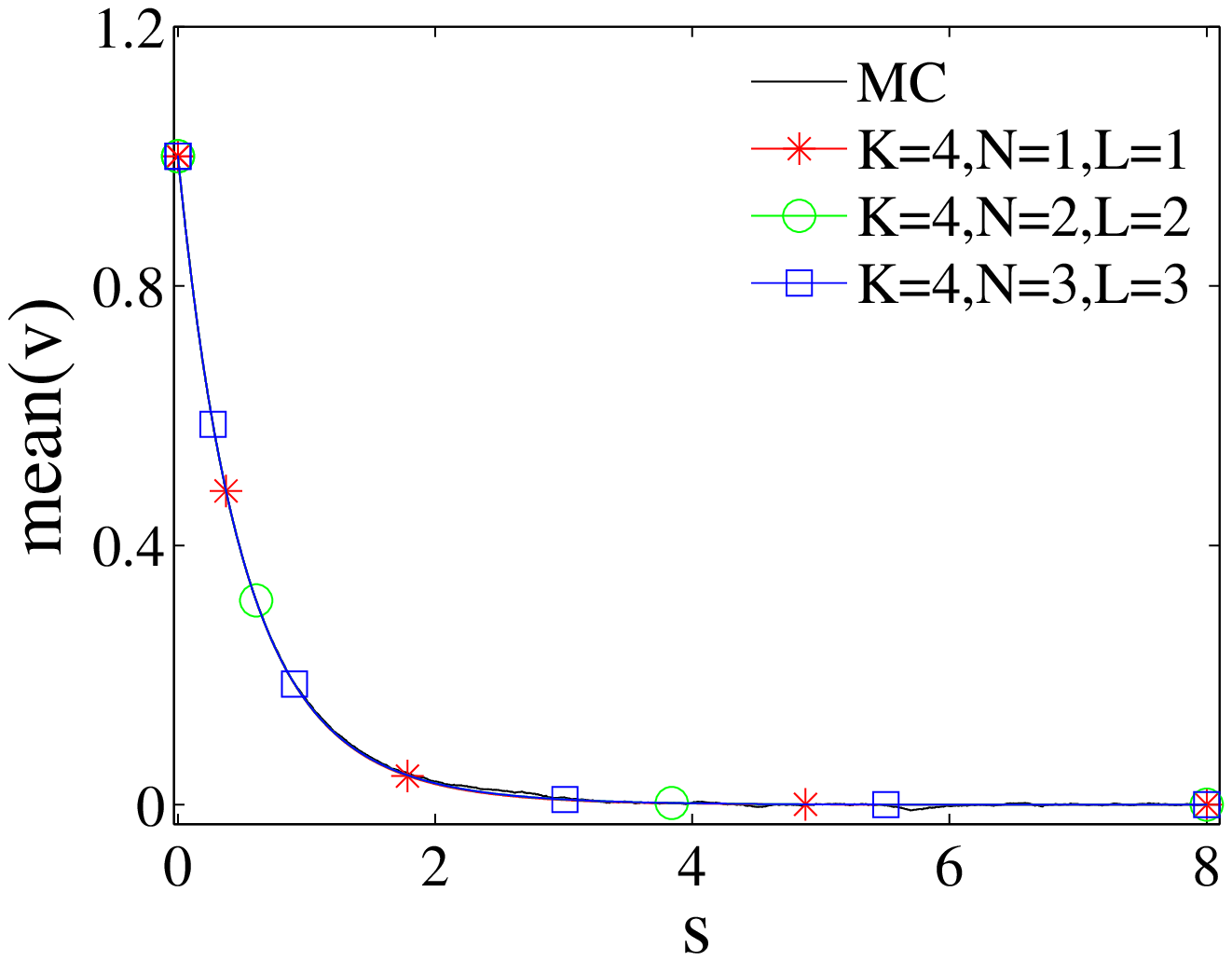}}
\subfigure[variance]{
\includegraphics[scale=0.24]{./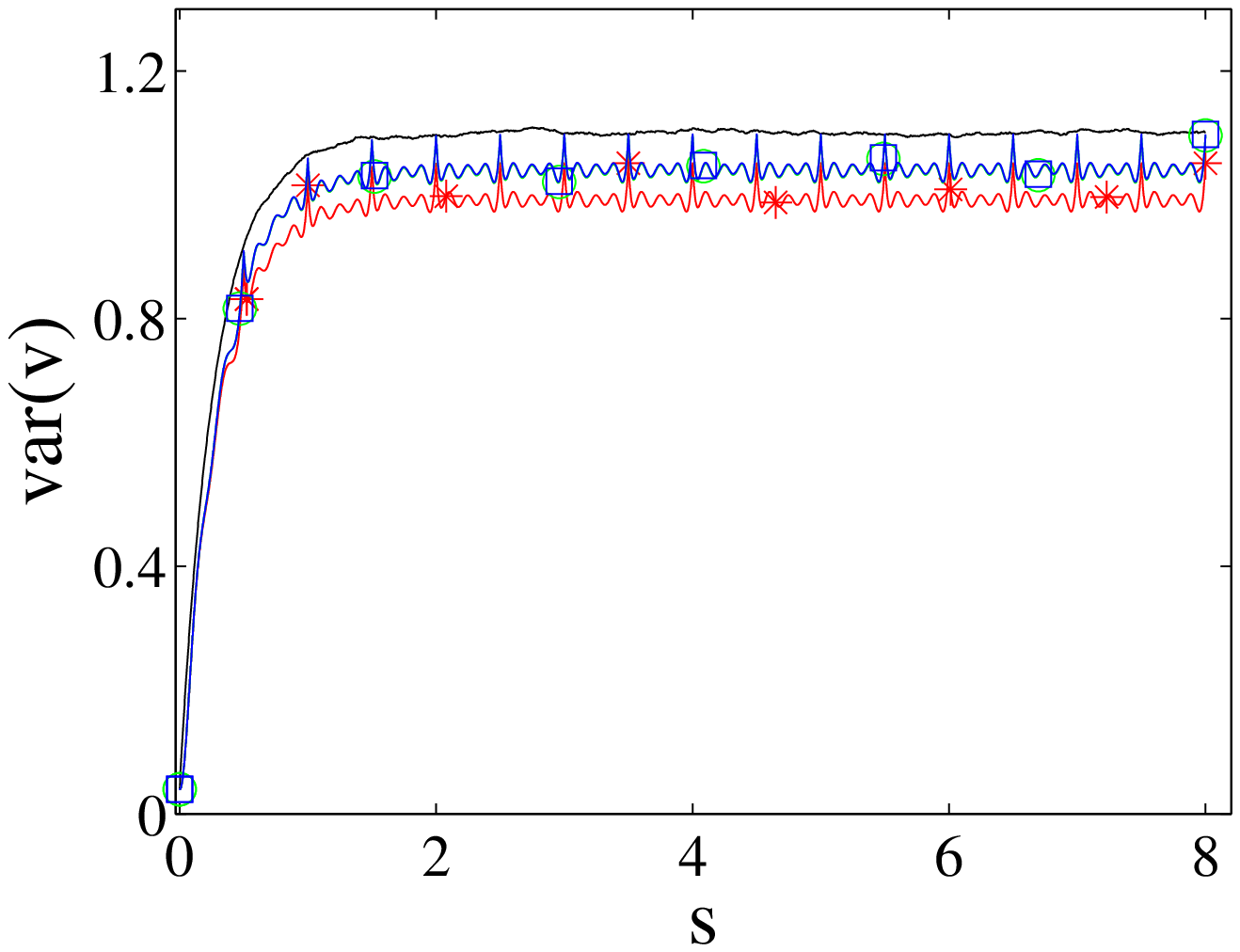}} 
\subfigure[ cumulants]{ \label{fig:Ex3_1c}
\includegraphics[scale=0.24]{./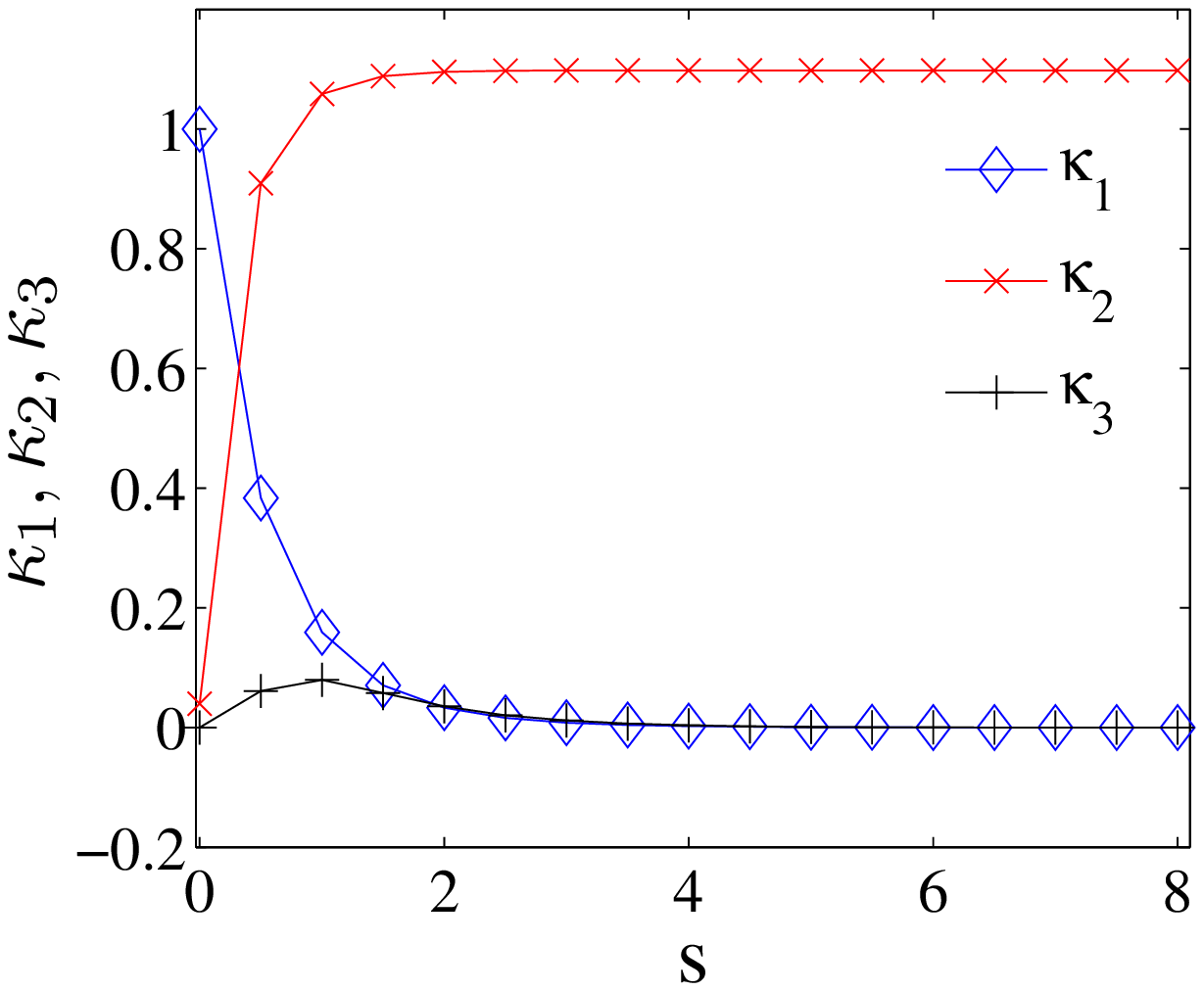}}
\subfigure[ cumulants]{ \label{fig:Ex3_1d}
\includegraphics[scale=0.24]{./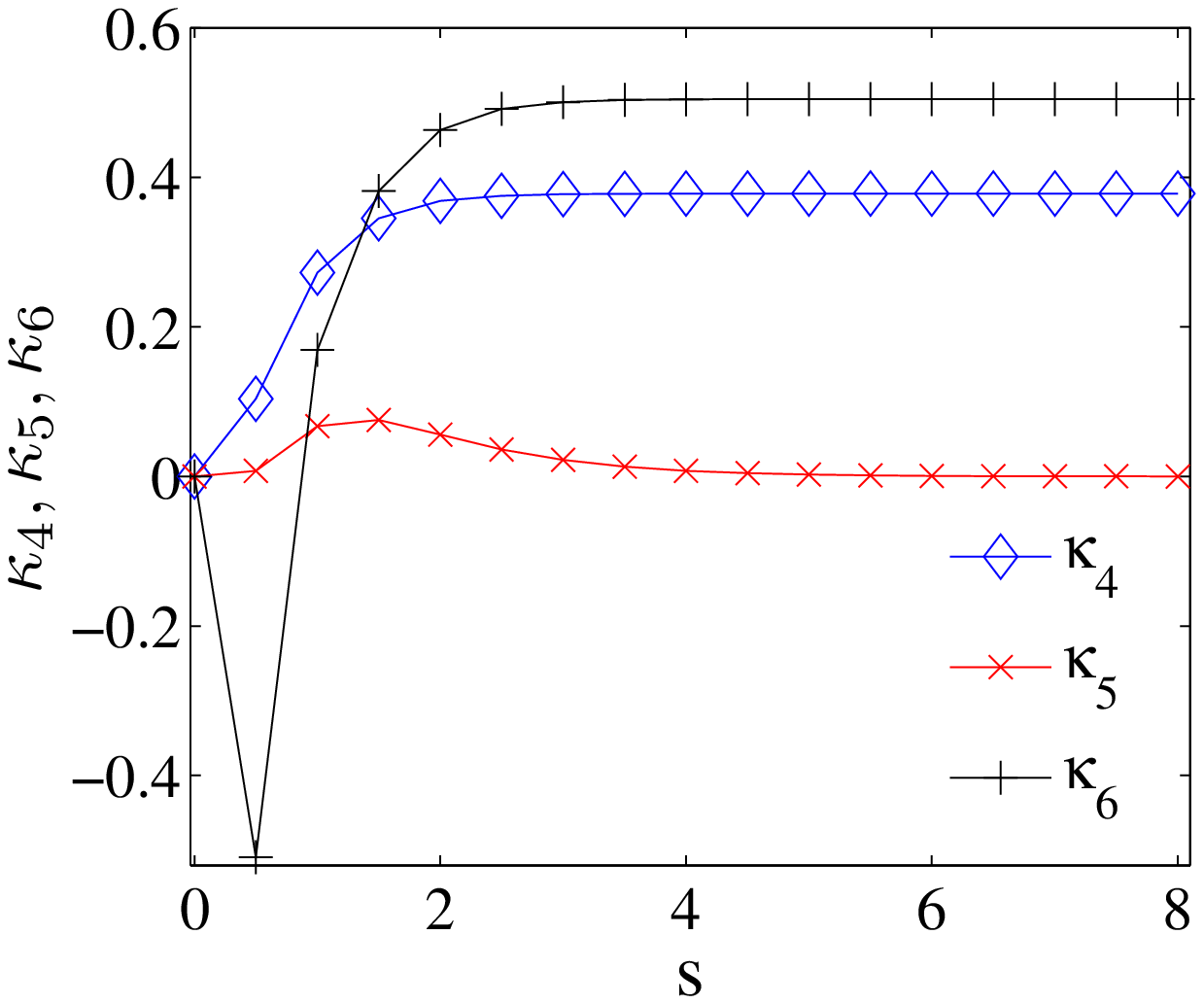}} 
\vspace{-0.2cm}
\caption{DgPC $\Delta t =0.2$.}
\label{fig:Ex3_1}
\end{figure} 

As expected, the mean decreases monotonically and is approximated accurately by the algorithm. The estimation of the variance becomes more accurate as $N$ increases. Furthermore, Figures \ref{fig:Ex3_1c} and \ref{fig:Ex3_1d} show that the cumulants become stationary for long times indicating that the numerical approximations converge to a measure which is non-Gaussian.  

Table \ref{table:Ex3} compares the first six cumulants obtained by our algorithm at time $t=8$ with $K=4$, $N=3$, $L=4$ to the Monte Carlo method. For further comparison, we also provide cumulants obtained by averaging   Fokker--Planck density with respect to the known, explicit, distribution of the damping.   It can be observed from the following table that both our algorithm and Monte Carlo capture cumulants reasonably well although the accuracy degrades at higher orders.

\begin{table}[!htb]
\begin{minipage}[b]{1\linewidth}
\centering
\renewcommand{\tabcolsep}{0.1cm}
\renewcommand{\arraystretch}{1.00}
 \begin{tabular} { |r | r | r |r|r|r|r|}
  \hline
   & $\kappa_1$&$\kappa_2$ &$\kappa_3$  & $\kappa_4$&$\kappa_5$ & $\kappa_6$\\ \hline
   DgPC & 1.68E-5 &   1.10 & 3.80E-5 & 3.78E-1 &  8.72E-5 &5.04E-1\\ 
   MC & -1.78E-4 &   1.10   &   -3.25E-3& 3.70E-1  &-6.34E-2 &  4.65E-1 \\
    FP &  0&     1.10 &   0 &3.79E-1&0  &5.29E-1\\ 
    \hline  
  \end{tabular}
  \caption{Cumulants.}
  \label{table:Ex3}
\end{minipage}
\end{table}

The above calculations provide an example of stochasticity with two distinct components: the variables $\xi$ that are Markov and can be projected out and the non-Markov variable $b_v$  that has long-time effects on the dynamics. The variables $(v,b_v)$ are strongly correlated for positive times. PCE thus need to involve orthogonal polynomials that reflect these correlations and cannot be written as tensorized products of orthogonal polynomials in $v$ and $b_v$.

\end{exmp}

 \begin{exmp} \rm \,
 We demonstrate that our method is not limited to equations forced by Brownian motion. We apply it to an equation where the forcing is a nonlinear function of Brownian motion:
\begin{align*}
 d v(s) = - b_v \, v(s)\, ds + \sigma_v \, d(W^2(s)-s),\quad v(0)=1,  
\end{align*}
with parameters $b_v=6$,  $\sigma_v=1$. To depict the effect of  different choices of number of restarts, we take  $\Delta t =0.5,0.3,0.1$ using $K=5$, $N=2$, $L=2$ in each expansion.

\begin{figure}[!htb]
\centering
\subfigure[mean]{
\includegraphics[scale=0.24]{./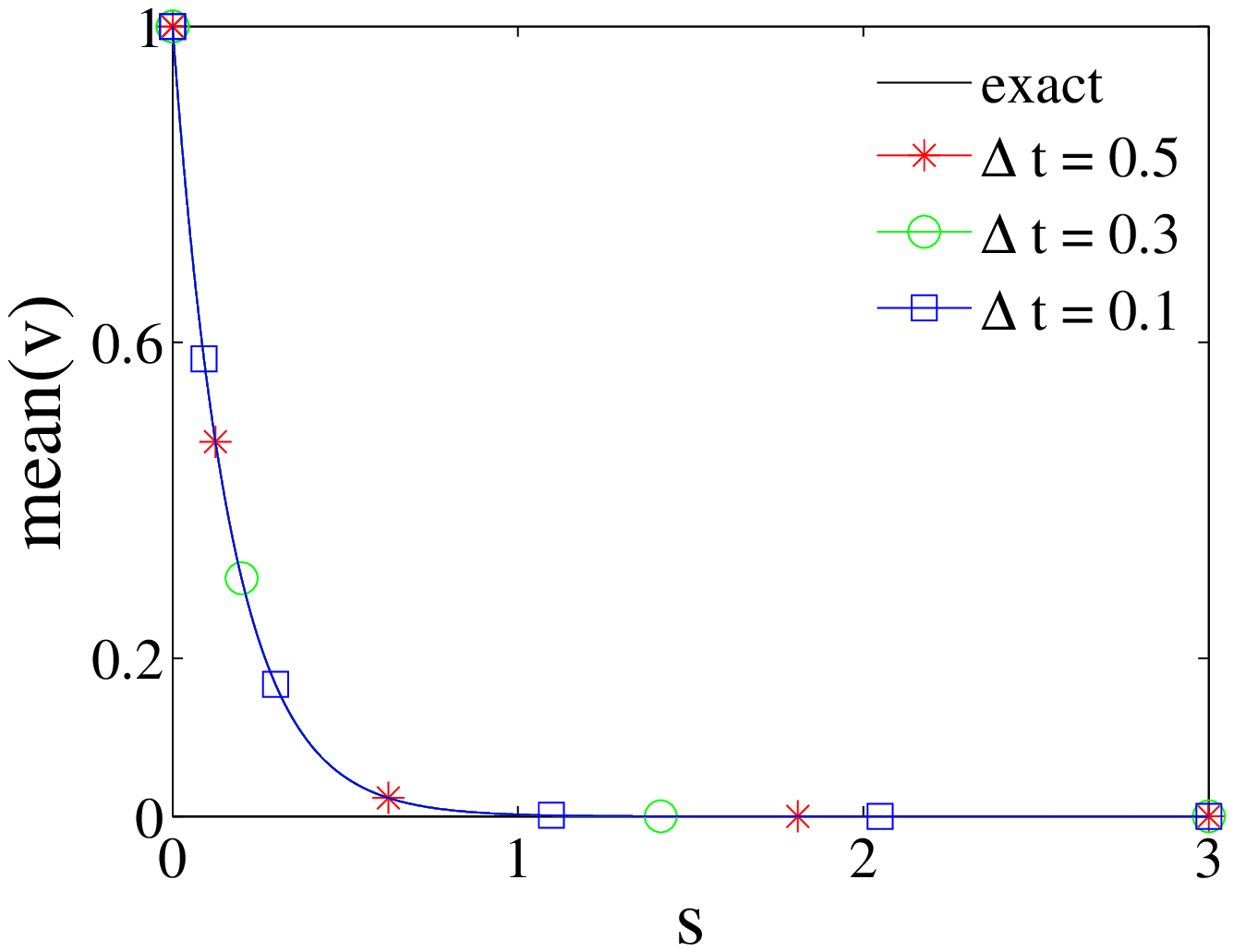}}
\subfigure[variance]{
\includegraphics[scale=0.24]{./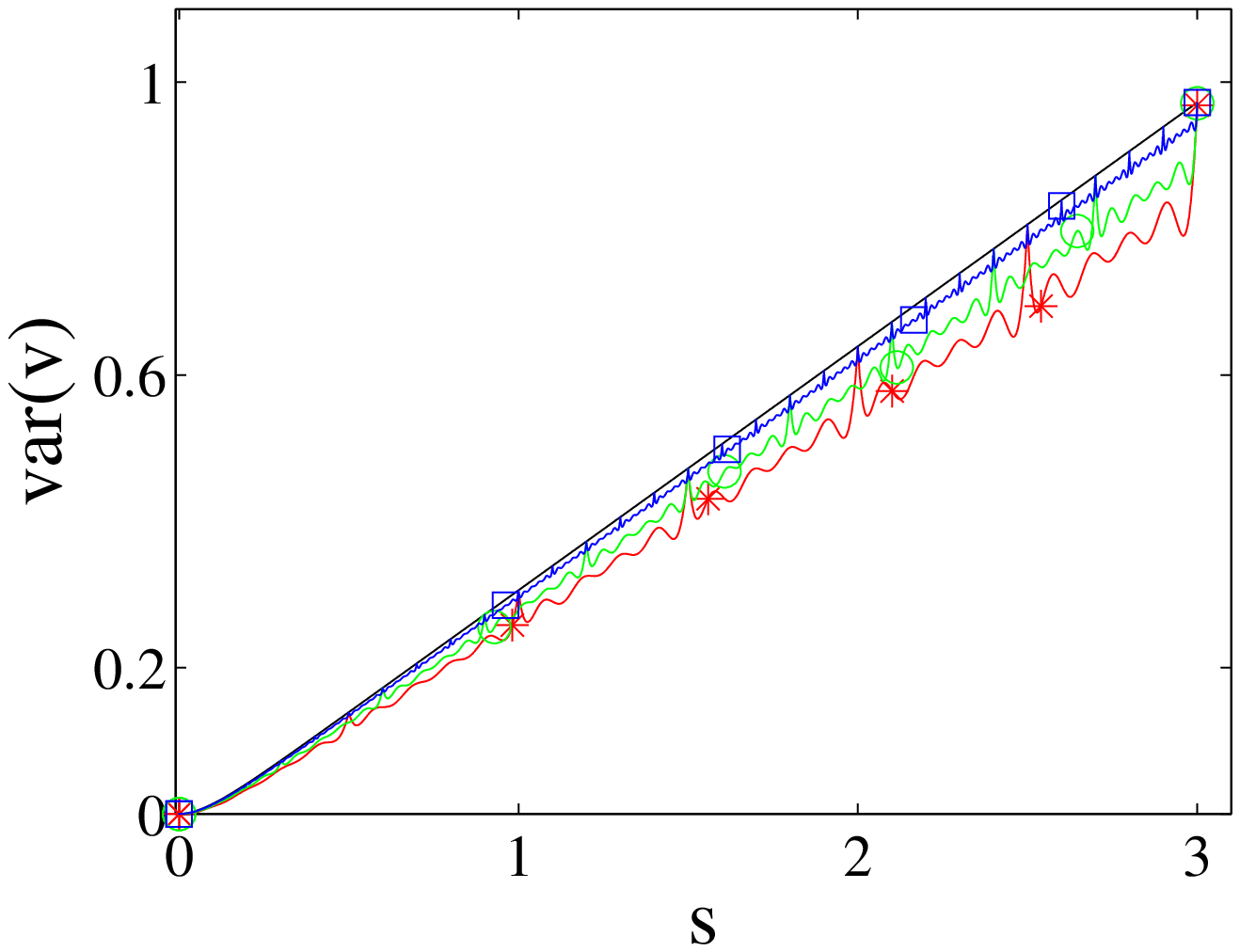}} 
\subfigure[$\epsilon_{\mbox{mean}}$]{
\includegraphics[scale=0.24]{./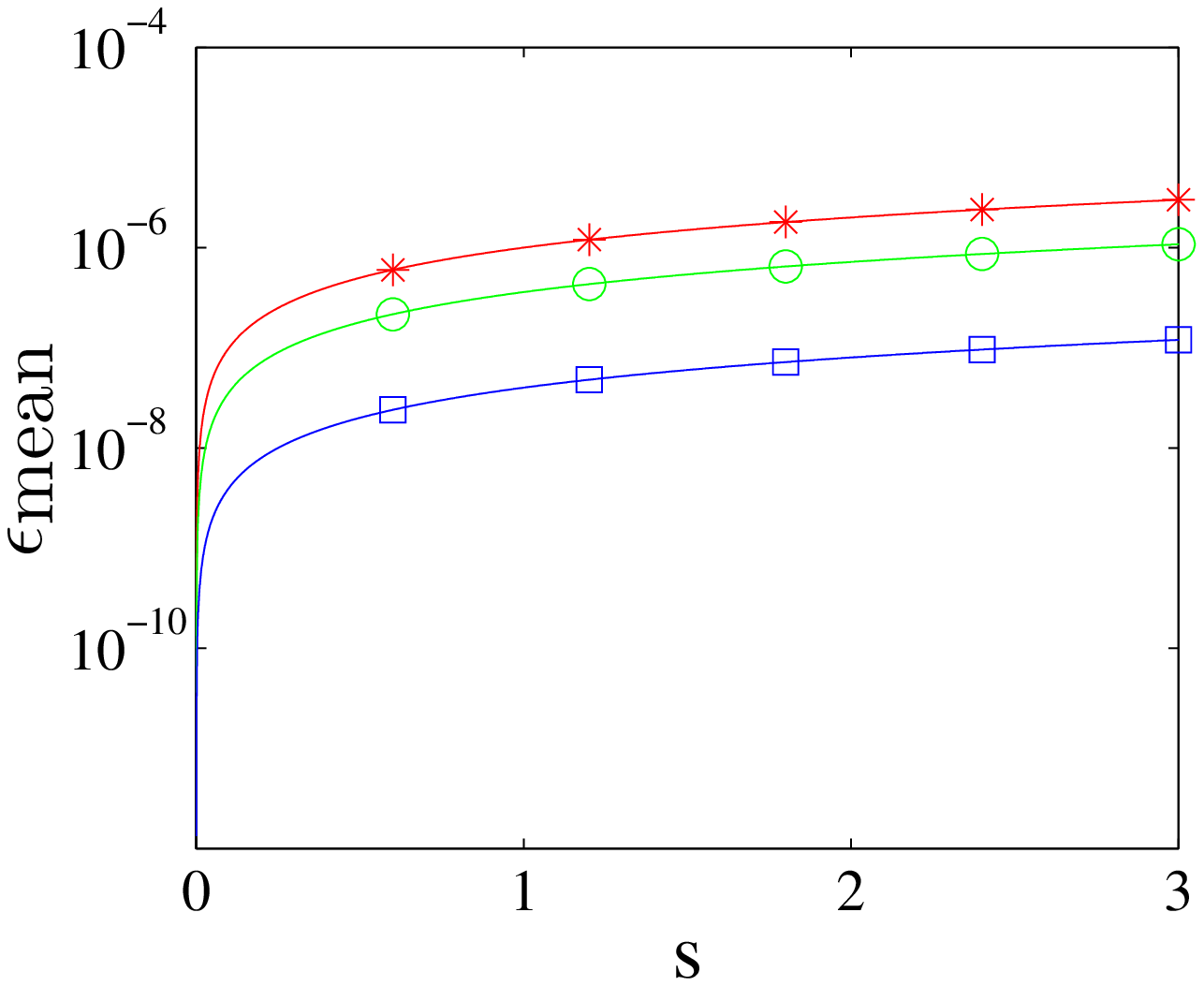}}
\subfigure[ $\epsilon_{\mbox{var}}$]{
\includegraphics[scale=0.24]{./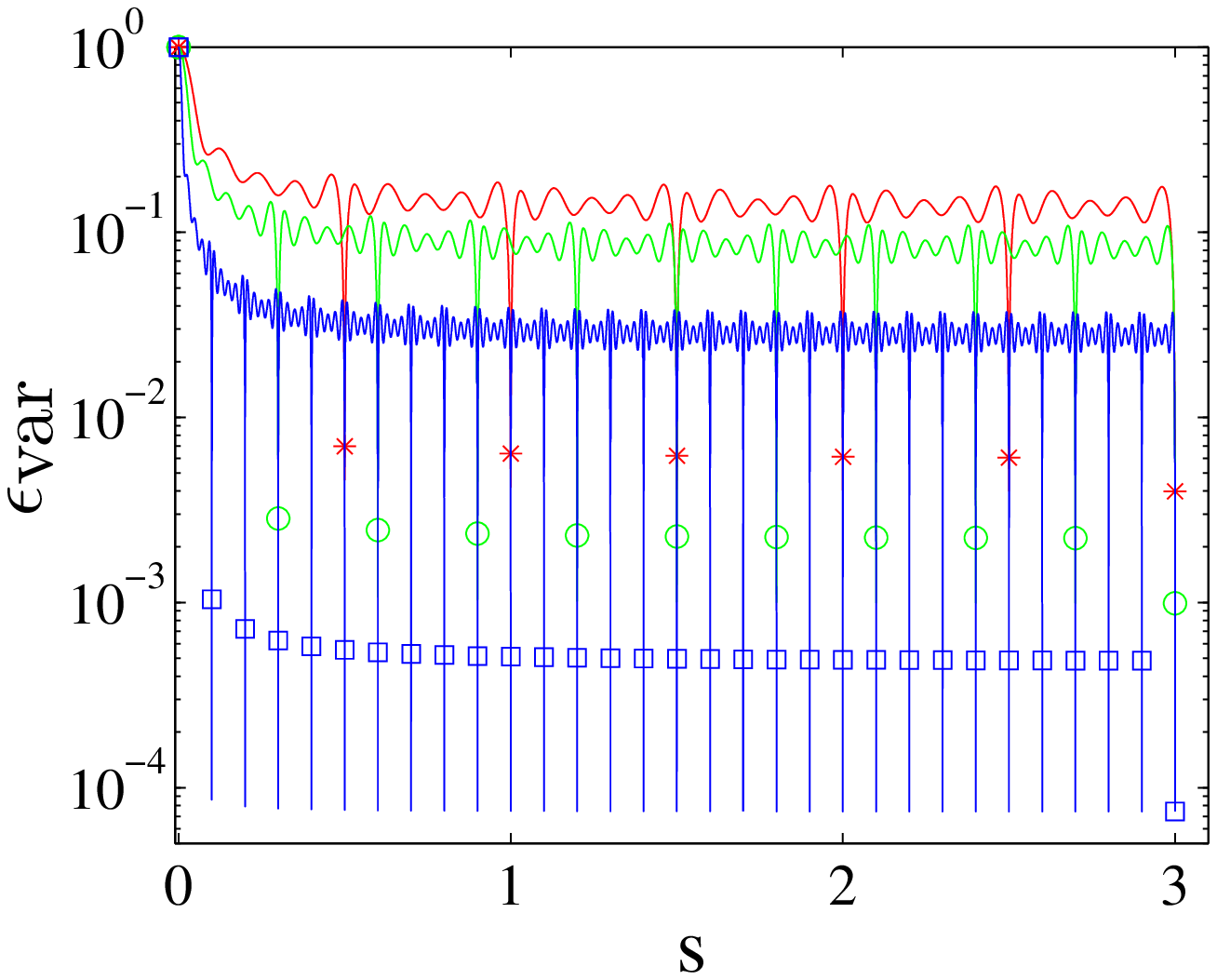}} 
\vspace{-0.2cm}
\caption{DgPC $\Delta t= 0.5,0.3,0.1$.}
\label{fig:Ex4}
\end{figure} 

From Figure \ref{fig:Ex4}, we observe that second order statistics are captured accurately and an increasing number of restarts provides better approximations as anticipated.  Although the system does not converge to a steady state (variance of $v(s)$ increases linearly), this example illustrates that DgPC  is able to capture behaviors of solutions where the forcing is a nonlinear function of Wiener process. 

\end{exmp}

 \begin{exmp} \rm \,
This example concerns the nonlinear system  \eqref{eq:system}, where the dynamics of $u$ exhibit intermittent non-Gaussian behavior. A time dependent deterministic periodic forcing is considered with the second equation being an OU process, i.e. $a_v=0$ in \eqref{eq:system}, and the parameters are taken as
$
b_u =1.4$,  $b_v=10$ , $\sigma_u =0.1$, $\sigma_v =10$ , $a_u=1$, 
 $f(t)= 1 + 1.1 \cos(2t+1)+0.5\cos(4t)$,
 with initial conditions $
u_0 = N(0, \sigma_u^2/8b_u) \indep v_0 = N(0, {\sigma_v^2/8b_v})
$; see also~\cite{BM13}.
The initial variables are independent of each other and of the stochastic forcing. Intermittency is introduced by using an OU process, which acts as a multiplicative noise fluctuating in the damping. The second order statistics for this case ($a_v=0$) can be derived analytically~\cite{GHM10} and we plot them in black in the following figures using quadrature methods with sufficiently high number of quadrature points.

We first depict the results for $u$ obtained by standard Hermite PCE in the first row of Figure \ref{fig:ex6_1}. It can be observed that truncated PC approximations are accurate for short times. However, they quickly lose their accuracy as time grows.  This is consistent with the simulations presented in~\cite{BM13}. 
The second row of the same figure shows that the accuracy is vastly improved in DgPC using $\Delta t =0.1$, and second order statistics are accurate up to three or four digits. Moreover, errors in the variance oscillate around $O(10^{-3})$  for $L=3$ throughout the time length suggesting that the approximation retains its accuracy in the long term, which is consistent with theoretical predictions. Our algorithm easily outperforms standard PCE in capturing the long-time behavior of the nonlinear coupled system \eqref{eq:system} in this scenario.
 
\begin{figure}[!htb]
\centering
\subfigure[mean(u)]{
\includegraphics[scale=0.24]{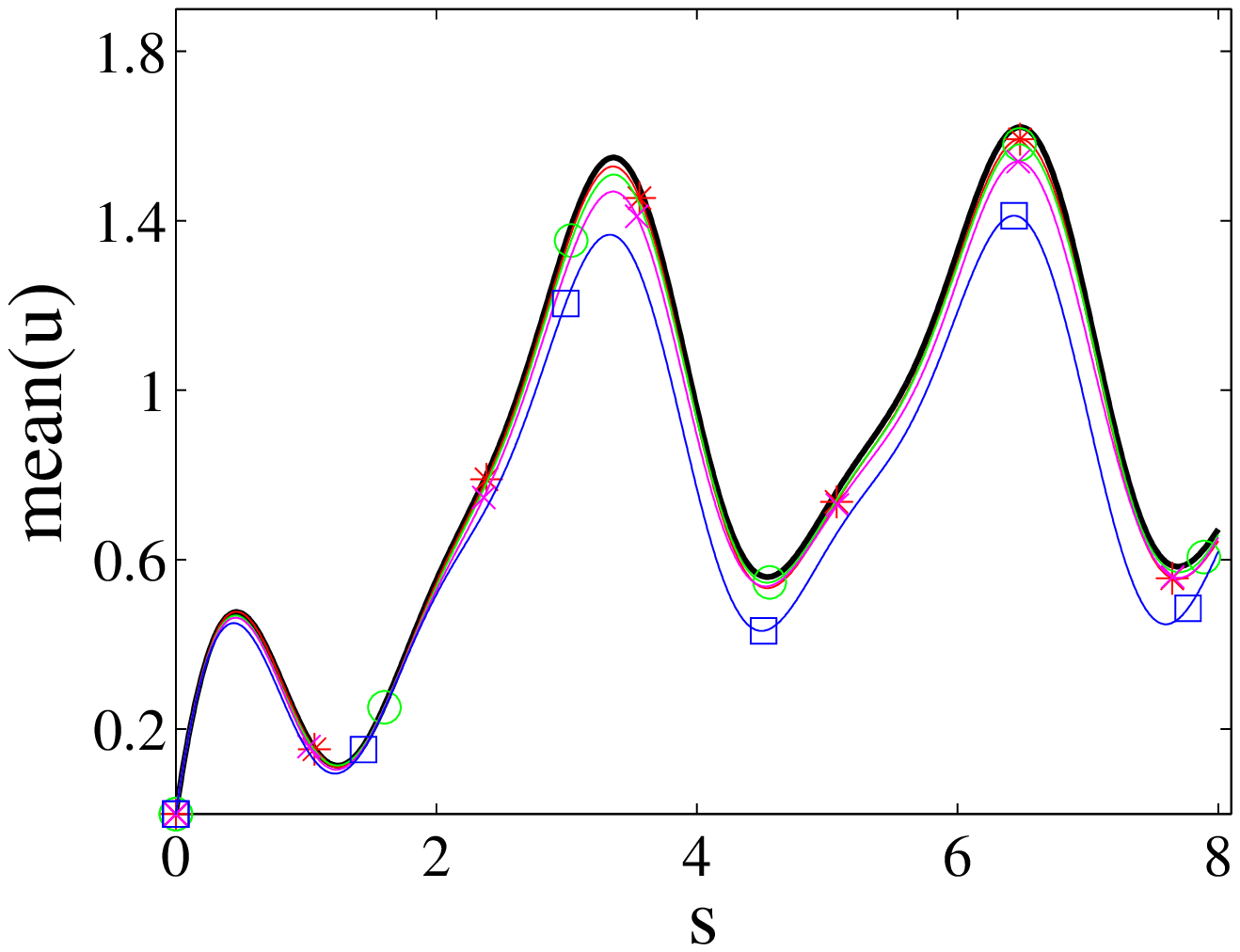}}
\subfigure[var(u)]{
\includegraphics[scale=0.24]{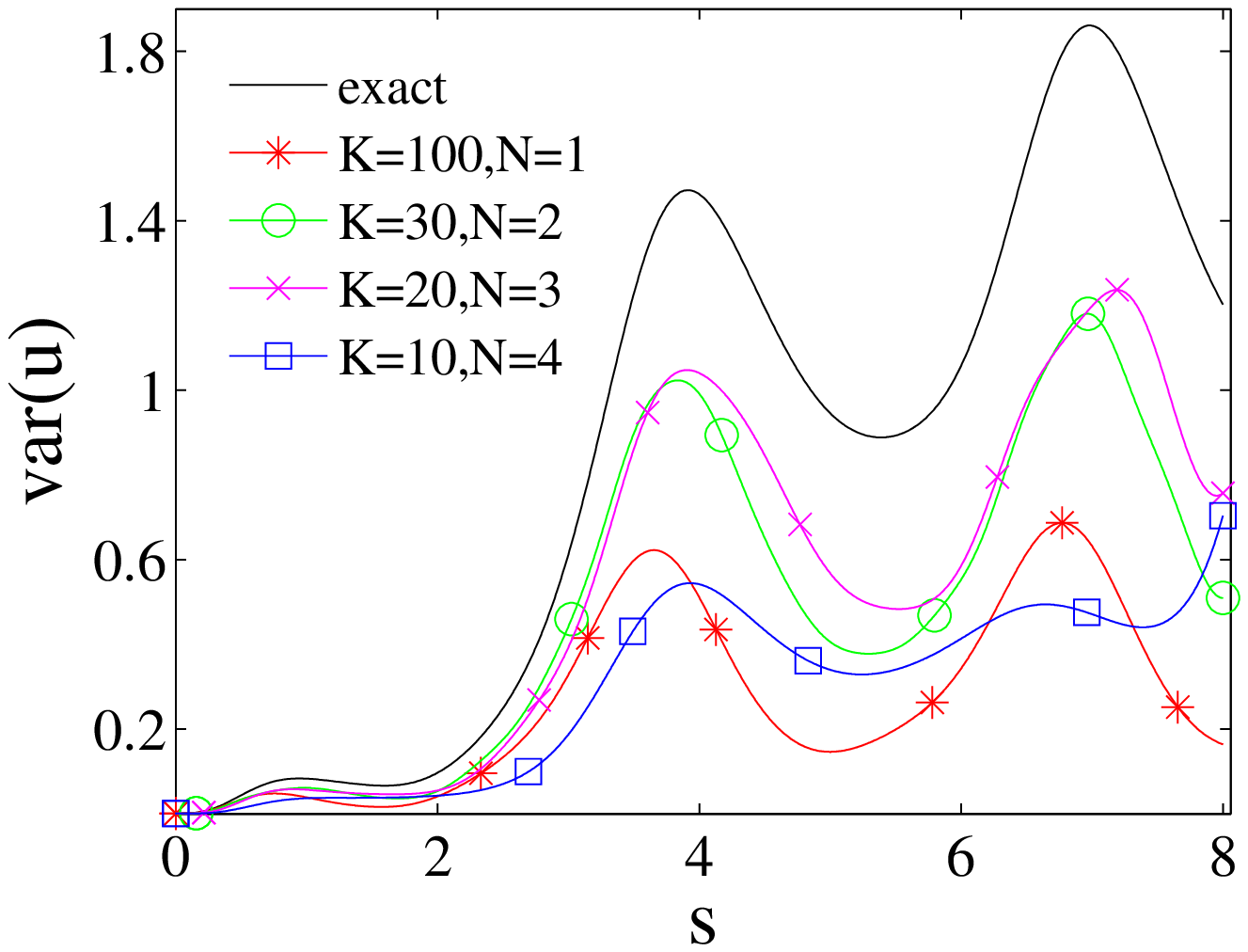}}
\subfigure[$\epsilon_{\mbox{mean}}$]{
\includegraphics[scale=0.24]{./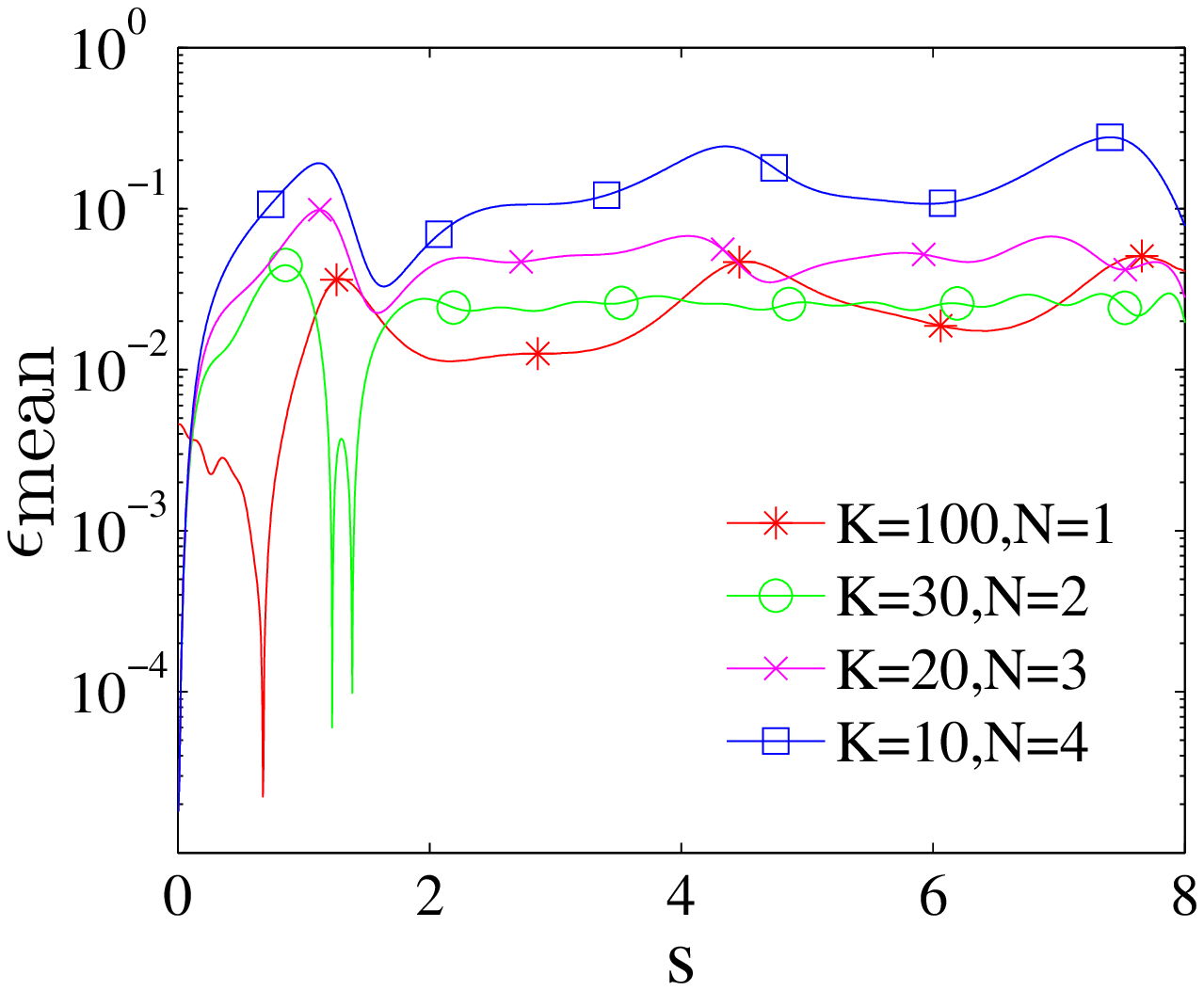}}
\subfigure[$\epsilon_{\mbox{var}}$]{
\includegraphics[scale=0.24]{./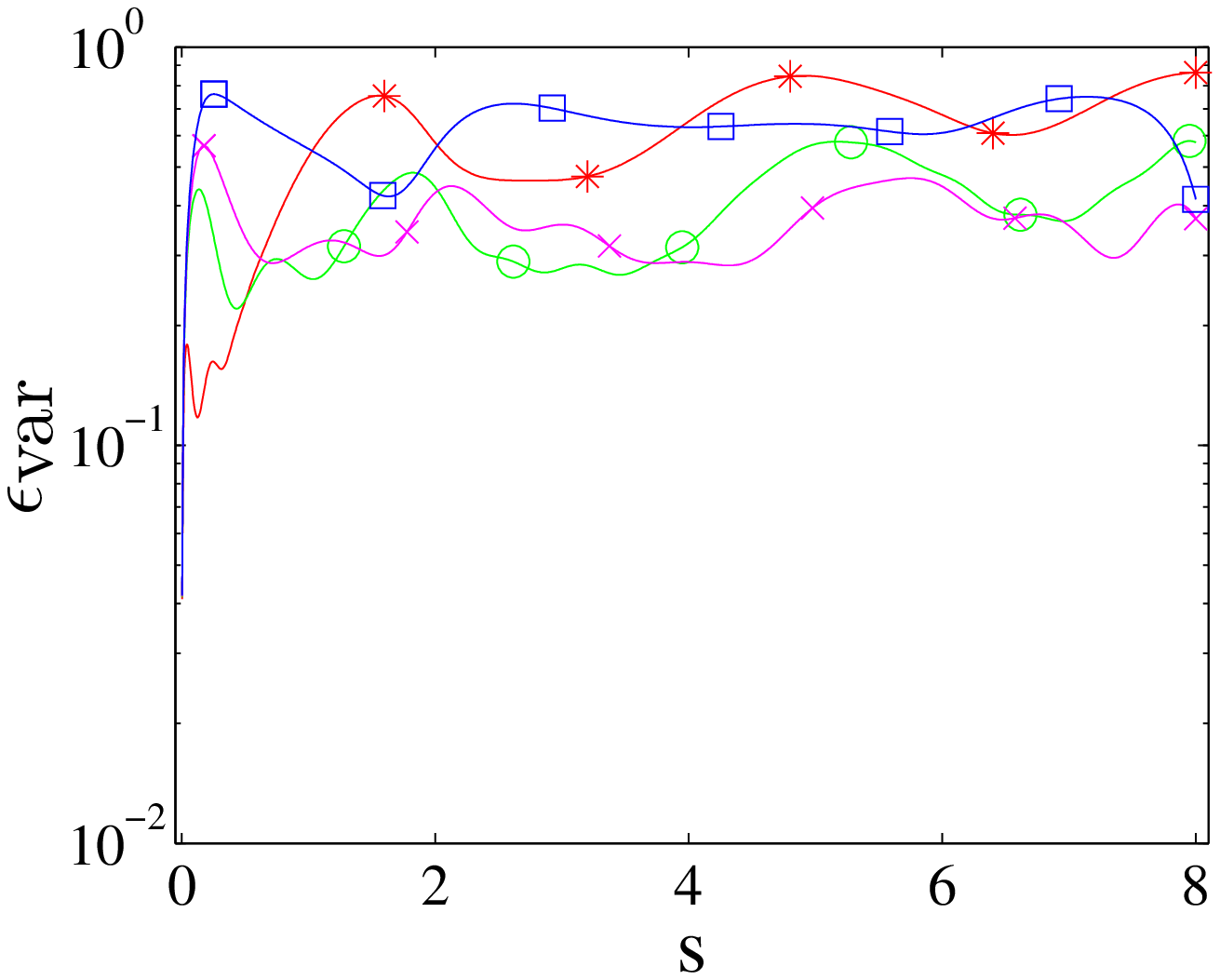}}  \\
\vspace{-0.2cm}
\subfigure[mean(u)]{
\includegraphics[scale=0.24]{./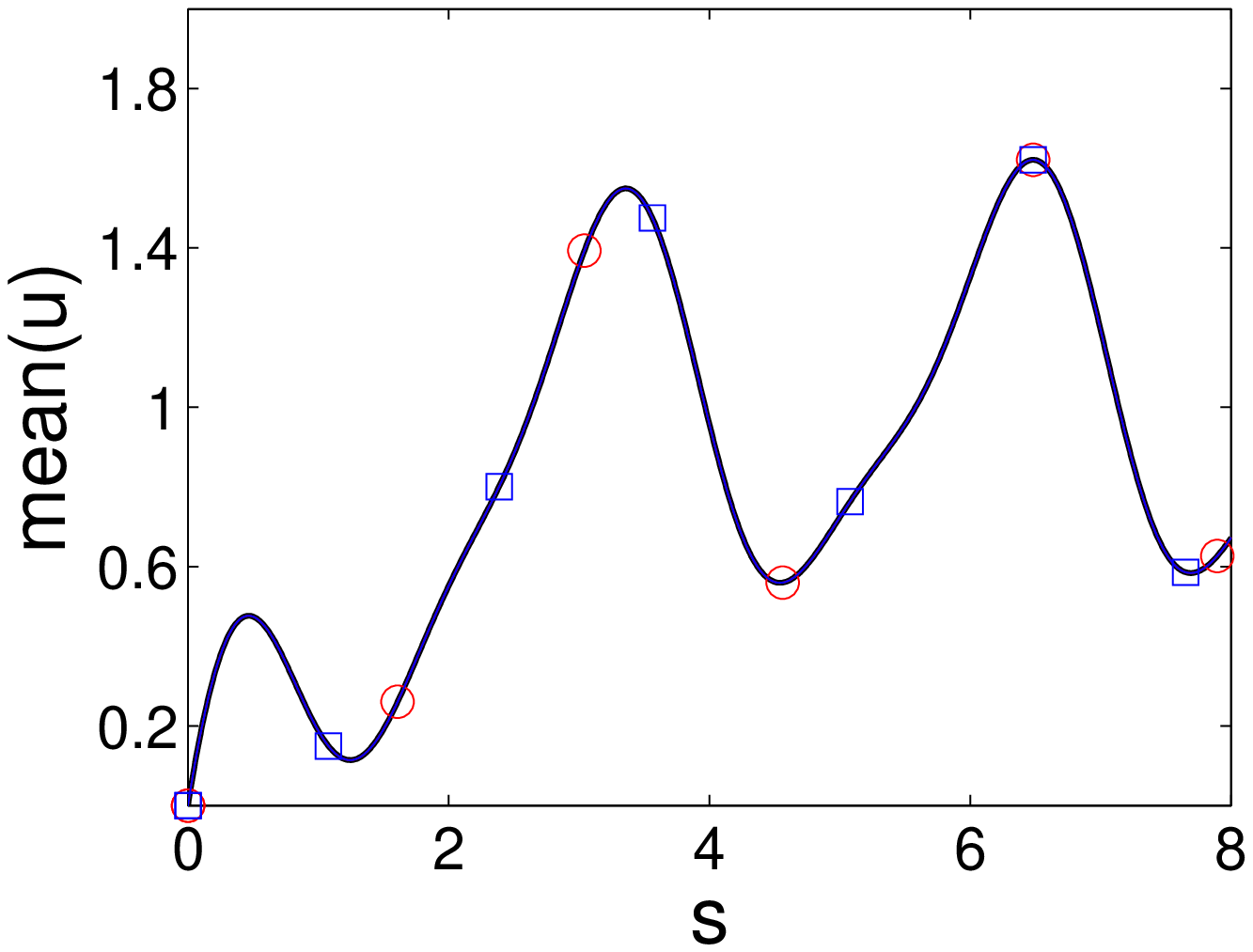}}
\subfigure[var(u)]{
\includegraphics[scale=0.24]{./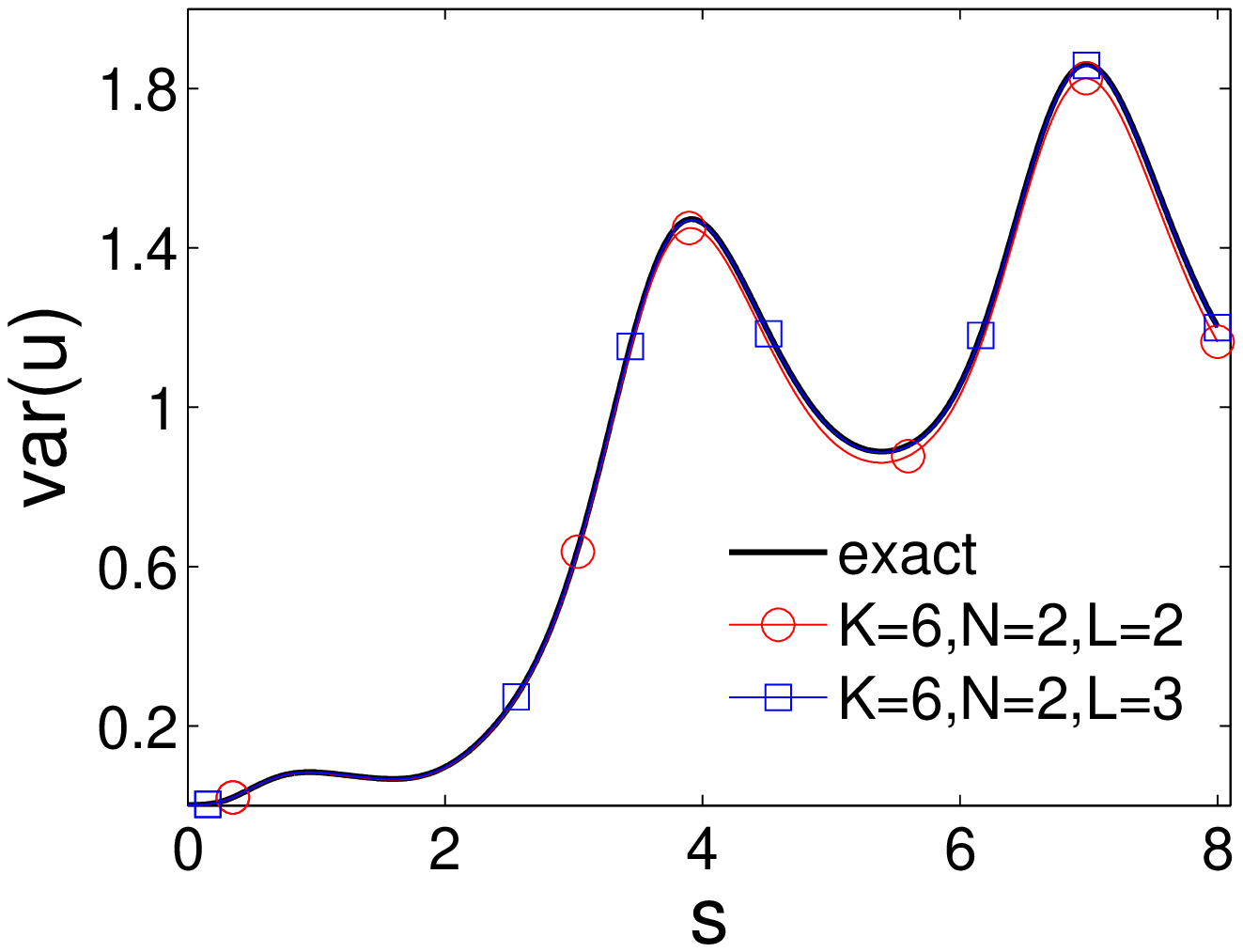}} 
\subfigure[$\epsilon_{\mbox{mean}}$]{
\includegraphics[scale=0.24]{./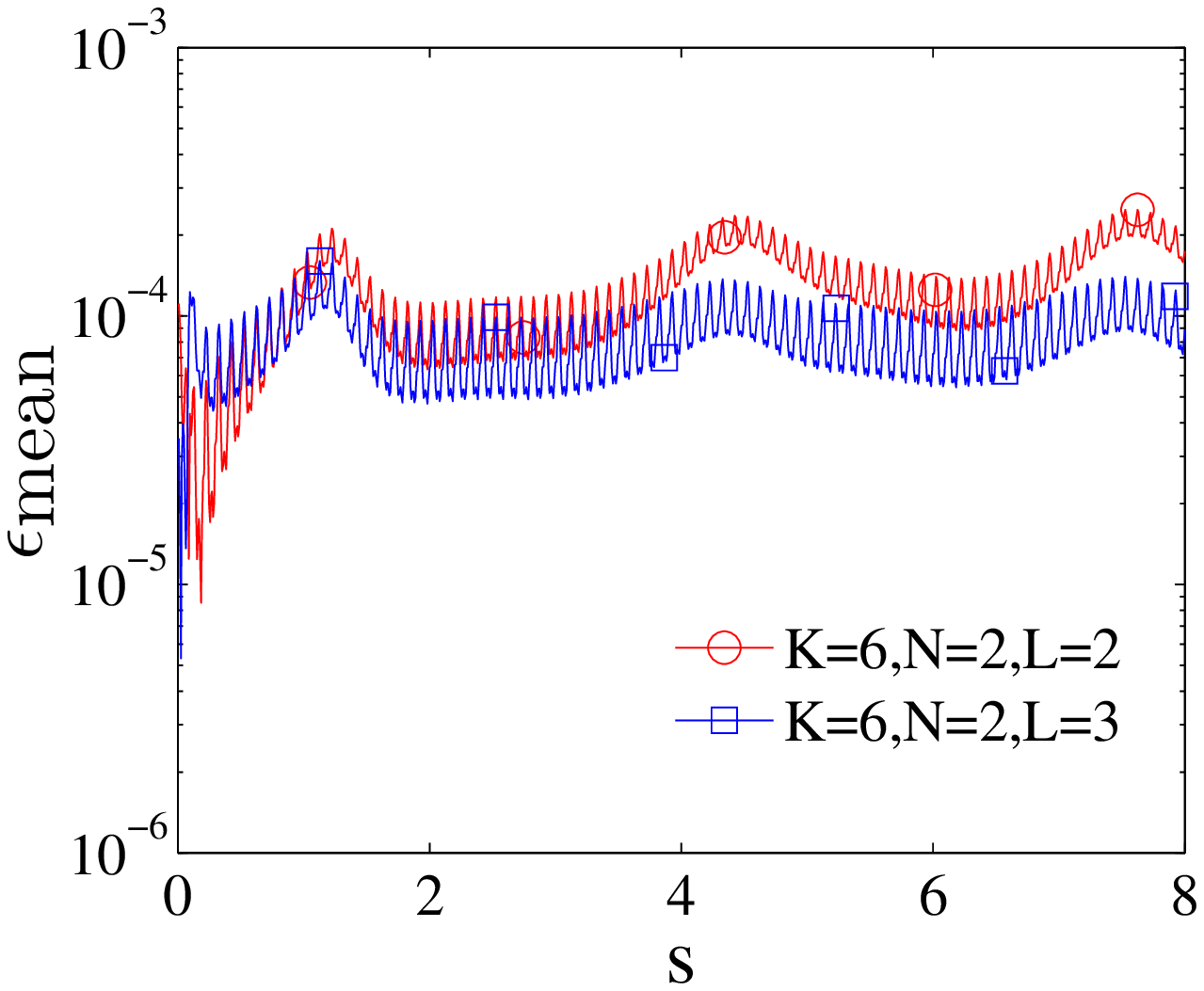}} 
\subfigure[$\epsilon_{\mbox{var}}$]{
\includegraphics[scale=0.24]{./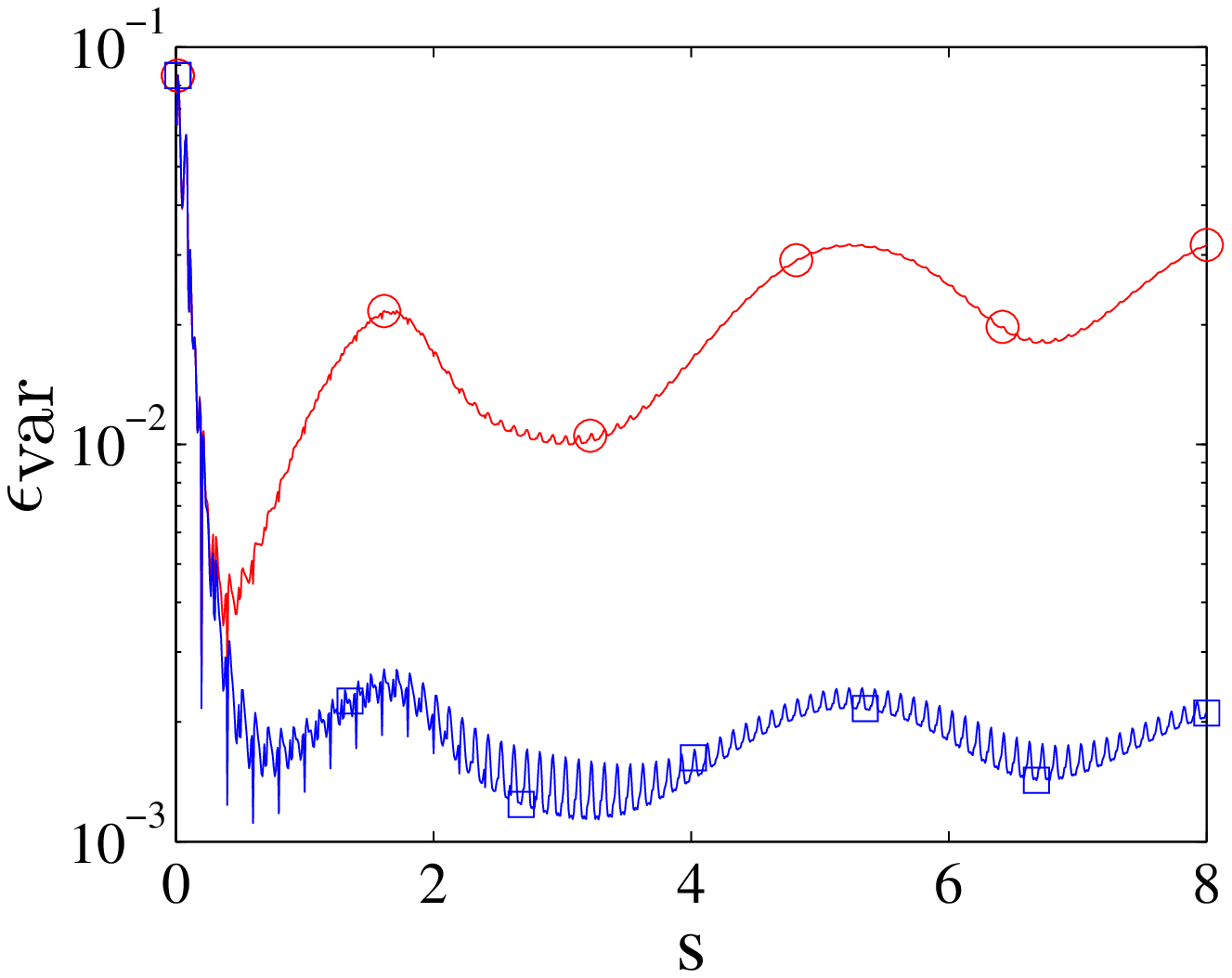}}
\vspace{-0.2cm}
\caption{Hermite PC \& DgPC $\Delta t = 0.1$.}
\label{fig:ex6_1}
\end{figure}

\end{exmp}

 \begin{exmp} \rm \,
 For this scenario, we consider the system parameters as $
a_u=1$ , $a_v=0$, $b_u =1.2$, $b_v=0.5$,$\sigma_u =0.5$,  $\sigma_v =0.5$ without the deterministic forcing $f=0$ and
with the initials $
u_0 = N(1,\sigma_u^2/8b_u) \, \indep \, v_0 = N(0,{\sigma_v^2/8b_v}).
$ In this regime, the dynamics of $u(s)$ are characterized by unstable bursts of large amplitude~\cite{BM13}.

Second order statistics obtained by Hermite PCE and our algorithm are presented in Figure \ref{fig:ex7_2}, where the first two parts (Figures \ref{fig:ex7_1a} and \ref{fig:ex7_1b}) correspond to Hermite PCE. We observe from Figure \ref{fig:ex7_1b} that the relative errors for standard Hermite expansions are unacceptably high. 
 On the other hand, as the degrees of freedom is increased in DgPC, there is a clear pattern of convergence of second order statistics; see Figures \ref{fig:ex7_2e} and \ref{fig:ex7_2f}. Note, however,  that short-time accuracy is better than accuracy for long times. This behavior may be explained by the onset of intermittency as nonlinear effects kick in after short times. Further, Figure \ref{fig:ex7_2g} shows the error behavior of variance at $t=10$ as $N$ and $L$ vary, and $K$ is fixed. The rate of convergence is consistent with exponential convergence in  $N$ and $L$ as the logarithmic plot is almost linear. For similar convergence behaviors, see \cite{Xiu_thesis,Luo,XK02}.     

\begin{figure}[!htb]
\centering
\subfigure[mean(u) (Hermite)]{ \label{fig:ex7_1a}
\includegraphics[scale=0.24]{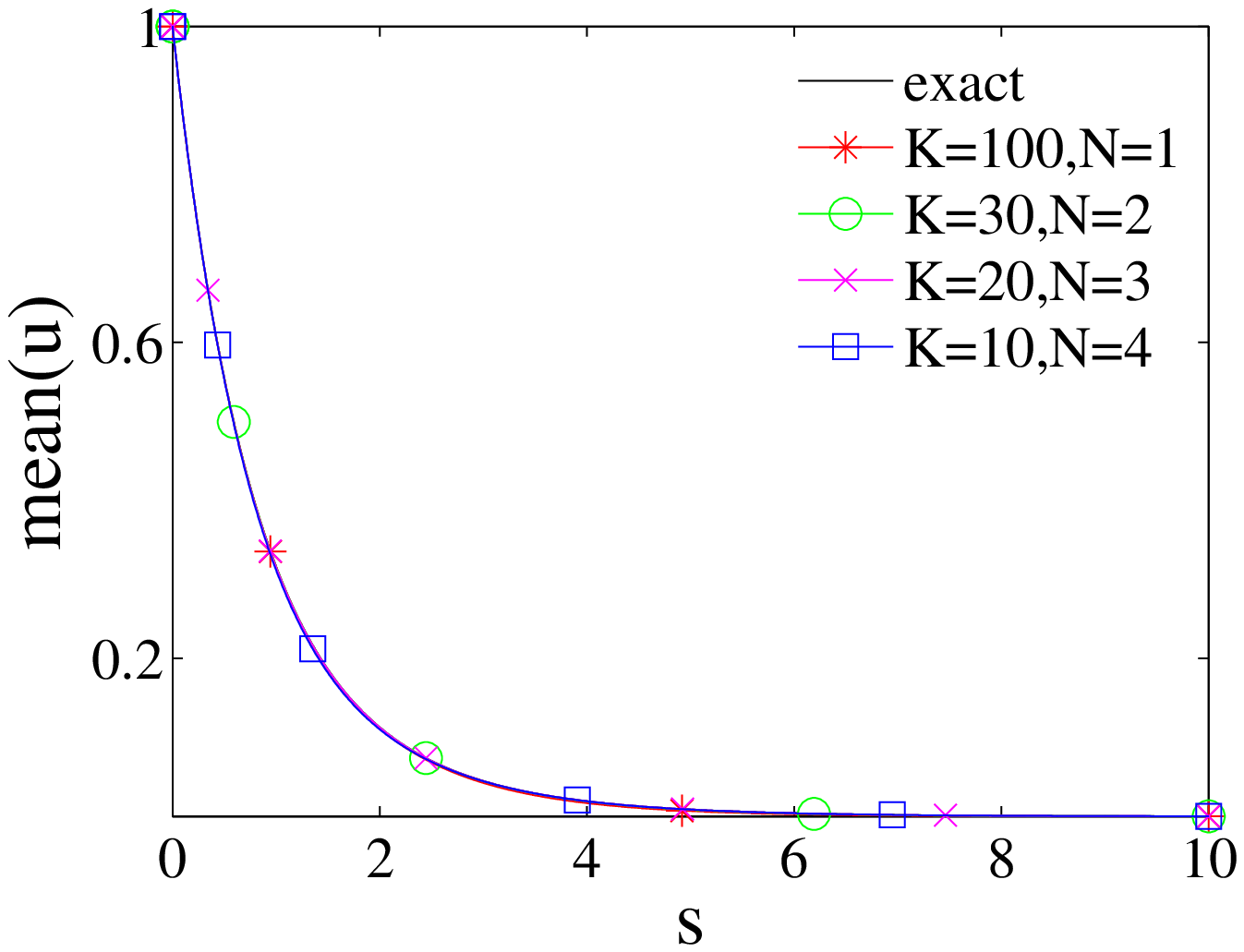}}
\subfigure[var(u) (Hermite)]{\label{fig:ex7_1b}
\includegraphics[scale=0.24]{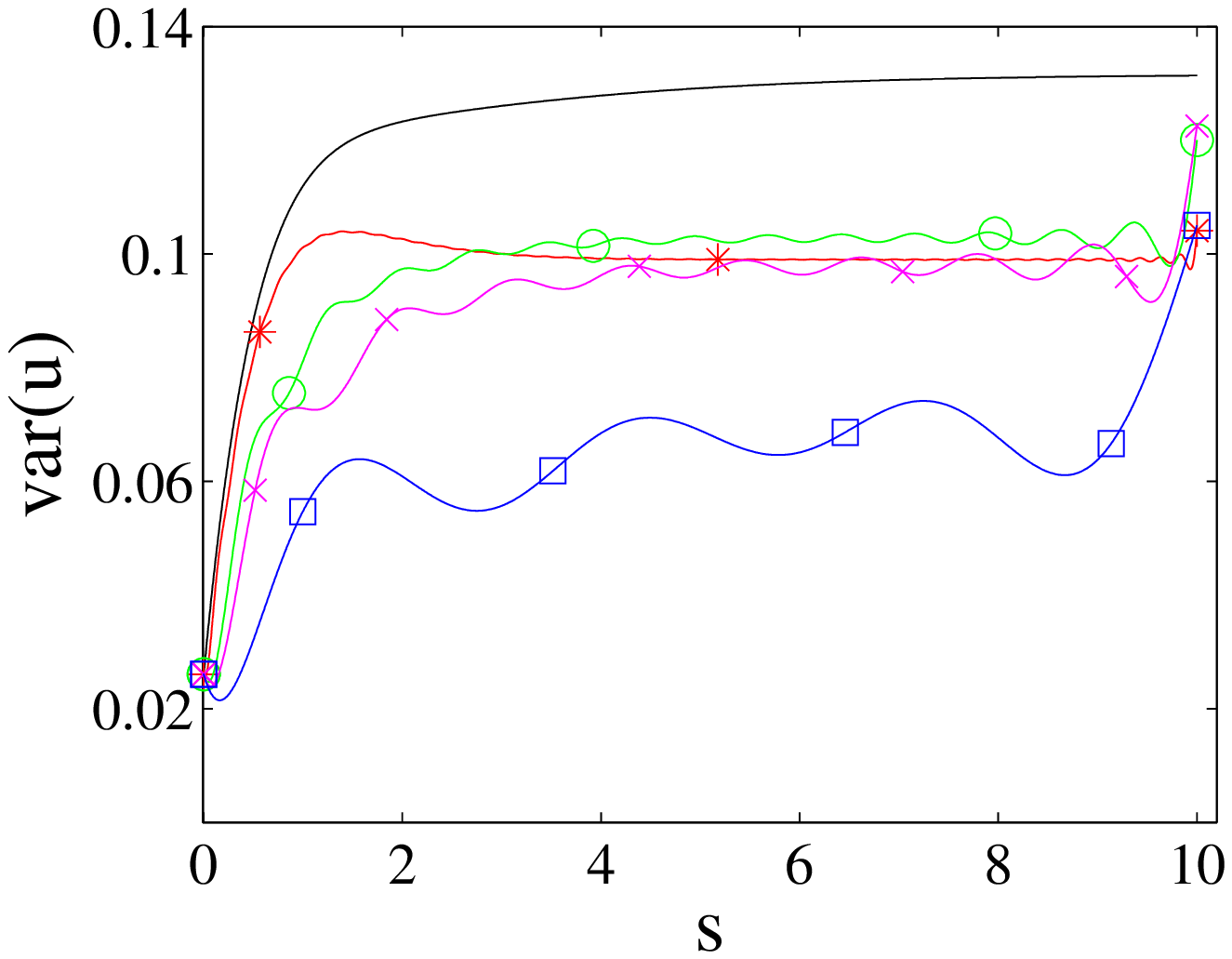} } 
\subfigure[mean(u)]{\label{fig:ex7_2c}
\includegraphics[scale=0.24]{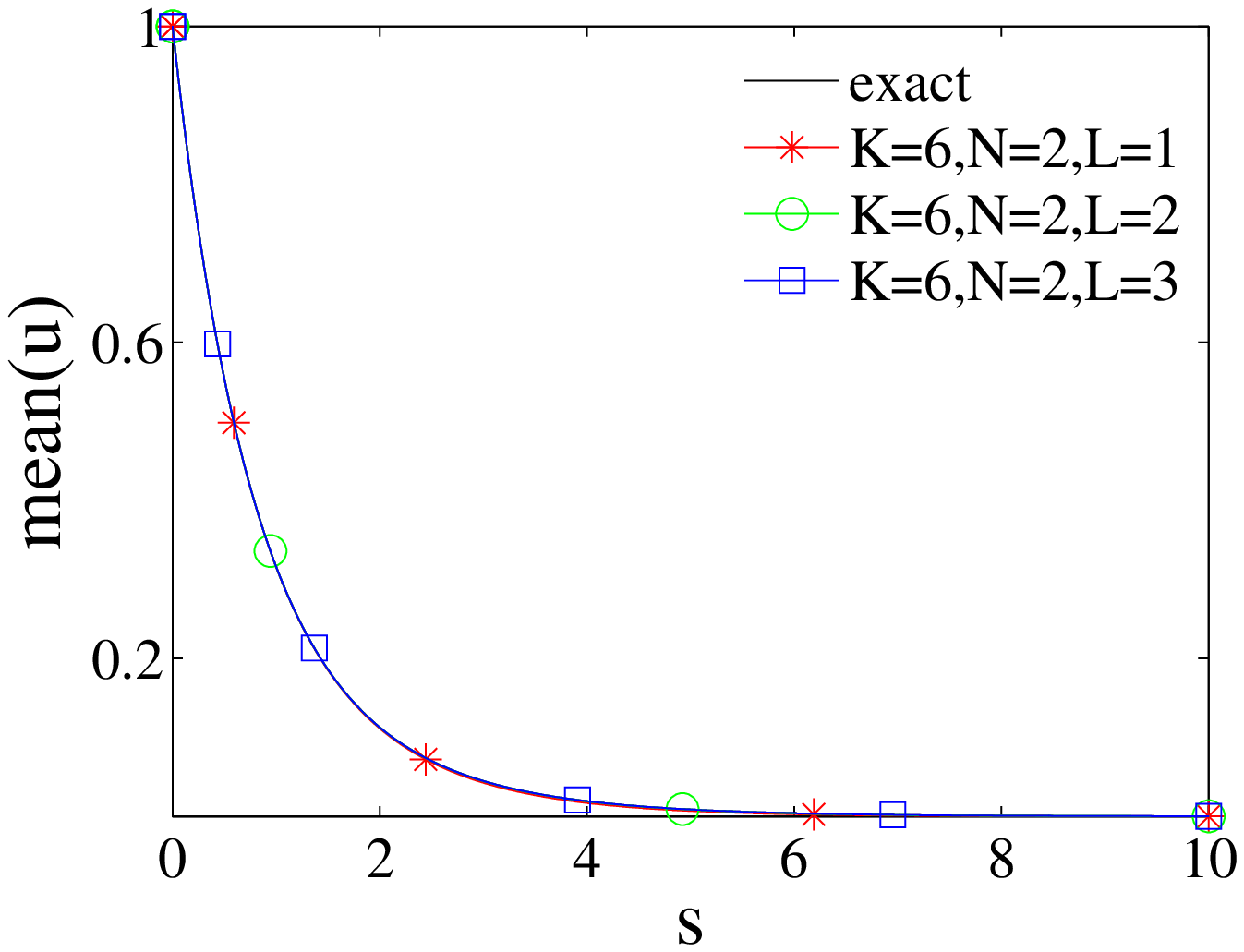}}
\subfigure[var(u)]{\label{fig:ex7_2d}
\includegraphics[scale=0.236]{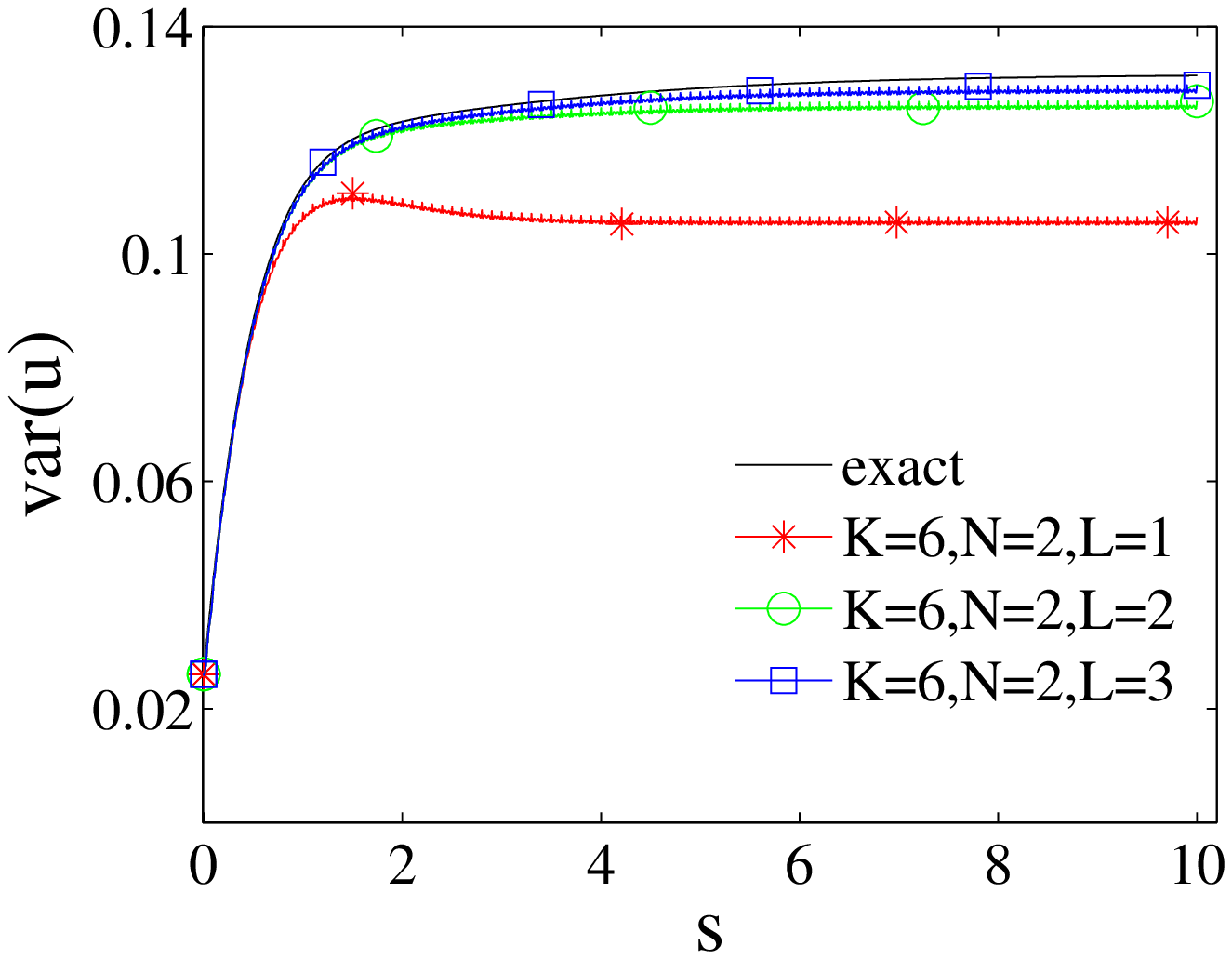}} \\
\vspace{-0.2cm}
\subfigure[$\epsilon_{\mbox{mean}}$]{\label{fig:ex7_2e}
\includegraphics[scale=0.24]{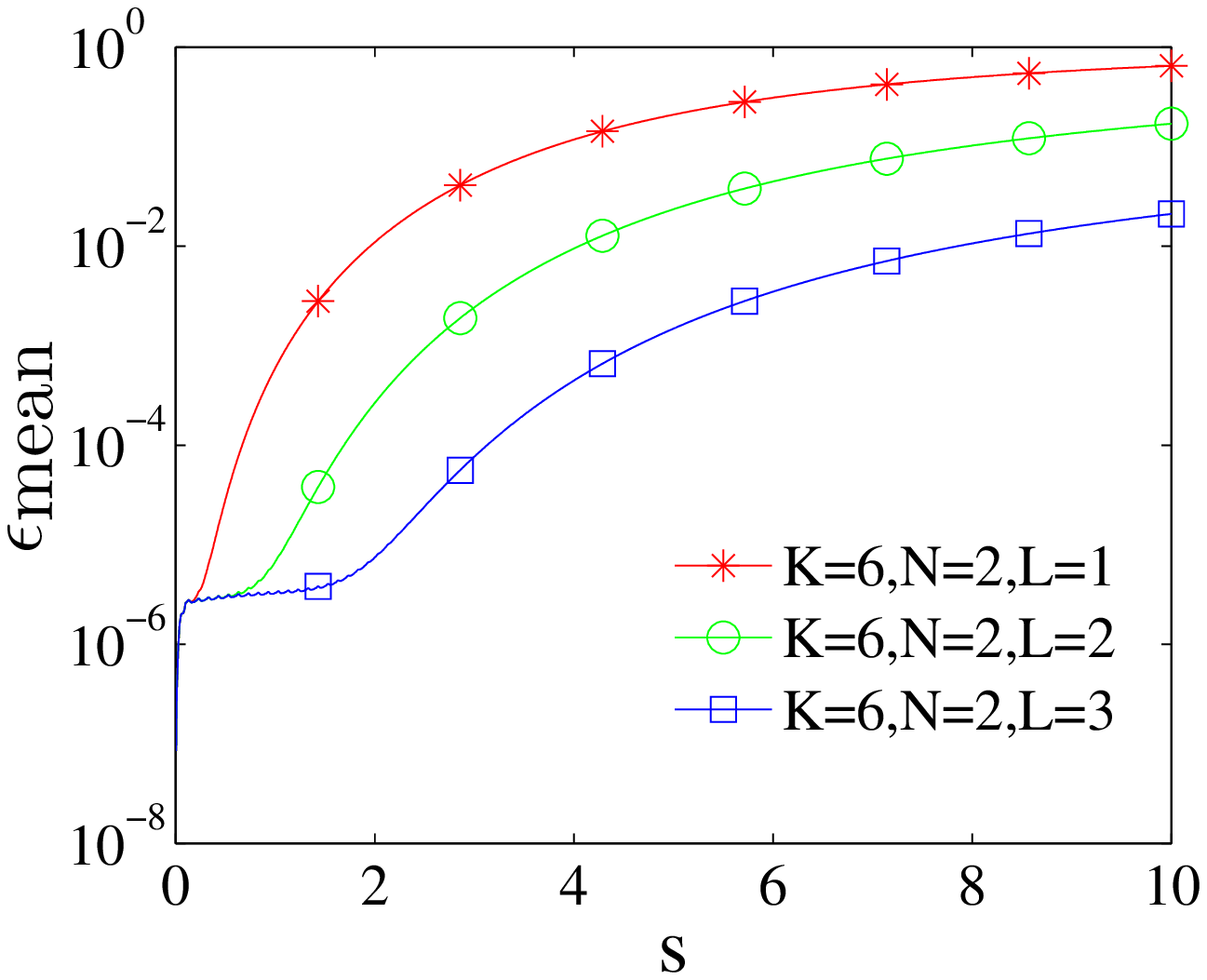}}
\subfigure[$\epsilon_{\mbox{var}}$]{\label{fig:ex7_2f}
\includegraphics[scale=0.24]{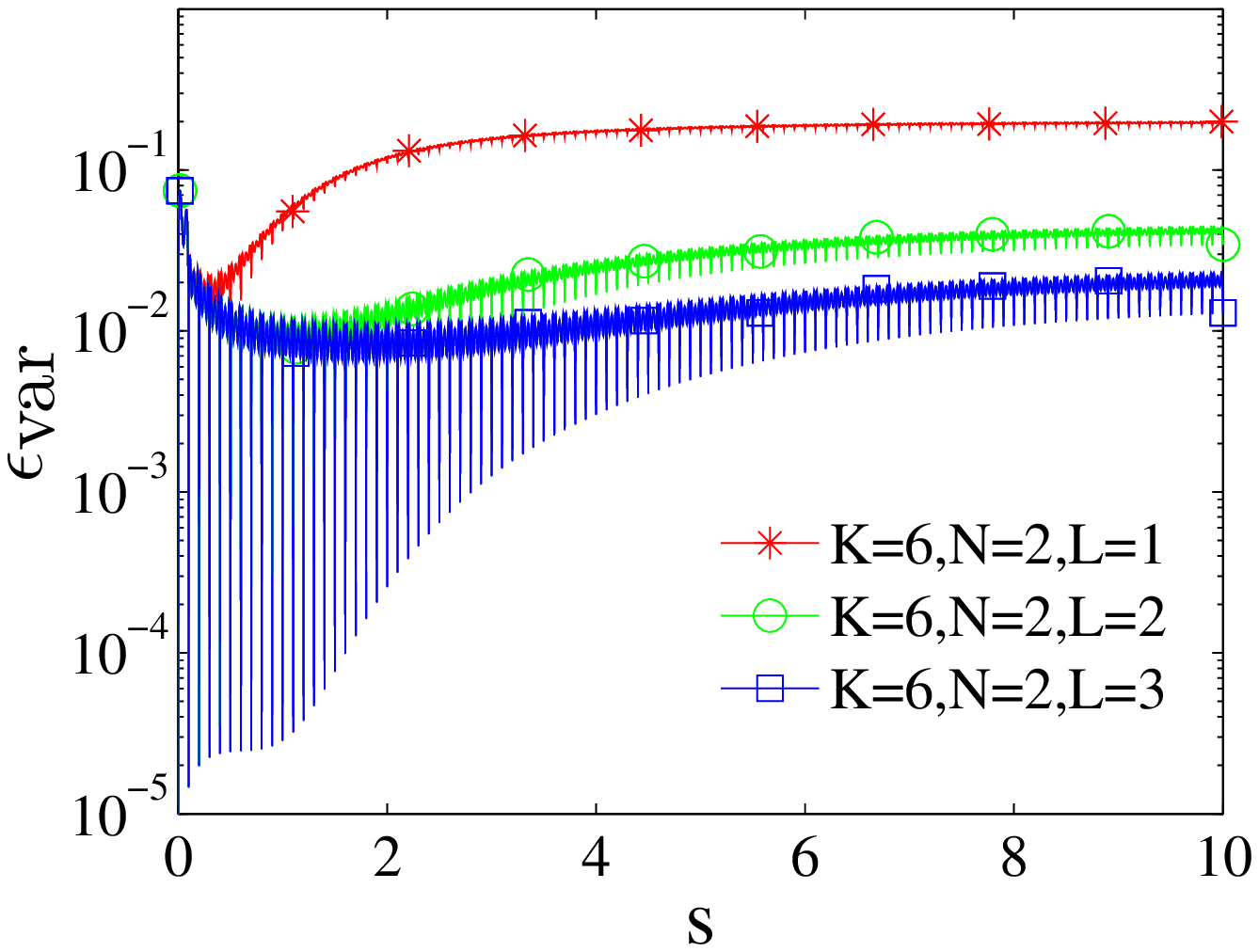}}
\subfigure[ $N,L$ conv. $t=10$]{ \label{fig:ex7_2g}
\includegraphics[scale=0.24]{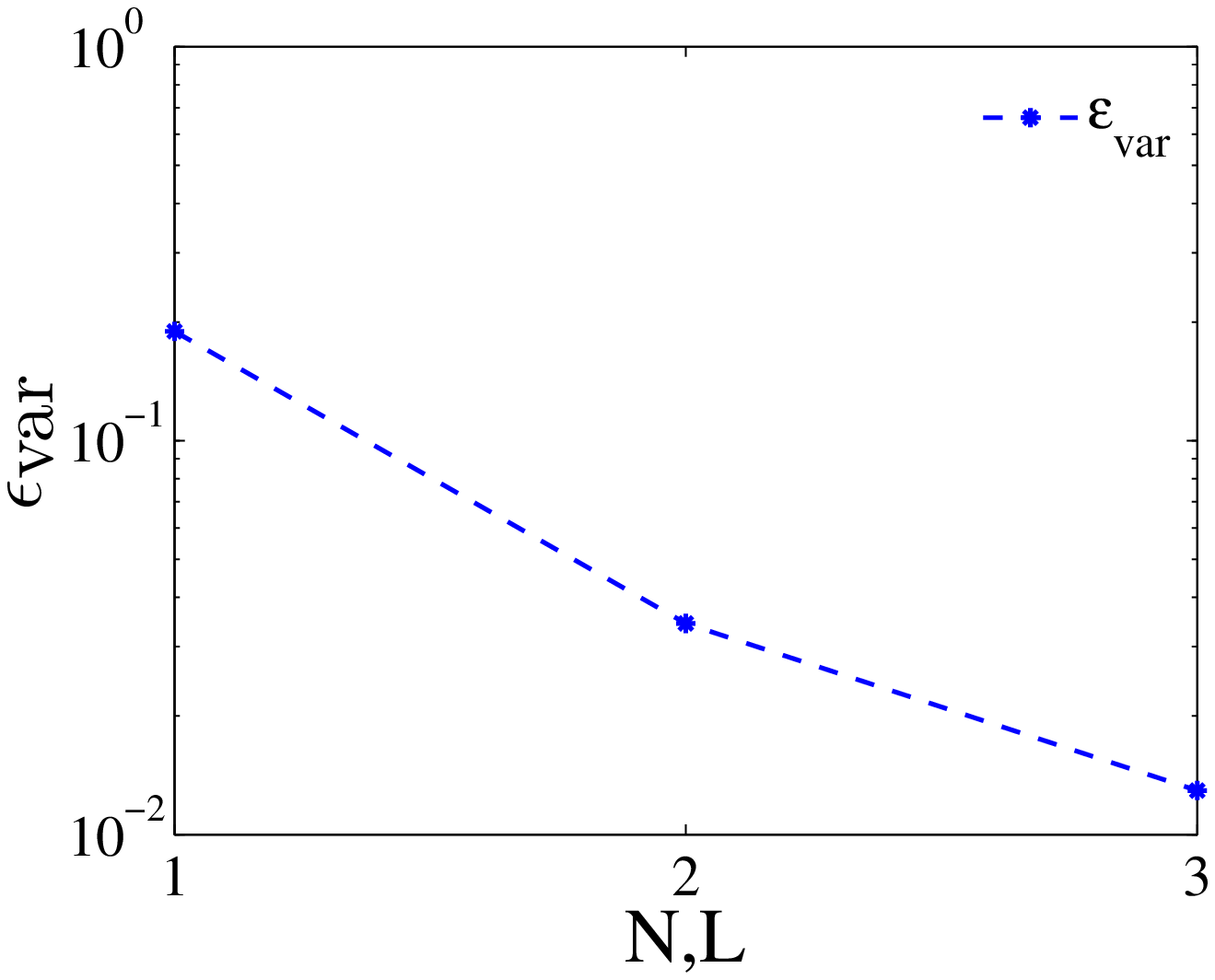}}
\vspace{-0.2cm}
\caption{Hermite PC \& DgPC $\Delta t = 0.1$.}
\label{fig:ex7_2}
\end{figure}

\begin{figure}[!htb]
\vspace{-0.2cm}
\centering
\subfigure[ $1^{st}\& 2^{nd}$ order]{
\includegraphics[scale=0.24]{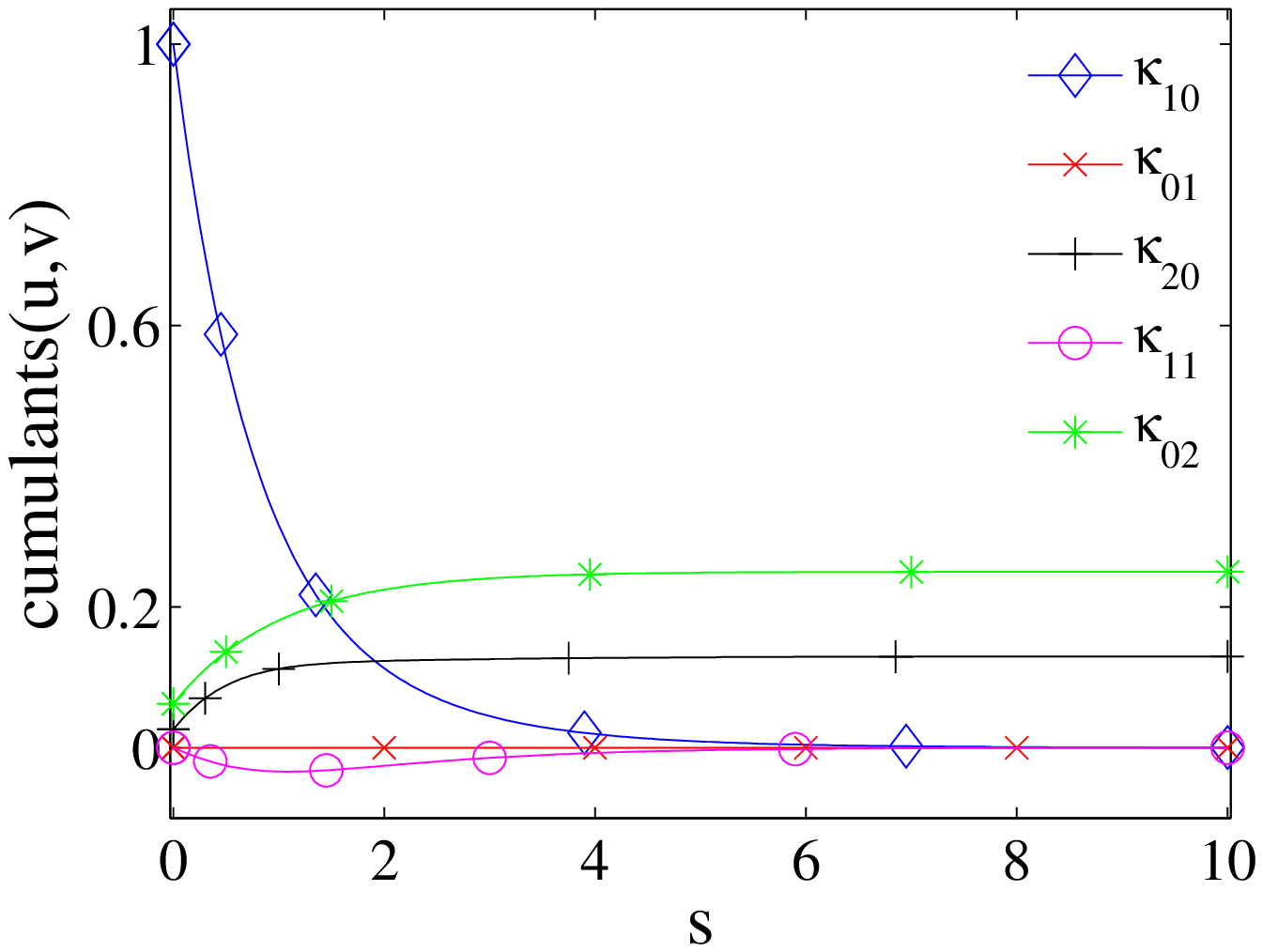}}
\subfigure[ $3^{rd}$ order ]{
\includegraphics[scale=0.24]{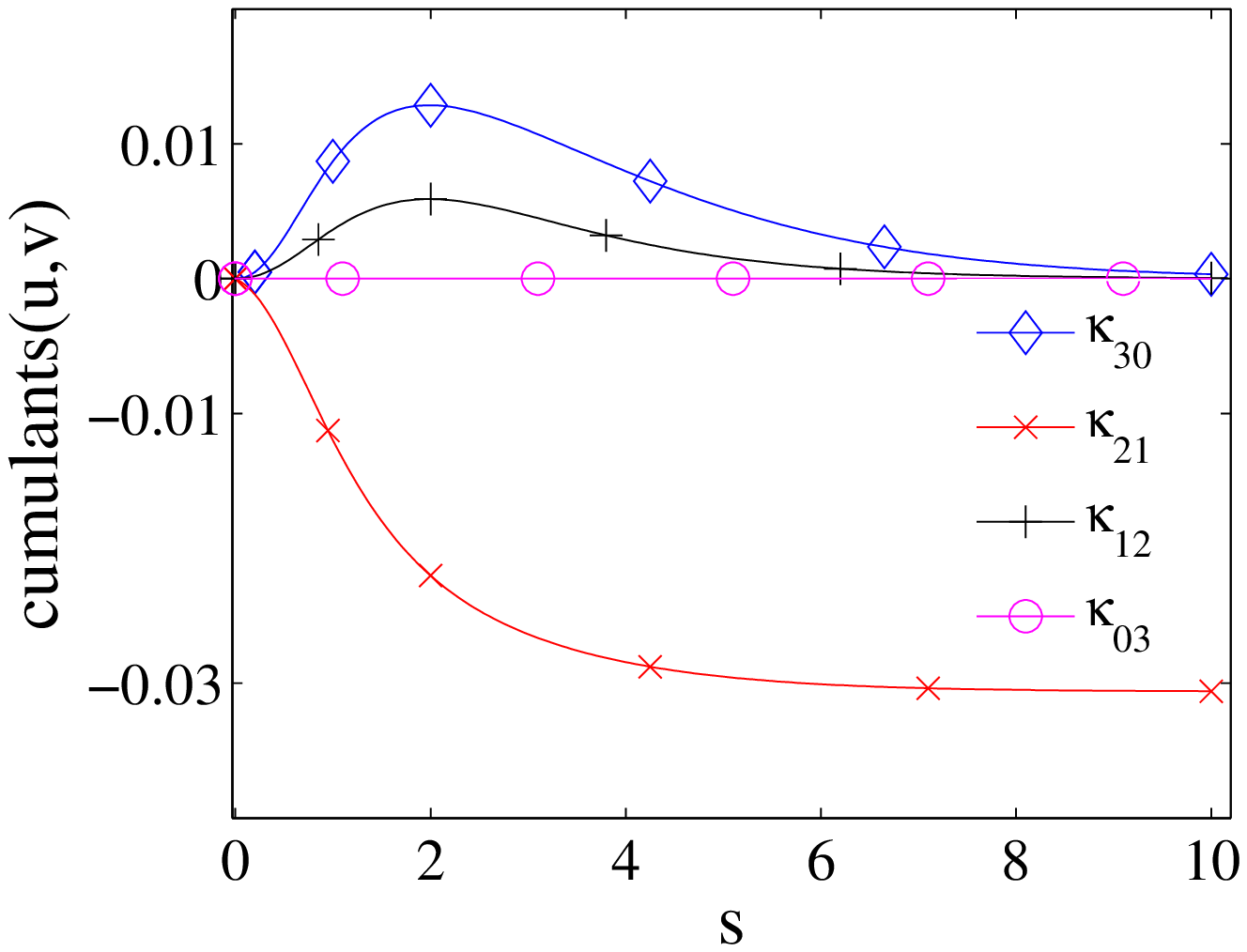}} 
\subfigure[ $4^{th}$ order ]{
\includegraphics[scale=0.24]{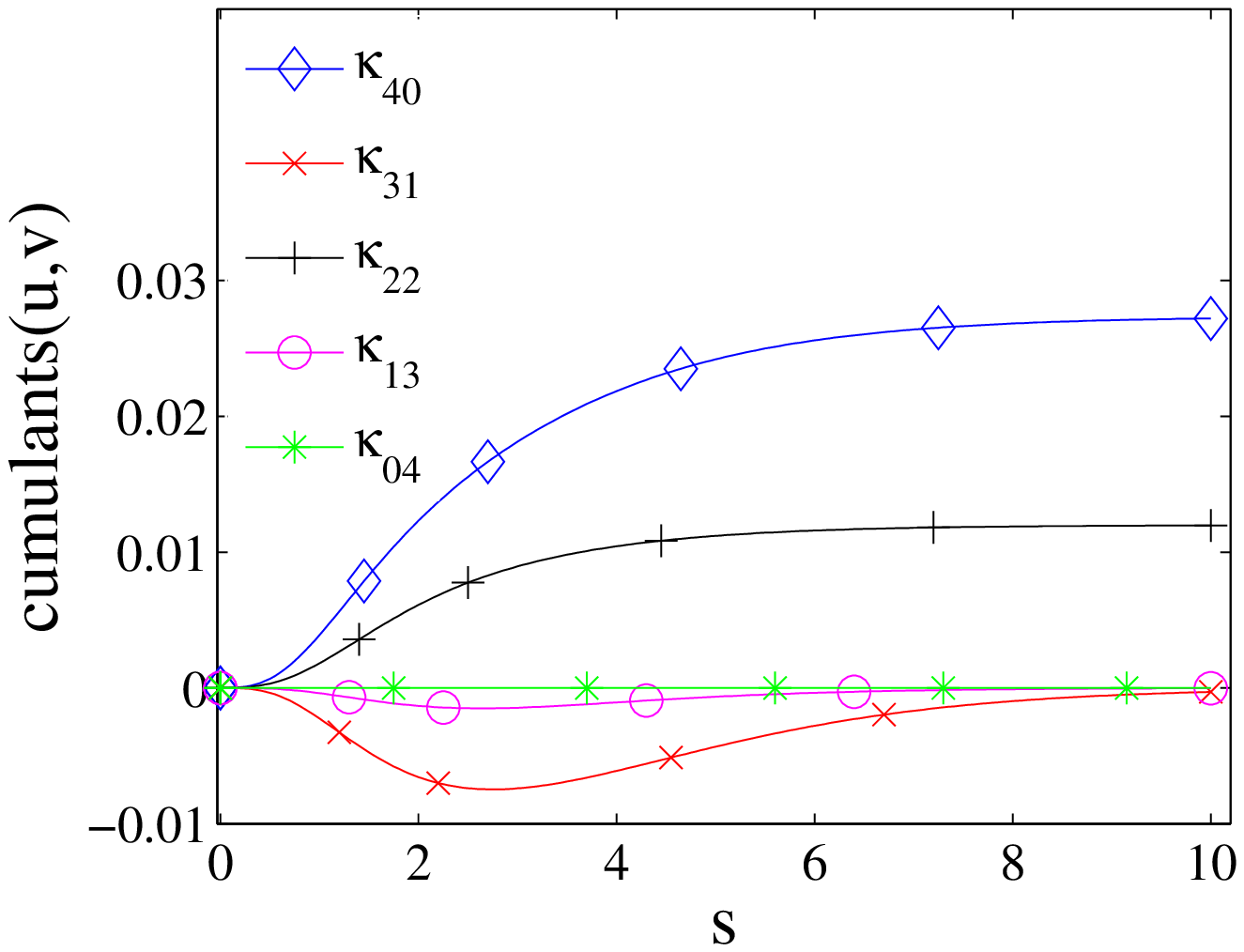}} \\
\vspace{-0.3cm}
\subfigure[ $5^{th}$ order]{
\includegraphics[scale=0.24]{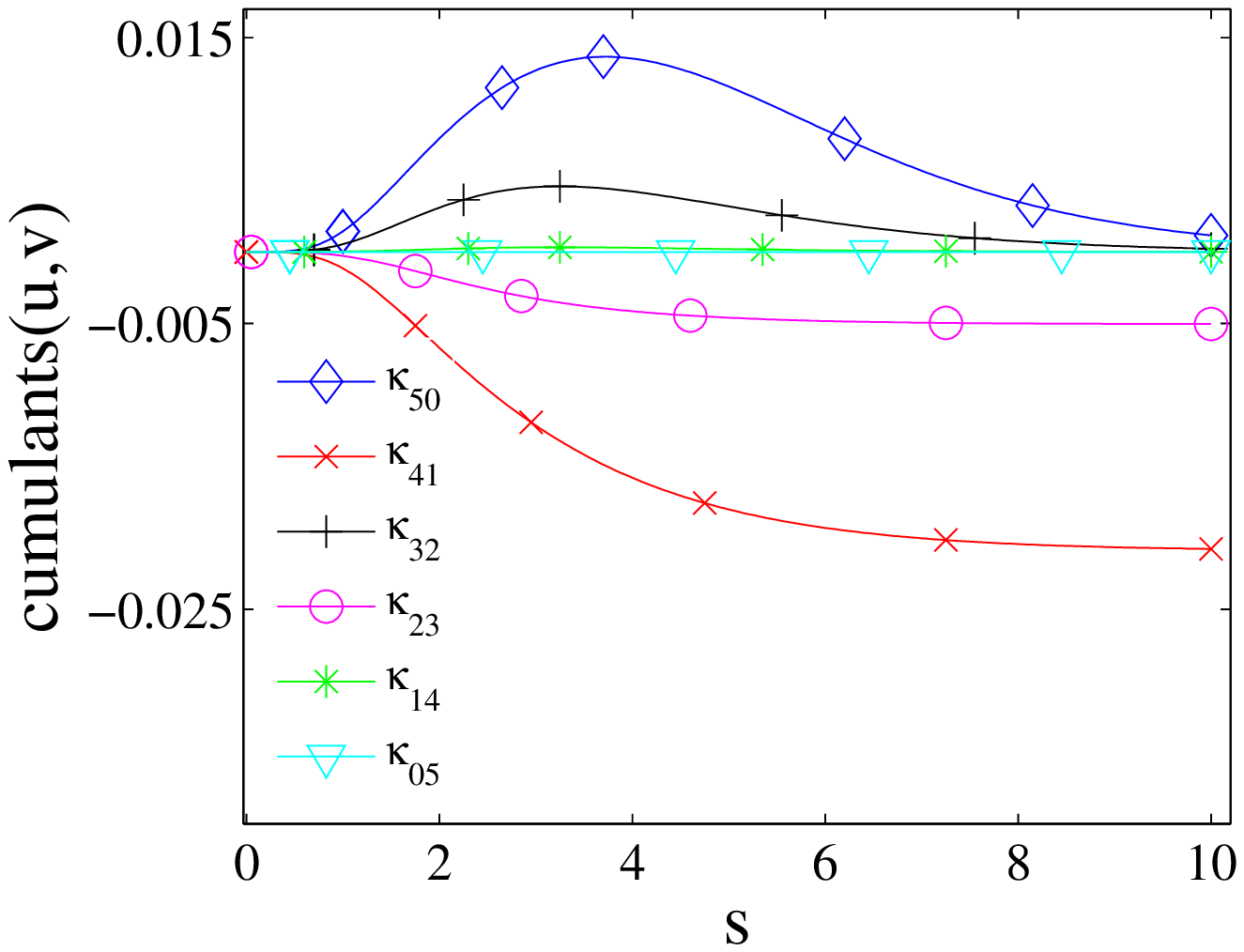}}
\subfigure[ $6^{th}$ order ]{
\includegraphics[scale=0.24]{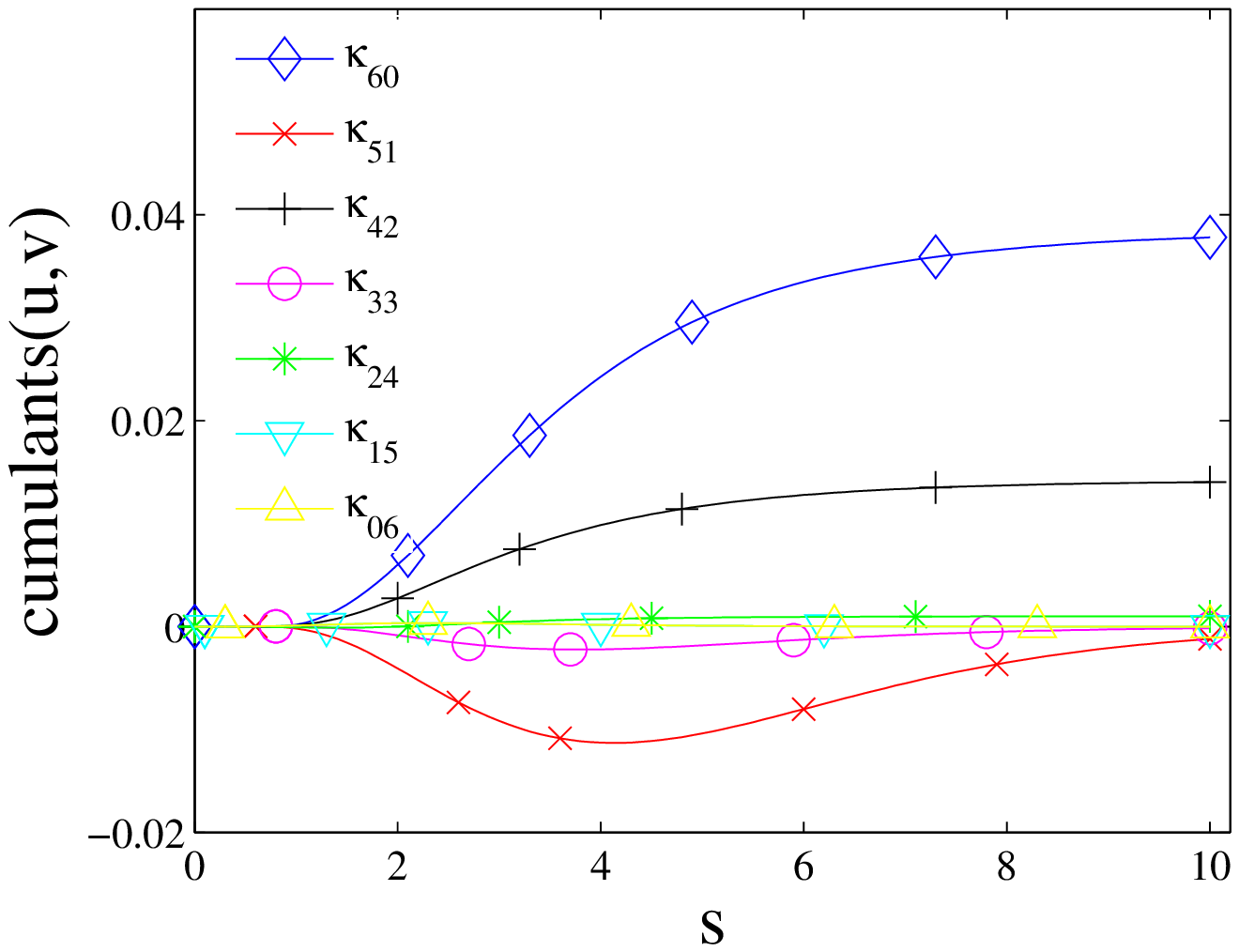}}
\subfigure[ kurtosis excess ]{ \label{fig:ex7_3f}
\includegraphics[scale=0.24]{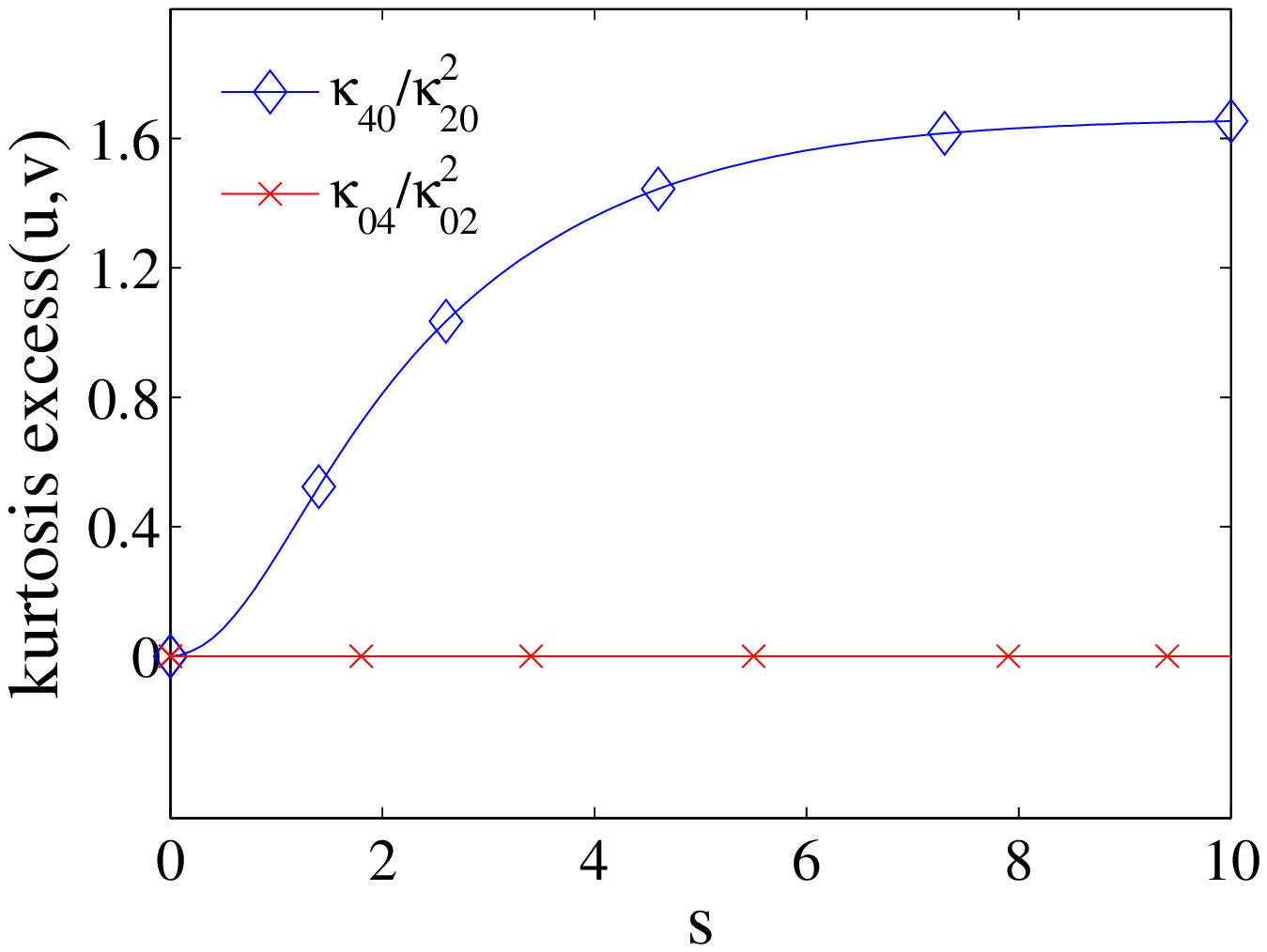}}
\vspace{-0.2cm}
\caption{Bivariate cumulants \rm ($a_v=0$).}
\label{fig:ex7_3}
\end{figure} 
Finally, in Figure \ref{fig:ex7_3}, we present the evolution of the first few cumulants obtained by DgPC, which shows that the system converges to a steady state distribution as time increases. Bivariate cumulants are ordered so that first and second subscripts correspond to $u$ and $v$, respectively. Figure \ref{fig:ex7_3f} shows  kurtosis excess for $u,v$ and clearly indicates that the dynamics of $v$ stay Gaussian, whereas those of $u$  converge to a non-Gaussian state.   
\end{exmp}

 \begin{exmp} \rm \,
  As a final example, using the same set of parameters of the previous example, we introduce a small perturbation to the second equation and set $a_v=0.03$ in \eqref{eq:system}. Another choice of perturbation as $a_v=0.01$ was considered before in~\cite{BM13}. Comparing Figurs \ref{fig:ex7_3} and \ref{fig:ex8}, we see that evolutions of higher order cumulants are perturbed in most cases as expected. In this case, it took a longer transient time to converge to an invariant measure as additional nonlinearity was introduced into the system.

\begin{figure}[!htb]
\centering
\subfigure[ $1^{st}\& 2^{nd}$ order]{
\includegraphics[scale=0.25]{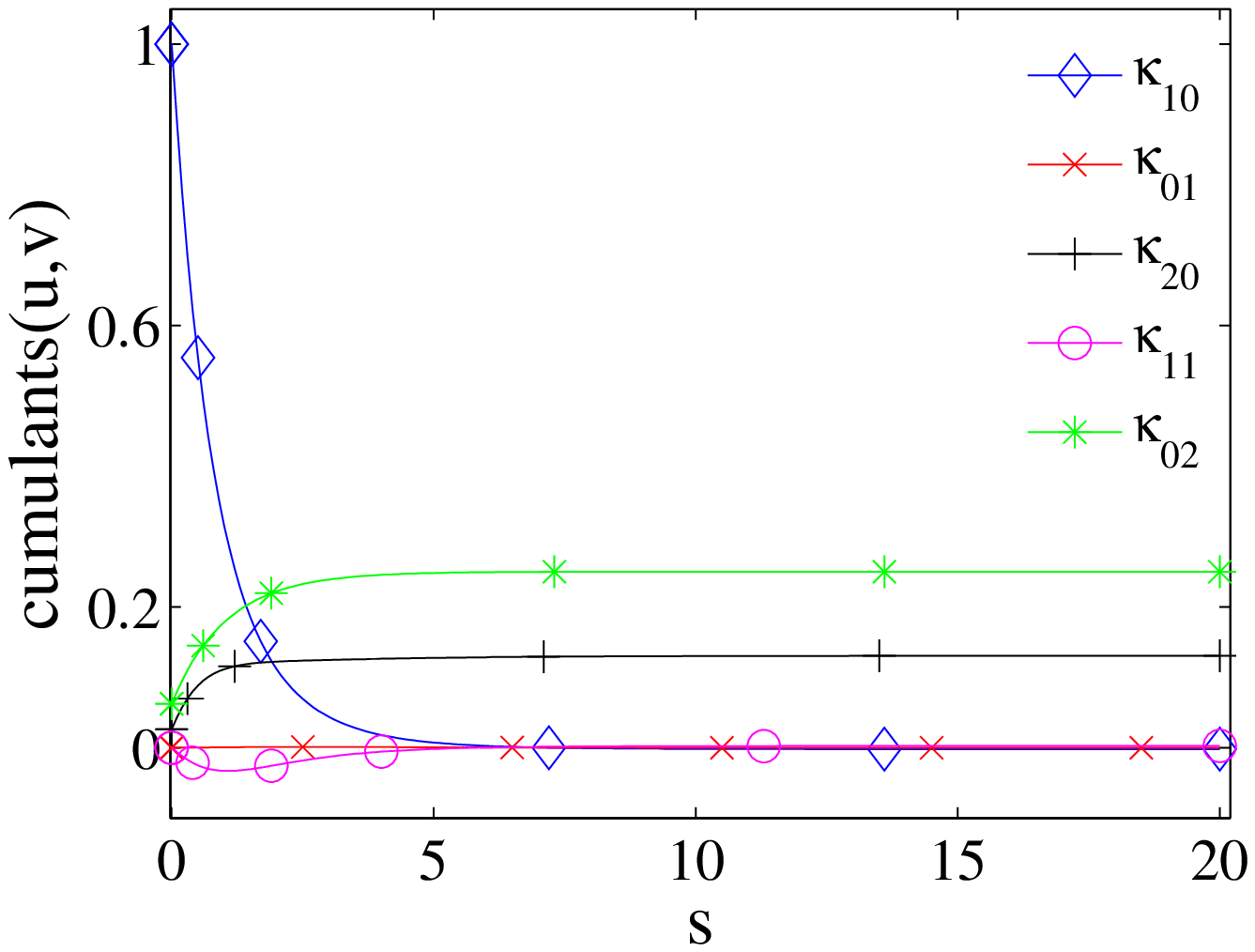}}
\subfigure[ $3^{rd}$ order ]{
\includegraphics[scale=0.25]{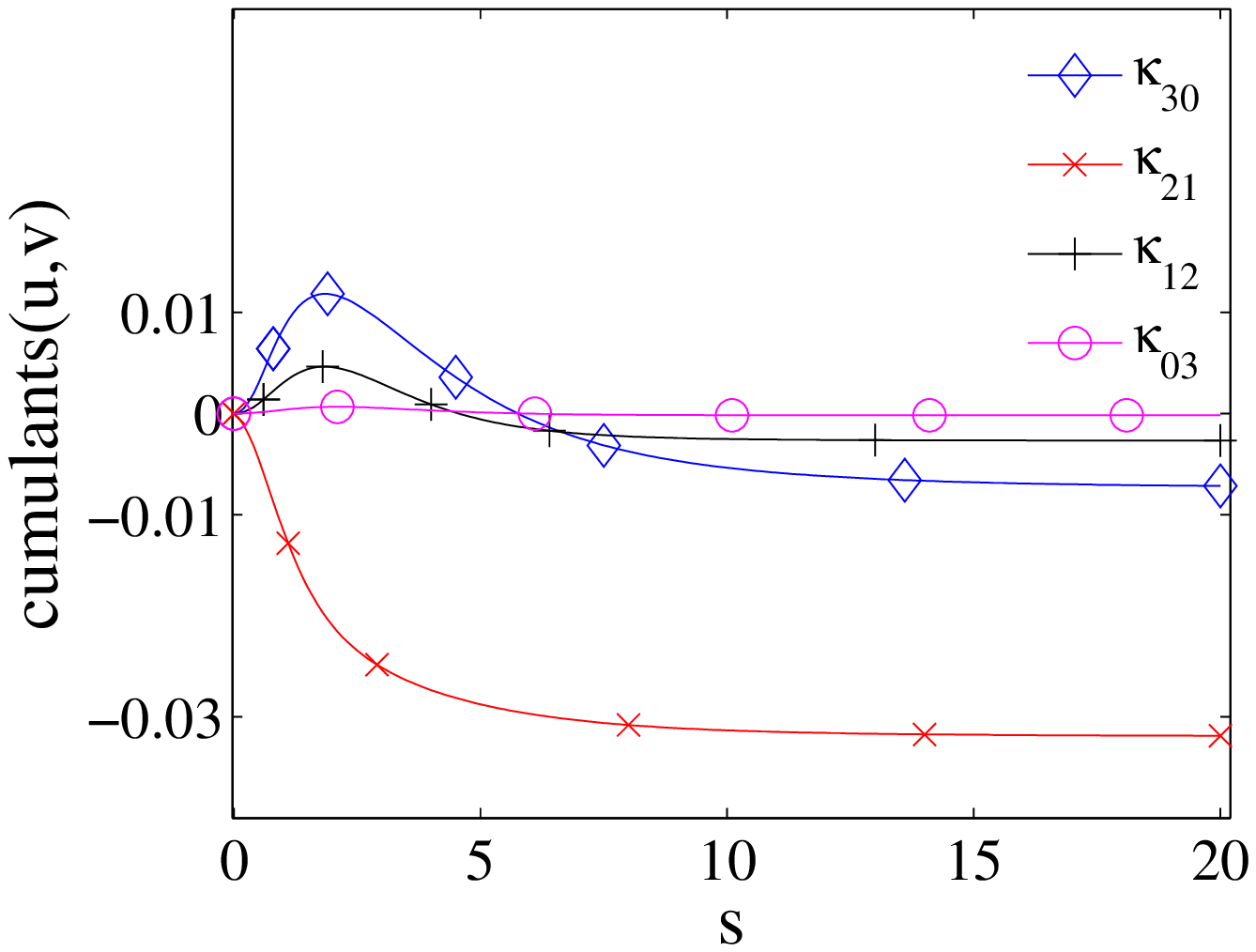}} 
\subfigure[ $4^{th}$ order]{
\includegraphics[scale=0.25]{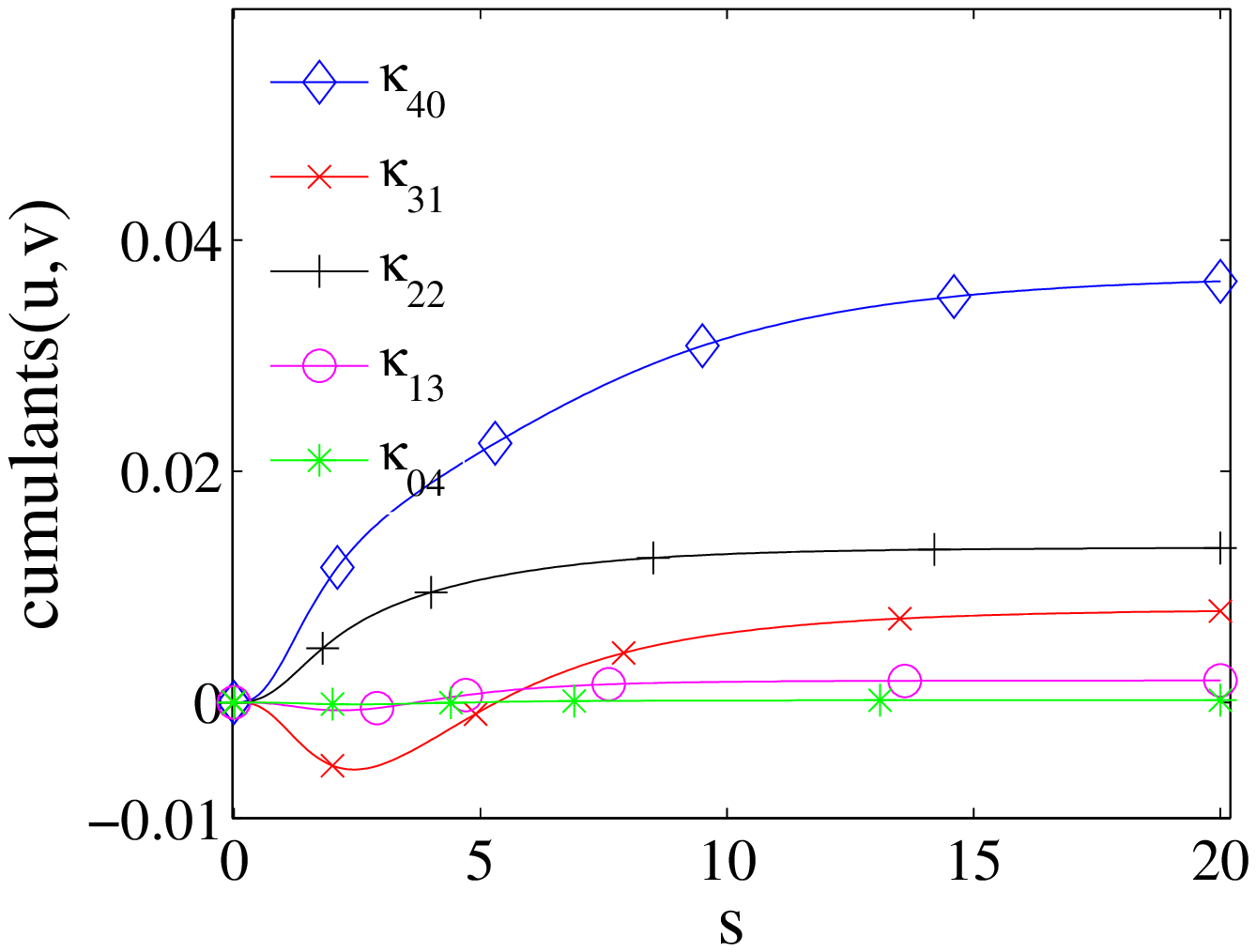}}\\
\vspace{-0.3cm}
\subfigure[ $5^{th}$ order]{
\includegraphics[scale=0.25]{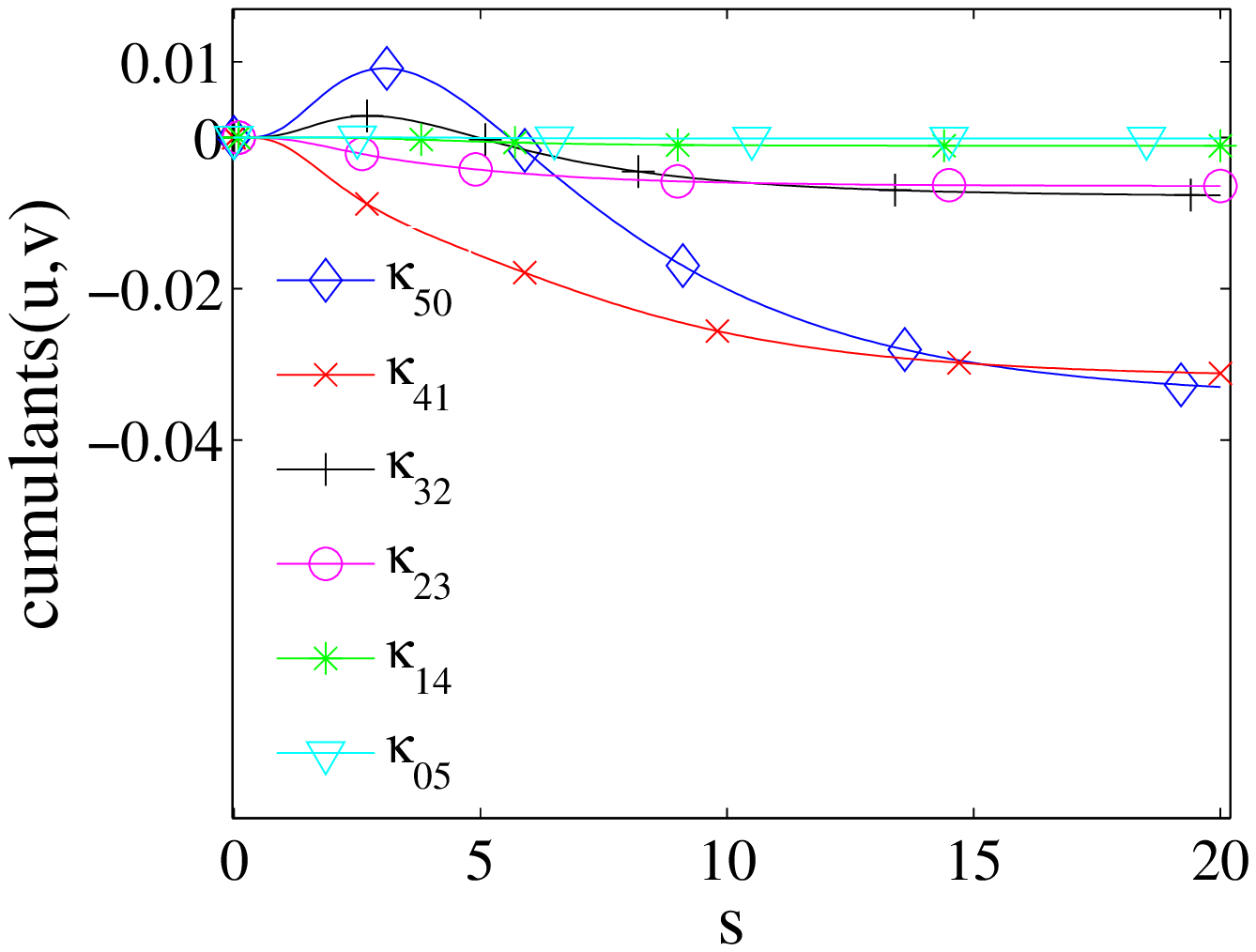}}
\subfigure[cumulants of $v$]{
\includegraphics[scale=0.25]{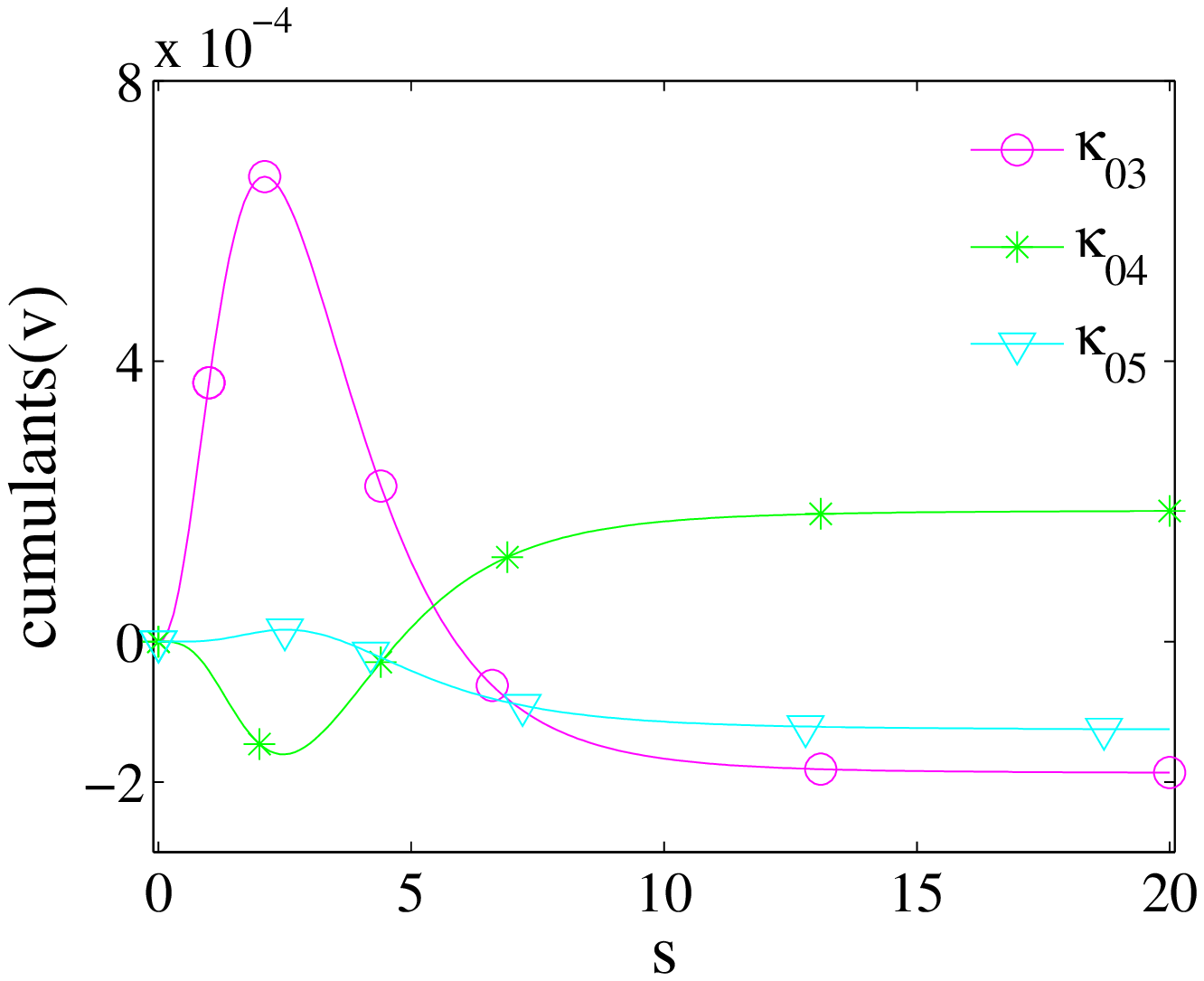}}
\subfigure[kurtosis excess ]{
\includegraphics[scale=0.25]{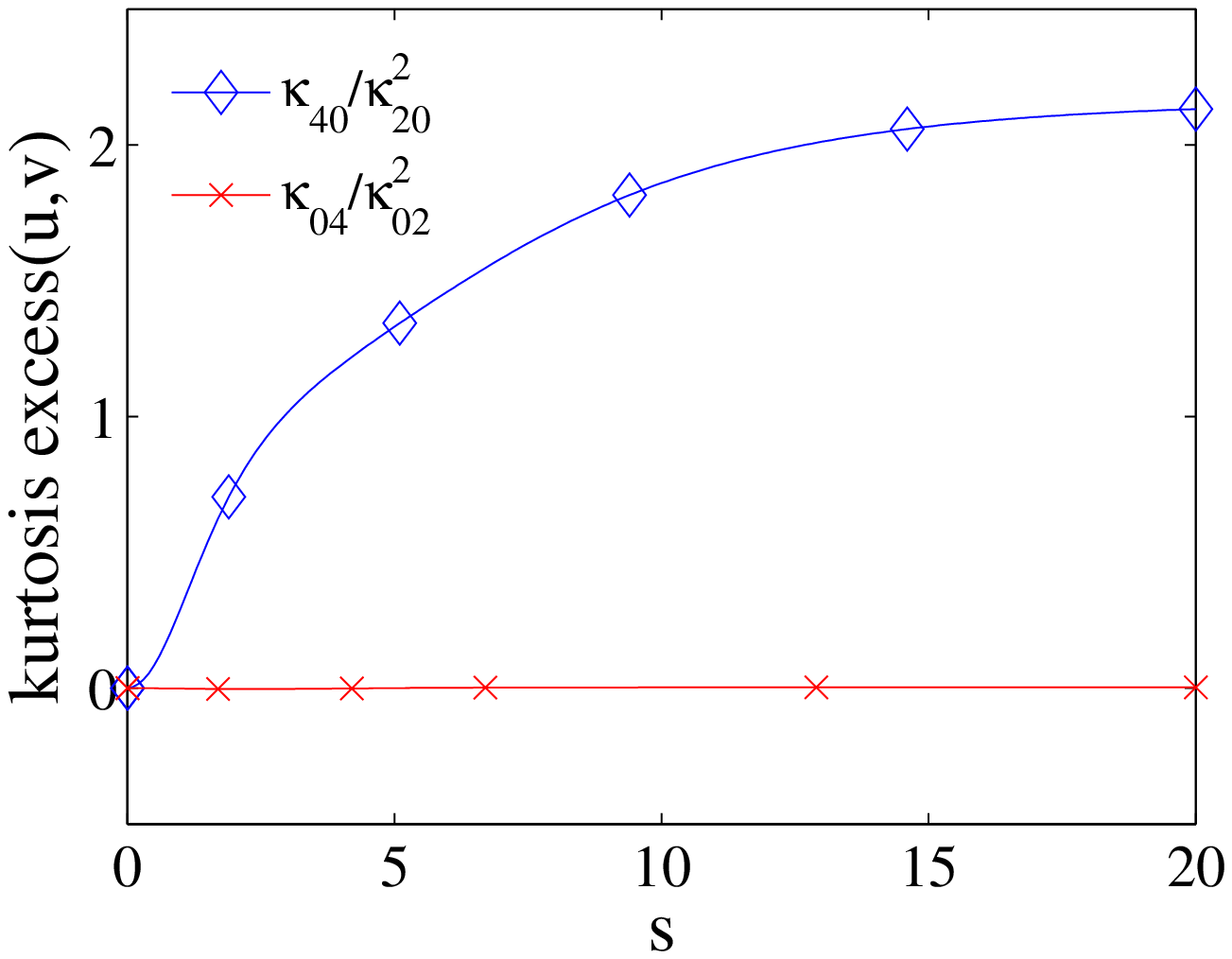}}
\vspace{-0.2cm}
\caption{Bivariate cumulants \rm ($a_v=0.03$).}
\label{fig:ex8}
\end{figure} 

\end{exmp}

\section{Conclusions}
\label{sec:conclu}

Polynomial chaos expansions (PCE) applied to the simulation of evolution equations suffer from two drawbacks: in the presence of stochastic forcing, the dimension of randomness is too large, and the long-time solution may not be sparsely represented in a fixed PCE basis \cite{BM13,GSVK10}. In the setting of Markovian random forcing, we have proposed a restart method that addresses the two aforementioned drawbacks. Such restarts of the PCE, which we called here Dynamical generalized Polynomial Chaos (DgPC), allow us to both to keep the number of random variables small and to obtain a solution that remains reasonably sparse in the evolving basis of orthogonal polynomials, as was done earlier in \cite{GSVK10}.

Following \cite{BM13}, we applied DgPC to the long-time numerical simulation of various stochastic differential equations (SDEs). To demonstrate the ability of the algorithm to reach long-time solutions, we computed invariant measures for SDEs that admit one and found a very good agreement between DgPC and other standard methods such as Monte Carlo-based simulations. We also presented a simple theoretical justification for such a convergence. 

The main computational difficulty in DgPC, as in most gPC-based methods, is the estimation of the orthogonal polynomials of evolving arbitrary measures. Our method, as in \cite{GSVK10}, is based on estimating moments of the multivariate distribution of interest and estimating orthogonal polynomials by a Gram--Schmidt procedure. The bottleneck in such estimations is the cost of computing moments. This is similar to the cost of solving differential equations by PCE methods, where moment estimations are also the most costly step. However, in DgPC, such estimations cannot be performed offline.

From a theoretical point of view, we need to ensure that the evolving measures remain {\em determinate} so that their orthogonal polynomials span square integrable functionals.  Since distributions with compact support are determinate, our theoretical results have been applied in the setting where SDE solutions are well approximated by compactly supported distributions. In this connection, we note that the calculation of orthogonal polynomials from knowledge of moments is an ill-posed problem, which is a serious impediment to gPC methods in general.

Extension of the method to larger systems than those considered here, including to systems obtained by solving stochastic partial differential equations (SPDEs) is challenging. The method works reasonably well for low dimensional SDEs and it needs modifications for larger systems. However, the general idea remains applicable. To keep the number of uncertain variables in check, the method has to be coupled with a compression technique such as Karhunen--Loeve projection at each restart time. Moreover, sparse truncation techniques further help to reduce costs. The analysis of extensions of DgPC to larger systems, faster methods to compute orthogonal polynomials, and adaptive restart procedures is the subject of ongoing research.

\section*{Acknowledgement}
This work was partially funded by AFOSR Grant NSSEFF- FA9550-10-1-0194 and NSF Grant DMS-1408867.

%

\bibliographystyle{plain}

\end{document}